\documentclass[10pt]{amsart}

 \usepackage{hyperref}

\hypersetup{colorlinks=true, urlcolor=blue, citecolor=blue, linkcolor=blue}

\usepackage{amsmath,amsfonts, latexsym,graphicx, footmisc, amssymb}

%\makeindex

\newcommand{\gecc}{{\operatorname{gecc}}}
\newcommand{\U}{{\mathcal U}}
\newcommand{\0}{{\mathbf 0}}
\newcommand{\C}{{\mathbb C}}
\newcommand{\Z}{{\mathbb Z}}

\newcommand{\Q}{{\mathbb Q}}
\newcommand{\N}{{\mathbb N}}
\newcommand{\cL}{{\mathbb L}}

\newcommand{\W}{{\mathcal W}}

\newcommand{\strat}{{\mathfrak S}}
\newcommand{\Proj}{{\mathbb P}}
\newcommand{\hyp}{{\mathbb H}}
\newcommand{\supp}{\operatorname{supp}}

\newcommand{\im}{\mathop{\rm im}\nolimits}
\newcommand{\rank}{\mathop{\rm rank}\nolimits}
\newcommand{\arrow}[1]{\stackrel{#1}{\longrightarrow}}
\newcommand{\Adot}{\mathbf A^\bullet}
\newcommand{\Bdot}{\mathbf B^\bullet}
\newcommand{\Cdot}{\mathbf C^\bullet}
\newcommand{\Idot}{\mathbf I^\bullet}
\newcommand{\Fdot}{\mathbf F^\bullet}

\newcommand{\Pdot}{\mathbf P^\bullet}
\newcommand{\Qdot}{\mathbf Q^\bullet}
\newcommand{\Udot}{\mathbf U^\bullet}
\newcommand{\Kdot}{\mathbf K^\bullet}

\newcommand{\mfl}{{\mathfrak l}}
\newcommand{\p}{{\mathbf p}}
\newcommand{\cc}{{\operatorname{CC}}}
\newcommand{\ms}{{\operatorname{SS}}}
\newcommand{\mfm}{{\mathfrak m}}
\newcommand{\vdual}{{\mathcal D}}
\newcommand{\call}{{\mathcal L}}
\newcommand{\imdf}{{{\operatorname{im}}d\tilde f}}

\newcommand{\lotimes}{\ {\overset{L}{\otimes}}\ }
\newcommand{\lboxtimes}{\ {\overset{L}{\boxtimes}\ }}
\newcommand{\piten}{{\pi_1^*\Adot\lotimes\pi_2^*\Bdot}}

\newtheorem{defn0}{Definition}[section]
\newtheorem{prop0}[defn0]{Proposition}
\newtheorem{conj0}[defn0]{Conjecture}
\newtheorem{thm0}[defn0]{Theorem}
\newtheorem{lem0}[defn0]{Lemma}
\newtheorem{corollary0}[defn0]{Corollary}
\newtheorem{example0}[defn0]{Example}
\newtheorem{remark0}[defn0]{Remark}
\newtheorem{question0}[defn0]{Question}

\newenvironment{defn}{\begin{defn0}}{\end{defn0}}
\newenvironment{prop}{\begin{prop0}}{\end{prop0}}

\newenvironment{thm}{\begin{thm0}}{\end{thm0}}

\newenvironment{cor}{\begin{corollary0}}{\end{corollary0}}
\newenvironment{exm}{\begin{example0}\rm}{\end{example0}}
\newenvironment{rem}{\begin{remark0}\rm}{\end{remark0}}

\newcommand{\defref}[1]{Definition~\ref{#1}}
\newcommand{\propref}[1]{Proposition~\ref{#1}}
\newcommand{\thmref}[1]{Theorem~\ref{#1}}

\newcommand{\corref}[1]{Corollary~\ref{#1}}
\newcommand{\exref}[1]{Example~\ref{#1}}
\newcommand{\secref}[1]{Section~\ref{#1}}
\newcommand{\remref}[1]{Remark~\ref{#1}}

\newcommand{\equref}[1]{Formula~\ref{#1}}

\title[Calculations with Characteristic Cycles]{Calculations with Characteristic Cycles}

\subjclass[2010]{32B15, 32C35, 32C18, 32B10}
\keywords{characteristic cycle, constructible complexes, nearby cycles, vanishing cycles}

\author{David B. Massey}

\date{}

\begin{document}

\begin{abstract} There are many constructible complexes of sheaves of $\Z$-modules which arise in the study of the topology of complex hypersurfaces. But these complexes are difficult to calculate in any effective manner. We focus instead on a form of characteristic cycles for complexes of sheaves: a graded, enriched characteristic cycle, in which we replace integer coefficients with Morse modules of coefficients. This allows us to preserve much more data than a standard characteristic cycle, while at the same time allowing us to calculate in algebraic/geometric terms. 
\end{abstract}

\maketitle

%\newpage

%\tableofcontents

%\newpage

\section{Introduction} 

Throughout this paper, we must assume that the reader is familiar with basic aspects of the derived category of bounded constructible complexes of sheaves, perverse sheaves, and the nearby and vanishing cycles. Good references for the theory are \cite{kashsch}, \cite{dimcasheaves}, and \cite{schurbook}.

\medskip

Suppose that $\U$ is a connected open subset of the origin in $\C^{n+1}$ and, for convenience, assume that $\0\in\U$.  Let $f:(\U, \0))\rightarrow(\C,0)$ be a complex analytic function, which is not identically zero, and consider the hypersurface $V(f)=f^{-1}(0)$. 

There are several spaces which are typically studied when investigating the local, embedded topology of $V(f)$ at $\0$: the complement $V(f)$ in a small ball around $\0$, the real link of $V(f)$ at $\0$, the complex link of $V(f)$ at $\0$, and the Milnor fiber of $f$ at $\0$.

If $\0$ is a non-isolated critical point of $f$, then it is important to look at each of these spaces at every critical point, and to know how the topology of $V(f)$ at these nearby critical points is related to the topology at $\0$. This is a question of how local data patches together to give global data. Thus, complexes of sheaves of $\Z$-modules naturally enter the picture. And so, it is beneficial to look at the constant sheaf on the complement of $V(f)$, the Verdier dual of the constant sheaf on $V(f)$, the vanishing cycles of the constant sheaf on $V(f)$ along a generic linear form, and the nearby and vanishing cycles of the constant sheaf on $\U$ along $f$.

But complexes of sheaves contain so much data that they are are not amenable to computation. Hence, researchers consider other objects, which do not contain as much data, but which can be calculated algebraically/geometrically. The characteristic cycle and micro-support of a complex of sheaves are prominent examples of two such objects. We will define these carefully later, but we wish to describe them briefly here.

Let $X$ be a complex analytic space inside $\U$. Let $\strat$ be a complex analytic Whitney stratification of $X$, with connected strata. We let $\overline{T^*_S\U}$ denote the closure of the conormal variety of $S$ in $\U$, that is, the closure of the set of $(\mathbf p, \eta)\in T^*\U$ such that $\mathbf p\in S$ and $\eta(T_{\mathbf p}S)\equiv 0$.

Now, let $\Fdot$ be a bounded complex of sheaves of $\Z$-modules, which is constructible with respect to $\strat$. Then, as described by Goresky and MacPherson \cite{stratmorse}, to each stratum $S$ in $\strat$, there are an associated normal slice $\N_S$ and complex link $\mathbb L_S$. The isomorphism-types of the hypercohomology modules $\hyp^*(\N_S, \mathbb L_S; \Fdot)$ are independent of the choices made in defining the normal slice and complex link; these are the {\it Morse modules} of $S$, with respect to $\Fdot$. We let $d_S:=\dim S$, and $m_S^k(\Fdot):=\hyp^{k-d_S}(\N_S, \cL_S; \Fdot)$.

The Morse modules $m_S^k(\Fdot)$ tell one how the cohomology of $\Fdot$ changes as one moves through the stratum $S$. We say that a stratum $S$ is {\it $\Fdot$-visible} provided that $m^*_S(\Fdot)\neq 0$, i.e., provided that there exists $k$ such that $m^k_S(\Fdot)\neq 0$. The $\Fdot$-invisible strata, the strata which are {\bf not} $\Fdot$-visible, are, in a sense, strata that could be omitted from the stratification as far as the cohomology of $\Fdot$ is concerned.

The union of the closures of conormal varieties to $\Fdot$-visible strata is the {\it microsupport}, $\ms(\Fdot)$, of $\Fdot$, as defined by Kashiwara and Schapira in \cite{kashsch}, i.e.,   
$$\ms(\Fdot):= \bigcup_{m^*_S(\Fdot)\neq 0}\overline{T^*_S\U}.
$$
The microsupport is microlocal data which encodes the directions in which $\Fdot$ changes at each point.

Let $c_S(\Fdot)$ be the Euler characteristic of the Morse modules, i.e., let  $c_S(\Fdot):=\sum_{k\in\Z} (-1)^{k}\rank(m^k_S(\Fdot))$. The {\it characteristic cycle of $\Fdot$} is the cycle 
$$
\cc(\Fdot)=\sum_{S\in\strat}c_S(\Fdot)\left[\overline{T^*_S\U}\right],
$$
where the square brackets indicate that we are considering the conormal varieties as cycles. Both the microsupport and the characteristic cycle are independent of the stratification $\strat$, and so are intrinsic to the complex $\Fdot$ (and the ambient space $\U$).

\medskip

But the microsupport and the characteristic cycle throw away a large amount of the Morse module data. Why not take the conormal varieties and, instead of weighting them with Euler characteristic data from the Morse modules, consider a formal sum in which the coefficients are the Morse modules themselves? 

Thus, we define $\gecc^\bullet(\Fdot)$, the {\it graded, enriched characteristic cycle of $\Fdot$}, by defined a ``cycle'' in degree $k$ as a formal sum of modules times conormal varieties:
$$
\gecc^k(\Fdot) :=\sum_{S\in\strat} m_S^k(\Fdot)\big[\,\overline{T^*_{{}_{S}}\U}\,\big] =  \sum_{S\in\strat} H^{k-d_S}(\mathbb N_S,\mathbb L_S; \Fdot)\big[\,\overline{T^*_{{}_{S}}\U}\,\big],
$$
and we use an intersection theory which is a very mild extension of the theory of proper intersection of cycles ({\bf not} cycle classes) inside a complex manifold, as described in \cite{fulton}.

\medskip

There have been numerous other works on the computations of characteristic cycles: notably, the papers of Ginsburg \cite{ginsburg}, Briancon, Maisonobe and Merle \cite{bmm}, and Parusi\'nski and Pragacz \cite{paruprag}, plus portions of the books of Kashiwara and Schapira \cite{kashsch} and of Sch\"urmann \cite{schurbook}. However, there are several advantages to the techniques and results presented here.

\smallskip

\begin{itemize}

\item The intersection theory that we use is that of properly intersecting cycles inside a complex manifold. For such intersections, there are well-defined intersections cycles, not cycle {\bf classes} (see \cite{fulton}, Chapter 8).

The fact that we have intersection cycles with fixed underlying analytic sets makes calculations and formulas much easier and algorithmic, and, typically, the amount of genericity that we need in statements is merely that the intersections are proper, which is a relatively simple thing to check.

It is an interesting aspect of the theory that, using only enough genericity to obtain proper intersections does not yield objects which are as generic as possible, and it is precisely this lack of really generic genericity that makes formulas work so well.

\smallskip

\item While we use an easy intersection theory, we use modules in various degrees for the coefficients of our cycles. This graded, enriched intersection theory adds essentially no difficulty to computations, and yet, almost magically, yields results on the levels of modules, instead of merely giving numerical results.

\smallskip

\item In addition to the notion of graded, enriched characteristic cycles, our primary new device involved in the calculus of $\gecc$'s is the {\it graded, enriched relative polar curve} (see \cite{enrichpolar} and \secref{sec:relpolarcurve} of this paper). This is a substantial generalization the now-classic relative polar curve introduced by Hamm, L\^e, and Teissier in 1973 in  \cite{hammlezariski}, \cite{teissiercargese}, \cite{leattach}, and \cite{letopuse}.

By giving the ``correct'' definition of the general polar curve, we are not required to make choices as generically as did Hamm, L\^e, and Teissier and, thus, once again, the genericity hypotheses that we need in theorems are simply that certain intersections are proper.

\smallskip

\item Our calculation of the $\gecc$ of the vanishing cycles uses a generalization of the L\^e cycle algorithm that we developed in \cite{lecycles}, and so really does allow for explicit calculations in many examples.

\end{itemize}

\medskip

\noindent Aside from using graded enriched cycles and a more general relative polar curve, what is new in this paper?

\medskip

\begin{itemize}

\item In Section 3,  we define a generalization of the relative local Euler obstruction, $\operatorname{Eu}_{\mathbf p}f$, as was introduced in \cite{bmps}. We then prove a number of fundamental properties which hold for $\operatorname{Eu}_{\mathbf p}f$.

\medskip

\item In \thmref{thm:psigecc} and \thmref{thm:vanpsi} of Section 6, we recall our previous formulas involving $\gecc^\bullet(\psi_f[-1]\Fdot)$. However, the example that we calculate is new, as is the easy -- but interesting -- corollary, \corref{cor:numnearnum}, that the shifted nearby cycles of numerical complexes (see \defref{def:numerical}) are numerical.

\medskip

\item In Section 7, \thmref{thm:vanrestrext} is an enriched cycle version of one of our earlier results, which relates to the calculation of the graded enriched characteristic cycle of the complement of a hypersurface. However, \corref{cor:morserestrext}, \corref{cor:restrextgeccset}, \thmref{thm:vanrestrpush}, and \corref{cor:numcompnum} are new. Furthermore, as we show in \exref{exm:genlink}, \corref{cor:hyplink} is a  new generalization of the classically-known formula for the number of spheres in the homotopy-type of the complex link of an affine hypersurface.

\medskip

\item In Section 8, we recall our earlier result, \thmref{thm:vangecc}, and then show, in \corref{cor:vannum}, that the shifted vanishing cycles of numerical complexes are numerical. \thmref{thm:sskphi} is new, but follows quickly from some of our previous results. In \remref{rem:lecyclerem}, we discuss a general method for calculating the graded enriched characteristic of the vanishing cycles; this uses a cycle approach which is a generalization of our method for calculating L\^e cycles (see \cite{lecycles}). In \exref{exm:vangecc}, we give an example of how the method of \remref{rem:lecyclerem} actually works in practice.

\end{itemize}

\bigskip

We reiterate that, throughout this paper, it is important that, when we state that a choice must be made ``generically'', we actually give effective means of checking that the choice is generic enough. This makes the results much more useful when applying them to specific examples, and we give sample calculations to illustrate this point.

\section[The Characteristic Cycle]{The Characteristic Cycle and the Graded, Enriched Characteristic Cycle}\label{sec:gecc}

Throughout this paper, we fix a base ring
$R$ that is a regular, Noetherian ring with finite Krull dimension (e.g., $\Z$,
$\Q$, or  $\C$). This implies that every finitely-generated $R$-module has finite projective dimension (in fact, it
implies that the projective dimension of the module is at most $\dim R$). 

We let $\U$ be an open neighborhood of the origin of $\C^{n+1}$, and let $X$ be a closed, analytic subset of $\U$. We let $\mathbf z:=(z_0, \dots, z_n)$ be coordinates on $\U$. Having fixed the coordinates, we identify the cotangent space $T^*\U$ with $\U\times\C^{n+1}$ by mapping $(\p, w_0d_{\p}z_0+\dots+w_nd_{\p}z_n)$ to $(\p, (w_0,\dots, w_n))$. Let $\pi:T^*\U\rightarrow\U$ denote the projection.  

Let $\strat$ be a complex analytic Whitney stratification of $X$, with connected strata. Let $\Fdot$ be a bounded complex of sheaves of $R$-modules on $X$, which is constructible with respect to $\strat$. For each $S\in\strat$, we let $d_S:=\dim S$, and let $(\N_S, \cL_S)$ denote {\it complex Morse data for $S$ in $X$}, consisting of a normal slice and complex link of $S$ in $X$; see, for instance, \cite{stratmorse} or \cite{numcontrol}.

\smallskip

A general reference for the remainder of this section is \cite{singenrich}. 

\begin{defn} For each $S\in\strat$ and each integer $k$, the isomorphism-type of the module
$m_S^k(\Fdot):=\hyp^{k-d_S}(\N_S, \cL_S; \Fdot)$ is independent of the choice of $(\N_S, \cL_S)$; we
refer to
$m_S^k(\Fdot)$ as the {\it degree $k$ Morse module of $S$ with respect to
$\Fdot$}.
\end{defn}

\smallskip

\begin{rem}\label{rem:morsemod} The shift by $d_S$ above is present so that perverse sheaves can have non-zero Morse modules in only degree $0$. 

We also remark that, up to isomorphism, $m_S^k(\Fdot)$ can be obtained in terms of vanishing cycles. To accomplish this, select any point $\p\in S$. Consider an analytic function $\tilde g:(\U^\prime, \p)\rightarrow (\C,0)$  on some open neighborhood of $\p$ in $\U$ such that $d_\p \tilde g$ is a nondegenerate covector (in the sense of \cite{stratmorse}), and such that $\p$ is a (complex) nondegenerate critical point of $\tilde g_{|_{\U^\prime\cap S}}$. Let $g:=\tilde g_{|_{\U^\prime\cap X}}$. Then, $m_S^k(\Fdot)$ is isomorphic to the stalk cohomology $H^k(\phi_g[-1]\Fdot)_{\p}$. 

 Note that, if $\0$ is a point-stratum, then $m^k_\0(\Fdot)\cong H^k(\phi_{\call}[-1]\Fdot)_\0$, where $\call$ is the restriction to $X$ of a generic linear form $\tilde\call$.

\end{rem}

\medskip

For any analytic submanifold $M\subseteq\U$, we denote the conormal space $$\{(\p, \omega)\in T^*\U\ |\ \omega(T_{\p}M)\equiv 0\}$$ by $T^*_M\U$, and will typically be interested in its closure $\overline{T^*_M\U}$ in $T^*\U$.

\begin{defn}
Suppose that $R$ is an integral domain.

Define $c_S(\Fdot):=\sum_{k\in\Z} (-1)^{k}\rank(m^k_S(\Fdot))$, and define the {\bf characteristic cycle of $\Fdot$ (in $T^*\U$)} to be the analytic cycle
$$\cc(\Fdot)=\sum_{S\in\strat}c_S(\Fdot)\left[\overline{T^*_S\U}\right].
$$
We write $c_\0(\Fdot)$ in place of $c_{\{\0\}}(\Fdot)$, and let $c_\0(\Fdot)=0$ if $\{\0\}\not\in\strat$.

\medskip

The underlying set $\left|\cc(\Fdot)\right|=\bigcup_{c_S(\Fdot)\neq 0}\overline{T^*_S\U}$ is the {\bf characteristic variety of $\Fdot$ (in $T^*\U$)}.

\end{defn}

\smallskip

Throughout this paper, whenever we refer to $c_S(\Fdot)$ or $\cc(\Fdot)$, we assume that the base ring is an integral domain, even if we do not explicitly state this.

\smallskip

\begin{rem}  We should remark that there are various conventions for the signs involved in the characteristic cycle. In fact, our definition above uses a different convention than we used in our earlier works. Our definition above is the most desirable considering the graded, enriched characteristic cycle that we will define below. In hopes of avoiding confusion with our earlier work, we have also changed our notation for the characteristic cycle.

Note that, using the above convention, the characteristic cycle is not changed by extending $\Fdot$ by zero to all of $\U$.
\end{rem}

\medskip

We give some basic, easy properties of the characteristic cycle concern how they work with shifting, constant sheaves, distinguished triangles, and the Verdier dual $\vdual\Fdot$. The proofs are all trivial, and we leave them to the reader.

\vbox{\begin{prop}\label{prop:basicccprops}

\ 

\begin{enumerate}
\item $\cc(\Fdot[j])=(-1)^j\cc(\Fdot)$.

\smallskip

\item If $X$ is a pure-dimensional (e.g., connected) complex manifold, then 
$$\cc(\mathbf R^\bullet_X) = (-1)^{\dim X}[T^*_X\U],$$
 i.e., $\cc(\mathbf R^\bullet_X[\dim X]) =[T^*_X\U]$.

\smallskip

\item If $\Adot\rightarrow\Bdot\rightarrow\Cdot\arrow{[1]}\Adot$ is a distinguished triangle in $D^b_c(X)$, then $\cc(\Bdot)=\cc(\Adot)+\cc(\Cdot)$.

\smallskip

\item $\cc(\Fdot) = \cc(\vdual\Fdot)$.
\end{enumerate}
\end{prop}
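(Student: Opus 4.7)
The plan is to reduce each claim to a statement about the stratum coefficients $c_S$, since the pieces $\overline{T^*_S\U}$ are intrinsic to the Whitney stratification and independent of the sheaf; for items (3) and (4) one uses a common refinement on which all of the sheaves involved are constructible, noting that $\vdual$ preserves $\strat$-constructibility.

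Items (1) and (3) are essentially formal. The identity $m_S^k(\Fdot[j]) = \hyp^{k-d_S}(\N_S,\cL_S;\Fdot[j]) = m_S^{k+j}(\Fdot)$ re-indexes the alternating sum to give $c_S(\Fdot[j]) = (-1)^j c_S(\Fdot)$. For (3), a distinguished triangle yields a long exact sequence of relative hypercohomology on each pair $(\N_S,\cL_S)$, and since rank is additive on short exact sequences of finitely generated modules over an integral domain, one obtains $c_S(\Bdot) = c_S(\Adot) + c_S(\Cdot)$ stratum by stratum.

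For item (2), I would take the Whitney stratification consisting of the single open stratum $X$, so only the term involving $T^*_X\U$ appears. For $S = X$ inside itself, the normal slice is a point and the complex link is empty, giving $m_X^k(R^\bullet_X) = H^{k-\dim X}(\mathrm{pt};R)$, which is $R$ when $k = \dim X$ and $0$ otherwise. Hence $c_X(R^\bullet_X) = (-1)^{\dim X}$; the second displayed equality then follows from (1) applied to the shift by $\dim X$.

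Item (4) is the main obstacle. Here I would use the vanishing-cycle description $m_S^k(\Fdot) \cong H^k(\phi_g[-1]\Fdot)_{\p}$ from \remref{rem:morsemod}, taken at a stratified Morse point $\p \in S$, combined with the standard compatibility $\phi_g[-1]\vdual\Fdot \cong \vdual\phi_g[-1]\Fdot$. Because $\phi_g[-1]\Fdot$ is supported at the isolated stratified Morse critical point $\p$, its derived $R$-dual at $\p$ is computed by a universal-coefficient argument, and over the integral domain $R$ every $\operatorname{Ext}^{\geq 1}_R(-,R)$ contribution is torsion and thus contributes $0$ to rank. This yields $\rank m_S^k(\vdual\Fdot) = \rank m_S^{-k}(\Fdot)$, so the substitution $k \mapsto -k$ preserves the alternating sum $\sum_k (-1)^k \rank m_S^k$, giving $c_S(\vdual\Fdot) = c_S(\Fdot)$. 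The delicate points are the commutation of vanishing cycles with Verdier duality up to the chosen $[-1]$ shift, and making precise that higher $\operatorname{Ext}$ terms drop out of rank; both are standard and can be cited from \cite{kashsch} or \cite{singenrich}.
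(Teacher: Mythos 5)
The paper states \propref{prop:basicccprops} without proof, presenting the four assertions as ``basic easy properties,'' so there is no in-paper argument to compare against directly. Your proof is correct, and your strategy for item~(4) is in fact the same one the paper uses when it \emph{does} prove the closely related \propref{prop:basicgecc}(3): pass to the vanishing-cycle description $m_S^k(\Fdot)\cong H^k(\phi_g[-1]\Fdot)_\p$ at a stratified Morse point, commute $\phi_g[-1]$ with $\vdual$, and then dualize the resulting complex supported at $\p$.

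Two small remarks. First, for item~(4) you claim the stronger degree-by-degree statement $\rank m_S^k(\vdual\Fdot)=\rank m_S^{-k}(\Fdot)$. Over an arbitrary regular Noetherian integral domain this needs the full universal-coefficient spectral sequence $E_2^{p,q}=\operatorname{Ext}^p_R(H^{-q},R)\Rightarrow H^{p+q}(R\mathrm{Hom}(A^\bullet,R))$ rather than a two-term short exact sequence; your observation that $\operatorname{Ext}^{\geq1}_R(-,R)$ is torsion then kills all columns $p\geq1$ in rank, and the $p=0$ column suffers no incoming differentials, so the claim does hold. But you only need the weaker equality of alternating sums, and that has a cleaner one-line proof: since $R$ is regular Noetherian of finite Krull dimension, the stalk $A^\bullet=(\phi_g[-1]\Fdot)_\p$ is quasi-isomorphic to a bounded complex of finitely generated free modules $P^\bullet$, and $\mathrm{Hom}^\bullet(P^\bullet,R)$ in degree $n$ is $\mathrm{Hom}(P^{-n},R)$, which is free of the same rank as $P^{-n}$. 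Hence $\chi(R\mathrm{Hom}(A^\bullet,R))=\chi(A^\bullet)$ with no spectral sequence needed, and this is precisely $c_S(\vdual\Fdot)=c_S(\Fdot)$. Second, in item~(3), ``rank is additive on short exact sequences'' is the right engine, but you should say explicitly that the bounded long exact sequence of Morse modules breaks into short exact sequences whose alternating rank sum telescopes to zero; as written the step is implicit.
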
}

For calculating the characteristic cycle of the constant sheaf, the following is very useful:

\begin{cor} Suppose that $Y$ and $Z$ are closed analytic subsets of $X$ such that $X=Y\cup Z$. Then, 
$$\cc(\mathbf R^\bullet_X)=\cc(\mathbf R^\bullet_Y)+\cc(\mathbf R^\bullet_Z)-\cc(\mathbf R^\bullet_{Y\cap Z}).
$$
\end{cor}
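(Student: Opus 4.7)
\begin{proofn}
The plan is to deduce this from the Mayer--Vietoris short exact sequence of constant sheaves associated with the covering $X = Y \cup Z$ by closed analytic subsets, and then apply the additivity of $\cc$ on distinguished triangles (\propref{prop:basicccprops}(3)).

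First, I would verify, by a pointwise stalk computation, that the standard Mayer--Vietoris sequence
\[
0 \longrightarrow R^\bullet_X \longrightarrow R^\bullet_Y \oplus R^\bullet_Z \longrightarrow R^\bullet_{Y\cap Z} \longrightarrow 0
\]
is short exact in the category of sheaves of $R$-modules on $\U$ (all summands being extended by zero to $\U$), where the first map sends $a \mapsto (a,a)$ and the second sends $(a,b) \mapsto a-b$. At a point of $Y\cap Z$ this is the exact sequence $R \to R\oplus R \to R$; at a point of $Y\setminus Z$ (or $Z\setminus Y$) it degenerates to $R \to R \to 0$; outside $X$ all stalks vanish. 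After choosing a Whitney stratification of $X$ compatible with $Y$ and $Z$ (which does not affect any of the characteristic cycles involved), the whole sequence lives in $D^b_c(X)$, and I would view it as a distinguished triangle
\[
R^\bullet_X \longrightarrow R^\bullet_Y \oplus R^\bullet_Z \longrightarrow R^\bullet_{Y\cap Z} \arrow{[1]} R^\bullet_X.
\]

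Next, I would apply \propref{prop:basicccprops}(3) to this triangle to obtain
\[
\cc\!\left(R^\bullet_Y \oplus R^\bullet_Z\right) \;=\; \cc(R^\bullet_X) + \cc(R^\bullet_{Y\cap Z}).
\]
Finally, I would observe that $\cc$ is additive on direct sums: applied to the split triangle $R^\bullet_Y \to R^\bullet_Y \oplus R^\bullet_Z \to R^\bullet_Z \arrow{[1]}$, part (3) of the proposition yields $\cc(R^\bullet_Y \oplus R^\bullet_Z) = \cc(R^\bullet_Y) + \cc(R^\bullet_Z)$. Substituting and rearranging produces the claimed identity.

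I do not expect any serious obstacle. The only mildly delicate point is ensuring that all four sheaves are simultaneously constructible with respect to a single Whitney stratification, so that they genuinely lie in $D^b_c(X)$ and \propref{prop:basicccprops} applies; this is handled by refining $\strat$ so that $Y$, $Z$, and $Y\cap Z$ are unions of strata, and by recalling (as noted in the remark after the definition of $\cc$) that extension by zero to $\U$ leaves $\cc$ unchanged.
\end{proofn}
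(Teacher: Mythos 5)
Your proof is correct and takes essentially the same route as the paper: both invoke the Mayer--Vietoris distinguished triangle $R^\bullet_X\rightarrow R^\bullet_Y\oplus R^\bullet_Z\rightarrow R^\bullet_{Y\cap Z}\arrow{[1]}$ and then apply the additivity of $\cc$ over distinguished triangles (item 3 of \propref{prop:basicccprops}, not item 2 as the paper's proof mistakenly cites), together with the observations that pullback of the constant sheaf is constant and that $\cc$ is unaffected by extension by zero. Your explicit stalkwise check of exactness and your separate split-triangle argument for $\cc(R^\bullet_Y\oplus R^\bullet_Z)=\cc(R^\bullet_Y)+\cc(R^\bullet_Z)$ are just a more spelled-out version of the same argument.
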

\begin{proof} Let $j:Y\hookrightarrow X$, $k:Z\hookrightarrow X$, and $l:Y\cap Z\hookrightarrow X$  denote the respective inclusions. Then, there is a canonical distinguished triangle
$$
\mathbf R^\bullet_X\rightarrow j_*j^*\mathbf R^\bullet_X\oplus k_*k^*\mathbf R^\bullet_X\rightarrow l_*l^*\mathbf R^\bullet_X\arrow{[1]}\mathbf R^\bullet_X.
$$
As the pull-back of the constant sheaf is the constant sheaf, and as the characteristic cycle is unaffected by extensions by zero, the desired conclusion follows immediately from Item 3 of \propref{prop:basicccprops}.
\end{proof}

\medskip

We also have the following result.

\begin{prop} Suppose that $\Adot$ and $\Bdot$ are bounded, constructible complexes of sheaves on the $d$-dimensional analytic space $X$. Suppose that $\strat$ is a stratification with respect to which both $\Adot$ and $\Bdot$ are constructible (which always exists). 

For $0\leq k\leq d$, let $\cc_{\geq k}(\Adot)$ and $\cc_{\geq k}(\Bdot)$ denote the portions of the characteristic cycles which lie over closures of strata of dimension greater than or equal to $k$.

Then, $\cc_{\geq k}(\Adot)=\cc_{\geq k}(\Bdot)$  if and only if, for all $S\in \strat$ such that $\dim S\geq k$, for all $\mathbf p\in S$, there is an equality of Euler characteristics of the stalk cohomology $\chi(\Adot)_{\mathbf p}=\chi(\Bdot)_{\mathbf p}$.

In particular, $\cc(\Adot)=\cc(\Bdot)$  if and only if, for all $\mathbf p\in X$, $\chi(\Adot)_{\mathbf p}=\chi(\Bdot)_{\mathbf p}$.
\end{prop}
\begin{proof} The proof is by downward induction on $k$. Certainly the result is trivial for $k=d$. Now suppose that $k_0\geq 0$ and that the statement is true for all $k$ such that $k_0+1\leq k\leq d$; we wish to show that the statement is true for $k=k_0$.

Let $S_0\in\strat$ be a stratum of dimension $k_0$, and let $\mathbf p_0\in S_0$. For each stratum $S$ of dimension greater than or equal to $k_0+1$, let $\p_S$ denote a point of $S$. If we let be $\Adot$ or $\Bdot$, then
$$c_{S_0}(\Fdot):=\chi(\N_{S_0}, \cL_{S_0}; \Fdot[-k_0])= \chi(\N_{S_0}; \Fdot[-k_0])-\chi( \cL_{S_0}; \Fdot[-k_0])=$$
$$
(-1)^{k_0}\Big\{\chi(\Fdot)_{\p_0} -\sum_{S, \dim S\geq k_0+1}\chi\big(\cL_{S_0}\cap S\big)\cdot\chi(\Fdot)_{\p_S} \Big\}.
$$
Note that our inductive hypothesis implies that the summation on the right above is the same whether $\Fdot$ equals $\Adot$ or $\Bdot$.

Therefore, $c_{S_0}(\Adot)= c_{S_0}(\Bdot)$ if and only if $\chi(\Adot)_{\p_0}=\chi(\Bdot)_{\p_0}$, and we are finished.
\end{proof}

\smallskip

\begin{cor}\label{cor:ccvan} Suppose that $\cc(\Adot)=\cc(\Bdot)$ and that we have a complex analytic $f:X\rightarrow\C$. Then, for all, $\mathbf p\in X$,
$$
\chi(\psi_{f-f(\mathbf(p)}\Adot)_{\mathbf p}=\chi(\psi_{f-f(\mathbf(p)}\Bdot)_{\mathbf p}\hskip 0.2in\textnormal{and}\hskip 0.2in \chi(\phi_{f-f(\mathbf(p)}\Adot)_{\mathbf p}=\chi(\phi_{f-f(\mathbf(p)}\Bdot)_{\mathbf p}.
$$
\end{cor}
\begin{proof} For convenience, we shall assume that $f(\p)=0$. Let $F_{f, \p}$ denote the Milnor fiber of $f$ at $\p$. Once again, choose a Whitney stratification $\strat$ with respect to which both $\Adot$ and $\Bdot$ are constructible and, for each $S\in\strat$, select a $\p_S\in S$.

Then,
$$
\chi(\psi_f\Adot)_{\mathbf p}=\chi(F_{f, \p}; \Adot)=\sum_{S\in\strat}\chi(F_{f,\p}\cap S)\cdot\chi(\Adot)_{\p_S}.
$$
By the proposition, this also equals $\chi(\psi_f\Bdot)_{\mathbf p}$.

The result about the vanishing cycles follows immediately since
$$
 \chi(\phi_f\Adot)_{\mathbf p}= \chi(\psi_f\Adot)_{\mathbf p}-\chi(\Adot)_\p = \chi(\psi_f\Bdot)_{\mathbf p}-\chi(\Bdot)_\p= \chi(\phi_f\Bdot)_{\mathbf p}.
$$
\end{proof}

\vskip 0.3in

The characteristic cycle uses only the Euler characteristic information from the Morse data to strata. While this makes many calculations far easier, it disposes of a great deal of cohomological data. Hence, we define a formal graded  ``cycle'' with module coefficients (actually, isomorphism classes of modules); we shall discuss such ``enriched'' cycles more generally in \secref{sec:enrcycles}. 

\begin{defn}\label{def:gecc} The {\bf graded, enriched characteristic cycle of $\Fdot$ in the cotangent bundle $T^*\U$} is defined in degree to be
$k$ to be 
$$
\gecc^k(\Fdot) :=\sum_{S\in\strat} m_S^k(\Fdot)\big[\,\overline{T^*_{{}_{S}}\U}\,\big] =  \sum_{S\in\strat} H^{k-d_S}(\mathbb N_S,\mathbb L_S; \Fdot)\big[\,\overline{T^*_{{}_{S}}\U}\,\big].
$$
The underlying set $\left|\gecc^k(\Fdot)\right|$ is $\bigcup_{m^k_S(\Fdot)\neq 0}\overline{T^*_S\U}$. 

\medskip

The total underlying set $\left|\gecc^\bullet(\Fdot)\right|:=\bigcup_{m^*_S(\Fdot)\neq 0}\overline{T^*_S\U}$ is the {\bf microsupport of $\Fdot$ (in $T^*\U$)}, and is denoted by $\ms(\Fdot)$ \textnormal{(see \cite{kashsch})}.
\end{defn}

 \medskip

We have the following basic properties.

\begin{prop}\label{prop:basicgecc}
\ 

\begin{enumerate}
\item $\gecc^k(\Fdot[j])=\gecc^{k+j}(\Fdot)$.

\smallskip

\item $\supp\Fdot=\pi(\ms(\Fdot))$.

\smallskip

\item If $X$ is a pure-dimensional (e.g., connected) complex manifold, then 
$$\gecc^k(\mathbf R^\bullet_X[\dim X]) =
\begin{cases}
0, \textnormal{ if } k\neq 0;\\
 R[T^*_X\U], \textnormal{ if } k=0.
\end{cases}
$$

\smallskip

\item More generally, $\Fdot$ is a perverse sheaf if and only if $\gecc^\bullet(\Fdot)$ is concentrated in degree $0$, i.e., $\gecc^k(\Fdot) =0$ if $k\neq 0$. In particular, the characteristic cycle of a perverse sheaf has only non-negative coefficients.

\smallskip

\item Let ${}^{\mu}\hskip -.02in H^k$ denote the degree $k$ (middle perversity) perverse cohomology (see\cite{kashsch}, section 10.3). Then, ${}^{\mu}\hskip -.02in H^k(\Fdot)$ is a perverse sheaf, $m_S^k(\Fdot)\cong m_S^0({}^{\mu}\hskip -.02in H^k(\Fdot))$, and 
$$
\gecc^k(\Fdot)=\gecc^0\big({}^{\mu}\hskip -.02in H^k(\Fdot)\big).
$$

\smallskip

\item Suppose that $\Adot\rightarrow\Bdot\rightarrow\Cdot\arrow{[1]}\Adot$ is a distinguished triangle in $D^b_c(X)$. Then, for all $k$, $|\gecc^k(\Bdot)|\subseteq |\gecc^k(\Adot)|\cup|\gecc^k(\Cdot)|$ and, consequently, the microsupport of each complex is contained in the union of the microsupports of the other two.

\smallskip

\item If $R$ is Dedekind domain, then, for all $k$ and for all $S\in\strat$, 
$$m^{-k}_S(\vdual\Fdot)\cong \textnormal{Hom}(m^k_S(\Fdot), R)\oplus \textnormal{Ext}(m^{k+1}_S(\Fdot), R).$$
In particular, if $R$ is a field, then $\gecc^{-k}(\vdual\Fdot)=\gecc^k(\Fdot)$.

\smallskip

\item Suppose that $R$ is a principal ideal domain. Let $X$ and $Y$ be analytic spaces, let $\pi_1:X\times Y\rightarrow X$ and $\pi_2:X\rightarrow Y$ denote the projections. Let $\strat$ and $\strat'$ be Whitney stratifications of $X$ and $Y$, respectively. Let $\Adot$ and $\Bdot$ be bounded,  complexes of sheaves on $X$ and $Y$, respectively, which are constructible with respect to $\strat$ and $\strat'$, respectively. Let $\Adot\lboxtimes\Bdot:=\piten$.

Then, $\Adot\lboxtimes\Bdot$ is constructible with respect to the product stratification $\{S\times S'\ | \ S\in \strat, \ S'\in\strat'\}$ and, for all $S\in\strat$ and $S'\in \strat'$,

\medskip

$m_{S\times S'}^k\big(\Adot\lboxtimes\Bdot\big) =\hfill$

\smallskip

$\hfill\displaystyle\bigoplus_{i+j=k}m^i_S(\Adot)\otimes m^j_{S'}(\Bdot)  \ \oplus \ \bigoplus_{i+j=k+1}\operatorname{Tor}\big(m^i_S(\Adot), m^j_{S'}(\Bdot)\big)$.

\medskip

Consequently,
$$
c_{S\times S'}\big(\Adot\lboxtimes\Bdot\big)= c_S(\Adot)\cdot c_{S'}(\Bdot).
$$

\end{enumerate}
\end{prop}

\begin{proof} Items 1, 3, and 6 are trivial. Item 2 is the last equality of Proposition 2.5 in \cite{singenrich}. Item 8 follows immediately from formula 5.6 of \cite{schurbook}.

\medskip 

To see Item 4, note that the graded, enriched characteristic cycle of a complex being concentrated in degree zero is equivalent to the complex being {\bf pure with shift $0$} (see Definition 7.5.4 of \cite{kashsch}). This is equivalent to the complex being perverse (\cite{kashsch}, 9.5.2).

\medskip

\noindent Item 5:

We will use that $\phi_f[-1]$ naturally commutes with ${}^{\mu}\hskip -.02in H^k$ (\cite{kashsch}, Corollary 10.3.13) and that, if $\Adot$ has $\mathbf p$ as an isolated point in its support, then so does the perverse sheaf ${}^{\mu}\hskip -.02in H^k(\Adot)$; in this case, the stalk cohomology of ${}^{\mu}\hskip -.02in H^k(\Adot)$ at $\mathbf p$ is concentrated in degree $0$ and 
$$
H^0\left({}^{\mu}\hskip -.02in H^k(\Adot)\right)_{\mathbf p}\cong H^k(\Adot)_{\mathbf p}.
$$ 

Now, let $S\in\strat$. Let $\p\in S$. Let $g$ be as in \remref{rem:morsemod}, so that $\mathbf p$ is an isolated point in the support of $\phi_g[-1]\Fdot$ and, hence,
$$m_S^k(\Fdot)\cong H^k(\phi_g[-1]\Fdot)_{\p}\cong H^0\left({}^{\mu}\hskip -.02in H^k(\phi_g[-1]\Fdot)\right)_{\mathbf p}\cong $$
$$H^0\left(\phi_g[-1]{}^{\mu}\hskip -.02in H^k(\Fdot)\right)_{\mathbf p}\cong m_S^0({}^{\mu}\hskip -.02in H^k(\Fdot)).$$
\medskip

\noindent Item 7:

We begin as in the proof of Item 5. Let $S\in\strat$. Let $\p\in S$. Let $g$ be as in \remref{rem:morsemod}, so that 
$$m_S^k(\Fdot)\cong H^k(\phi_g[-1]\Fdot)_{\p},  \ \ m_S^{k+1}(\Fdot)\cong H^{k+1}(\phi_g[-1]\Fdot)_{\p},$$ and 
$$m_S^{-k}(\vdual\Fdot)\cong H^{-k}(\phi_g[-1]\vdual\Fdot)_{\p}\cong H^{-k}(\vdual\phi_g[-1]\Fdot)_{\p}.$$
As the support of $\phi_g[-1]\Fdot$ is contained in $\{\p\}$ and as $R$ is a Dedekind domain, there is a natural split exact sequence

\smallskip

$0 \rightarrow \textnormal{Ext}(H^{k+1}(\phi_g[-1]\Fdot)_\p , R) \rightarrow 
H^{-k}(\vdual\phi_g[-1]\Fdot)_\p \rightarrow\hfill$

\smallskip

$\hfill \textnormal{Hom}(H^k(\phi_g[-1]\Fdot)_\p, R) \rightarrow 0.$

Item 7 follows.

\end{proof}

\medskip

Note that, for a perverse sheaf $\Pdot$ with free Morse modules, $\gecc^\bullet(\Pdot)$ is completely determined by $\cc(\Pdot)$. This motivates us to define:

\begin{defn}\label{def:numerical} A complex of sheaves $\Pdot$ is {\bf numerical} if and only if $\Pdot$ is perverse with free Morse modules.
\end{defn}

\smallskip

\noindent\rule{1in}{1pt}

\bigskip

While most of our examples will have to wait until we have developed more machinery, we can calculate ``bare-handedly'' what happens for curves and some basic complexes of sheaves.

\begin{exm}\label{exm:curvegecc} Suppose that $X$ is a curve. The calculations of $\cc(\Fdot)$ and $\gecc^\bullet(\Fdot)$ reduce to calculating what happens at the discrete set of points where $X$ is singular or where $\Fdot$ is not locally constant. Thus, it suffices to analyze the situation where there is a single zero-dimensional stratum.

Hence, we shall assume that $\0\in X$, and that $X-\{\0\}$ is a smooth curve.  In a small enough open ball, each irreducible component of $X$ is homeomorphic to an open disk and, hence, corresponds to an irreducible component of the germ of $X$ at $\0$. Let $\{X_\mfl\}_{\mfl\in \Lambda}$ denote the collection of irreducible components of $X$ and, for each $X_\mfl$, let $\mfm_\mfl:={\operatorname{mult}}_\0 X_\mfl$. Let $\mfm:={\operatorname{mult}}_\0 X=\sum_{\mfl}\mfm_\mfl$. Let $e:=|\Lambda|$, i.e., let $e$ be the number of irreducible components of $X$.

Stratify $X$ by using $S_0:=\{\0\}$ and $S_\mfl:=X_\mfl-\{\0\}$ as strata (we are assuming that $0$ is not in the indexing set $\Lambda$). Let $j:\{\0\}\hookrightarrow X$ and $i:X-\{\0\}\hookrightarrow X$ denote the inclusions. Let $\Adot:=\Z^\bullet_X[1]$, $\Bdot:=i_!i^!\Adot$, $\Cdot:=i_*i^*\Adot$, and let $\Idot$ be the perverse sheaf given by intersection cohomology with constant $\Z$-coefficients (here, we use the shifts that put all of the possibly non-zero cohomology in non-positive degrees). These are all complexes of sheaves on $X$, which are the constant sheaf, shifted by $1$, on $X-\{\0\}$.

Let $\tilde f:(\U, \0)\rightarrow(\C, 0)$ be a complex analytic function, where $\tilde f$ may vanish identically on some irreducible components of $X$. Let $f:={\tilde f}_{|_{X}}$. Consider $\Pdot:=\psi_f[-1]\Adot$ and $\Qdot:=\phi_f[-1]\Adot$. These are complexes of sheaves on $V(f):=f^{-1}(0)$.

\medskip

We wish to calculate the graded, enriched characteristic cycle, and the ordinary characteristic cycle, for each of the six complexes given above.

\medskip

As $\Adot$, $\Bdot$, $\Cdot$, and $\Idot$ are the $1$-shifted constant sheaf on $X-\{\0\}$, it follows that, if $\Fdot$ is any of these four complexes, then, for all $\mfl\in \Lambda$, $m_{S_\mfl}^0(\Fdot) \cong \Z$, and $m_{S_\mfl}^k(\Fdot) =0$ for $k\neq 0$. The question is: what is $m_{S_0}^k(\Fdot)$?

A normal slice to $S_0$ is simply $\overset{\circ}{B_\epsilon}\cap X$, where $\overset{\circ}{B_\epsilon}$ is a small open ball around the origin. The complex link to $S_0$ is $\overset{\circ}{B_\epsilon}\cap X\cap L^{-1}(a)$, where $L$ is a generic linear form and $0<|a|\ll\epsilon$. 

\vskip .2in

\noindent $\Adot$:

We have
$$
m_{S_0}^k(\Adot)= \hyp^{k}(\overset{\circ}{B_\epsilon}\cap X,\overset{\circ}{B_\epsilon}\cap X\cap L^{-1}(a); \Z^\bullet_X[1]),
$$
which is the ordinary degree $k+1$ integral cohomology of the pair consisting of a contractible space modulo $\mfm$ points in the space. Hence, $m_{S_0}^0(\Adot)\cong \Z^{(\mfm-1)}$, and $m_{S_0}^k(\Adot)=0$ if $k\neq 0$. 

Thus, we find that, if $k\neq 0$, then $\gecc^k(\Adot) = 0$, and
$$
\gecc^0(\Adot) = \Z^{(\mfm-1)}\left[T_\0^*\U\right]+\sum_\mfl \Z\left[\overline{T^*_{X_\mfl-\{\0\}}\U}\right].
$$
It follows that
$$
\cc(\Adot) = (\mfm-1)\left[T_\0^*\U\right]+\sum_\mfl \left[\overline{T^*_{X_\mfl-\{\0\}}\U}\right].
$$

\vskip .3in

\noindent $\Bdot$:

We have
$$
m_{S_0}^k(\Bdot)= \hyp^{k+1}(\overset{\circ}{B_\epsilon}\cap X,\overset{\circ}{B_\epsilon}\cap X\cap L^{-1}(a); i_!\Z^\bullet_{X-\{\0\}}).
$$
Using the long exact sequence for the hypercohomology of a pair, and that 
$$\hyp^*(\overset{\circ}{B_\epsilon}\cap X; i_!\Z^\bullet_{X-\{\0\}})=0,$$ we find that
$$
m_{S_0}^k(\Bdot)\cong H^{k}(\overset{\circ}{B_\epsilon}\cap X\cap L^{-1}(a); \Z).
$$
Therefore, $m_{S_0}^k(\Bdot)=0$ if $k\neq 0$, and $m_{S_0}^0(\Bdot)\cong\Z^\mfm$.

\smallskip

Thus, we find that, if $k\neq 0$, then $\gecc^k(\Bdot) = 0$, and
$$
\gecc^0(\Bdot) = \Z^{\mfm}\left[T_\0^*\U\right]+\sum_\mfl \Z\left[\overline{T^*_{X_\mfl-\{\0\}}\U}\right].
$$
It follows that
$$
\cc(\Bdot) = \mfm\left[T_\0^*\U\right]+\sum_\mfl \left[\overline{T^*_{X_\mfl-\{\0\}}\U}\right].
$$

\vskip .3in

\noindent $\Cdot$:

We have
$$
m_{S_0}^k(\Cdot)= \hyp^{k+1}(\overset{\circ}{B_\epsilon}\cap X,\overset{\circ}{B_\epsilon}\cap X\cap L^{-1}(a); i_*\Z^\bullet_{X-\{\0\}})\cong$$
$$H^{k+1}(\overset{\circ}{B_\epsilon}\cap X-\{\0\},\overset{\circ}{B_\epsilon}\cap X\cap L^{-1}(a); \Z).
$$
This splits as a direct sum of the degree $k+1$ integral cohomology of pairs consisting of spaces $\overset{\circ}{B_\epsilon}\cap X_\mfl-\{\0\}$, which are homotopy-equivalent to circles, modulo $\mfm_\mfl$ points. As in the $\Bdot$ case, one easily calculates that that $m_{S_0}^k(\Cdot)=0$ if $k\neq 0$, and $m_{S_0}^0(\Cdot)\cong\Z^\mfm$.

Thus, we find that $\gecc^\bullet(\Bdot)=\gecc^\bullet(\Cdot)$ and, of course, that $\cc(\Bdot)=\cc(\Cdot)$.

\vskip .3in

\noindent $\Idot$:

The axioms of intersection cohomology imply that $\Idot$ is isomorphic to the direct sum of the extensions by zero of the intersection cohomology on each of the $X_\mfl$. As each $X_\mfl$ is homeomorphic to an open disk, the intersection cohomology complex on $X_\mfl$ is isomorphic to $\Z_{X_\mfl}[1]$.

Thus, we have
$$
m_{S_0}^k(\Idot)= \bigoplus_{\mfl\in\Lambda}H^{k+1}(\overset{\circ}{B_\epsilon}\cap X_\mfl,\overset{\circ}{B_\epsilon}\cap X_\mfl\cap L^{-1}(a); \Z).$$
It follows that $m_{S_0}^k(\Idot)=0$ if $k\neq 0$, and $m_{S_0}^0(\Idot)\cong\bigoplus_\mfl \Z^{(\mfm_\mfl-1)}\cong\Z^{(\mfm-e)}$.

Thus, we find that, if $k\neq 0$, then $\gecc^k(\Idot) = 0$, and
$$
\gecc^0(\Idot) = \Z^{(\mfm-e)}\left[T_\0^*\U\right]+\sum_\mfl \Z\left[\overline{T^*_{X_\mfl-\{\0\}}\U}\right].
$$
It follows that
$$
\cc(\Idot) = (\mfm-e)\left[T_\0^*\U\right]+\sum_\mfl \left[\overline{T^*_{X_\mfl-\{\0\}}\U}\right].
$$

\medskip

We still wish to look at the complexes $\Pdot$ and $\Qdot$. Let $\Lambda_{\subseteq}:=\{\mfl\in\Lambda\ |\ f(X_\mfl)\equiv 0\}$. Hence, $V(f)=\bigcup_{\mfl\in \Lambda_{\subseteq}}X_\mfl$, with the convention that, if $\Lambda_{\subseteq}$ is empty, then this union is taken as meaning the point-set $\{\0\}$. Let $\Lambda_{\not\subseteq}:=\Lambda-\Lambda_{\subseteq}$. For each $\mfl\in\Lambda_{\not\subseteq}$, let $\eta_\mfl$ equal the intersection multiplicity $(X_\mfl\cdot V(\tilde f))_\0$, and let $\eta:=\sum_{\mfl\in\Lambda_{\not\subseteq}}\eta_\mfl$. Let $\mfm_\subseteq:=\sum_{\mfl\in\Lambda_\subseteq}\mfm_\mfl$.

\vskip .3in

\noindent $\Pdot$:

By definition, $\Pdot=\psi_f[-1]\Adot$ is a complex of sheaves on $V(f)$, but -- in our current setting -- the support of $\psi_f[-1]\Adot$ will be contained in $\{\0\}$, and the stalk cohomology $H^*(\psi_f[-1]\Adot)_\0$ is isomorphic to $\Z^\eta$ in degree $0$ and is zero in other degrees.

Thus, we find that, if $k\neq 0$, then $\gecc^k(\Pdot) = 0$, and
$$
\gecc^0(\Pdot) = \Z^{\eta}\left[T_\0^*\U\right].
$$
It follows that
$$
\cc(\Pdot) = \eta\left[T_\0^*\U\right].
$$

\vskip .3in

\noindent $\Qdot$:

By definition, $\Qdot=\phi_f[-1]\Adot$ is a complex of sheaves on $V(f)$, and the restriction of $\Qdot$ to $V(f)-\{\0\}$ is isomorphic to the $1$-shifted constant sheaf. In addition, the stalk cohomology $H^*(\phi_f[-1]\Adot)_\0$ is isomorphic to $\Z^{(\eta-1)}$ in degree $0$ and is zero in other degrees. We have
$$
m_{S_0}^k(\Qdot)= \hyp^{k}(\overset{\circ}{B_\epsilon}\cap X\cap V(f),\overset{\circ}{B_\epsilon}\cap X\cap V(f)\cap L^{-1}(a);\phi_f[-1]\Adot),
$$
for $0<|a|\ll 1$. This module fits into the hypercohomology long exact sequence of the pair, in which one has the map induced by inclusion
$$
\hyp^{k}(\overset{\circ}{B_\epsilon}\cap X\cap V(f);\phi_f[-1]\Adot)\rightarrow \hyp^{k}(\overset{\circ}{B_\epsilon}\cap X\cap V(f)\cap L^{-1}(a);\phi_f[-1]\Adot).\eqno{(\dagger)}
$$
The right-hand term above is clearly isomorphic to $$\bigoplus_{\mfl\in\Lambda_\subseteq}\hyp^k(\overset{\circ}{B_\epsilon}\cap X_\mfl\cap L^{-1}(a);\phi_f[-1]\Adot),$$
and $(\dagger)$ can be rewritten as
$$
H^{k+1}(\overset{\circ}{B_\epsilon}\cap X, \overset{\circ}{B_\epsilon}\cap X\cap f^{-1}(b);\Z)\rightarrow \bigoplus_{\mfl\in\Lambda_\subseteq}H^{k+1}(\overset{\circ}{B_\epsilon}\cap X_\mfl\cap L^{-1}(a);\Z),
$$
where $0<|b|\ll |a|\ll 1$. Now, one easily finds from the long exact sequence that
$m_{S_0}^k(\Qdot)=0$ for $k\neq 0$, and $m_{S_0}^0(\Qdot)\cong\Z^{(\mfm_\subseteq+\eta-1)}$.

Thus, we find that, if $k\neq 0$, then $\gecc^k(\Qdot) = 0$, and
$$
\gecc^0(\Qdot) = \Z^{(\mfm_\subseteq+\eta-1)}\left[T_\0^*\U\right]+\sum_{\mfl\in\Lambda_{\subseteq}} \Z\left[\overline{T^*_{X_\mfl-\{\0\}}\U}\right].
$$
It follows that
$$
\cc(\Qdot) = (\mfm_\subseteq+\eta-1)\left[T_\0^*\U\right]+\sum_{\mfl\in\Lambda_{\subseteq}} \left[\overline{T^*_{X_\mfl-\{\0\}}\U}\right].
$$

\vskip .4in

The characteristic cycle calculations for $\Pdot$ and $\Qdot$ can be ``checked''. There is the canonical distinguished triangle
$$
(\Z^\bullet_X[1])_{V(f)}[-1]\rightarrow \psi_f[-1]\Z^\bullet_X[1]\rightarrow \phi_f[-1]\Z^\bullet_X[1]\arrow{[1]}(\Z^\bullet_X[1])_{V(f)}[-1],
$$
and so we should find that $\cc(\Pdot)=\cc(\Qdot)+\cc(\Z^\bullet_{V(f)})$. 

This is easily checked, for $\cc(\Z^\bullet_{V(f)})= -\cc(\Z^\bullet_{V(f)}[1])$ and, applying our calculation of $\cc(\Adot)$, we find that 
$$\cc(\Z^\bullet_{V(f)}[1]) = (\mfm_\subseteq-1)\left[T_\0^*\U\right] +\sum_{\mfl\in\Lambda_{\subseteq}} \left[\overline{T^*_{X_\mfl-\{\0\}}\U}\right].
$$

\bigskip

Note that all of the graded, enriched characteristic cycles in this example are concentrated in degree $0$. As we stated in Item 4 of \propref{prop:basicgecc}, this is a reflection of the fact that each of the complexes that we considered above are perverse sheaves. 
\end{exm}

\begin{exm} We wish a give an easy example/problem, where the sheaves under consideration are not perverse.

\smallskip

Let $\U:=\C^3$, and use $x$, $y$, and $z$ as coordinates. Let $X:=V(z)\cup V(x, y)$. There are three obvious strata: $S_0:=\{\0\}$, $S_1:=V(x, y)-\{\0\}$, and $S_2:= V(z)-\{\0\}$. Let $j:\{\0\}\hookrightarrow X$ and $i:X-\{\0\}\hookrightarrow X$ denote the inclusions, and consider the complexes of sheaves $\Adot:=\Z^\bullet_X[2]$, $\Bdot:=i_!i^!\Adot$, and $\Cdot:=i_*i^*\Adot$.

\bigskip

We leave it to the reader to verify that:

\bigskip

\noindent $\gecc^k(\Adot)=0$ if $k\neq -1, 0$, 

\smallskip

\noindent $\gecc^{-1}(\Adot)=\Z[T^*_{\0}\U]+\Z[T^*_{V(x, y)}\U]$, and $\gecc^0(\Adot)=\Z[T^*_{V(z)}\U]$.

\bigskip

\noindent  $\gecc^k(\Bdot)=0$ if $k\neq -1, 0$, 

\smallskip

\noindent $\gecc^{-1}(\Bdot)=\Z^2[T^*_{\0}\U]+\Z[T^*_{V(x, y)}\U]$, and $\gecc^0(\Bdot)=\Z[T^*_{V(z)}\U]$.

\bigskip

\noindent $\gecc^k(\Cdot)=0$ if $k\neq -1, 0, 1$, 

\smallskip

\noindent $\gecc^{-1}(\Cdot)=\Z[T^*_{\0}\U]+\Z[T^*_{V(x, y)}\U]$, $\gecc^0(\Cdot)=\Z[T^*_{V(z)}\U]$, and $\gecc^{1}(\Cdot)=\Z[T^*_{\0}\U]$.
\end{exm}

\medskip

\begin{rem} Before we leave this section, we need to make an important point.

In this paper, we shall show that many results about characteristic cycles are true for graded, enriched characteristic cycles -- with essentially the same proofs; however, because the graded, enriched characteristic cycles contain far more data, it should not be surprising that, for some results, the ordinary characteristic cycle is easier to calculate with. In particular, the additivity of CC for distinguished triangles, as given in Item 3 of \propref{prop:basicccprops}, is extremely useful.
\end{rem}

\section{Characteristic Complexes and the Local Euler Obstruction}\label{sec:localeuler}

Throughout this paper, we emphasize that many classical problems on the local topology of hypersurfaces, inside a possibly singular space $X$, can/should be approached by taking various complexes of sheaves on $X$ and looking at their characteristic cycles or graded, enriched characteristic cycles. 

However, some classical constructions, such as calculating the polar varieties and polar multiplicities of L\^e and Teissier, deal with contributions from only the smooth strata of $X$. From our point of view, these are results where the underlying complex of sheaves is a {\it characteristic complex}.

\begin{defn} Let $X=\bigcup_i X_i$ be the decomposition of $X$ into its irreducible components. 

We say that a complex of sheaves $\Kdot$ on $X$ is a {\bf characteristic complex for $X$} provided that
$$
\cc(\Kdot) = \left[\overline{T^*_{X_{\operatorname{reg}}}\U}\right]= \sum_i \left[\overline{T^*_{(X_i)_{\operatorname{reg}}}\U}\right].
$$
\end{defn}

\medskip

\begin{prop}\label{prop:ccc} Let $X=\bigcup_i X_i$ be the decomposition of $X$ into its irreducible components.  Suppose that, for each $i$, $\Kdot_i$ is a characteristic complex for $X_i$ and let ${\widehat{\mathbf K}}_i^\bullet$ denote the extension by zero of $\Kdot_i$ to all of $X$. Then, $\bigoplus_i {\widehat{\mathbf K}}_i^\bullet$ is a characteristic complex for $X$.
\end{prop}
\begin{proof} This is immediate from Item 3 of \propref{prop:basicccprops}.
\end{proof}

\medskip

\begin{prop} Characteristic complexes exist for all $X$.
\end{prop}
\begin{proof} This proof is contained in Lemma 3.1 of \cite{hypercohom}. However, we wish to sketch it here.

Note that \propref{prop:ccc} implies that we need deal only with the case where $X$ is irreducible. Hence, we assume that $X$ is irreducible of dimension $d$.

\smallskip

Let $\strat$ be a Whitney stratification of $X$, with connected strata. Recall that our base ring is $R$, which we are assuming is an integral domain.

\smallskip

For every stratum $S\in \strat$ and every non-negative integer $v$, let $\Udot_{S, v}$ denote the extension by zero to all of $X$ of $\big(\mathbf R^\bullet_S\big)^v[\dim S]$ so that $c_S(\Udot_{S,v})=v$ (where $c_S$ is the coefficient of $\big[\overline{T_S^*\U}\big]$ in the characteristic cycle). If $v$ is a negative integer, define $\Udot_{S,v}:=\Udot_{S,-v}[1]$ so that, again,  $c_S(\Udot_{S,v})=v$.

Now we construct a characteristic complex as a direct sum, canceling out conormal cycles over lower-dimensional strata.  Let 
$$\Kdot_d=\Kdot_{\geq d} :=\Udot_{X_{\operatorname{reg}}, 1}.
$$
Note that $\Kdot_d$ is also constructible with respect to $\strat$ and, if $S\in \strat$ has dimension $d$, then $c_S(\Kdot_d)=1$.

Now we need cancel out the contributions to the characteristic cycle from lower-dimensional strata. 

Let 
$$\Kdot_{d-1}:=\bigoplus_{S\in \strat, \dim S=d-1}\Udot_{S,-c_S(\Cdot_d)},$$
so that $\Kdot_{\geq d-1}:=\Kdot_d\oplus\Kdot_{d-1}$ has the property that, for $S\in \strat$ of dimension at least $d-1$,
$$
c_S\left(\Kdot_{\geq d-1}\right)=
\begin{cases}
1, \textnormal{ if } \dim S=d;\\
0, \textnormal{ if }\dim S=d-1.
\end{cases}
$$

Continuing in this manner, we produce $\Kdot:=\Kdot_{\geq 0}$ which is a characteristic complex for $X$.
\end{proof}

\medskip

\begin{prop}\label{prop:kextprod} Suppose that $R$ is a principal ideal domain. Let $X$ and $Y$ be analytic spaces, let $\pi_1:X\times Y\rightarrow X$ and $\pi_2:X\rightarrow Y$ denote the projections.  Let $\Kdot_X$ and $\Kdot_Y$ be characteristic complexes for $X$ and $Y$, respectively. Let $\Kdot_X\lboxtimes\Kdot_Y:=\pi_1^*\Kdot_X\lotimes\pi_2^*\Kdot_Y$.

Then, $\Kdot_X\lboxtimes\Kdot_Y$ is a characteristic complex for $X\times Y$.

\end{prop}
\begin{proof} This is immediate from Item 8 of \propref{prop:basicgecc}.
\end{proof}

\smallskip

\noindent\rule{1in}{1pt}

\bigskip

Aside from giving us a way, in terms of constructible complexes, to isolate contributions from smooth strata, our primary interest in characteristic complexes lies in their relationship with the famous {\it local Euler obstruction} of MacPherson \cite{maceuler}. We let $\operatorname{Eu}_{\mathbf p}X$ denote the local Euler obstruction of $X$ at $\mathbf p$.

Basic properties of the local Euler obstruction are:

\begin{enumerate}
\item The local Euler obstruction is, in fact, local, i.e., if $\W$ is an open neighborhood of $\mathbf p$ in $X$, then $\operatorname{Eu}_{\mathbf p}X=\operatorname{Eu}_{\mathbf p}\W.$
\smallskip
\item If $\mathbf p$ is a smooth point of $X$, then $\operatorname{Eu}_{\mathbf p}X=1$.
\smallskip
\item If $(\mathbf x, \mathbf y)\in X\times Y$, then $\operatorname{Eu}_{(\mathbf x, \mathbf y)}(X\times Y)=\left(\operatorname{Eu}_{\mathbf x}X\right)\left(\operatorname{Eu}_{\mathbf y}Y\right)$.
\smallskip
\item If $\mathbf p\in X$ and $X_i$ denotes the local irreducible components of $X$ at $\mathbf p$, then $\operatorname{Eu}_{\mathbf p}X=\sum_i\operatorname{Eu}_{\mathbf p}X_i.$
\smallskip
\item $\operatorname{Eu}_{\mathbf x}X$ is a constant function of $\mathbf x$ along the strata of any Whitney stratification of $X$ (which has connected strata).
\end{enumerate}

\smallskip

There is also the important result:

\begin{thm}\label{thm:bdk} (Brylinski, Dubson, Kashiwara, \cite{bdk}) Suppose that $\Adot$ on $X$ is constructible with respect a Whitney stratification $\strat$, and that
$$
\cc(\Adot)=\sum_{S\in\strat}c_S(\Adot)\left[\overline{T^*_S\U}\right].
$$
Then, for all $\mathbf p\in X$,
$$
\chi(\Adot)_{\mathbf p}=\sum_{S\in\strat} (-1)^{\dim S}c_S(\Adot)\operatorname{Eu}_{\mathbf p}\overline S,
$$
where we set $\operatorname{Eu}_{\mathbf p}(\overline S)=0$ if $\mathbf p\not\in\overline{S}$.
\end{thm}

\medskip

We immediately conclude:

\begin{cor}\label{cor:eulerob} Let $X=\bigcup_i X_i$ be the decomposition of $X$ into its irreducible components, and let $d_i:=\dim X_i$. Let $\Kdot_i$ be a characteristic complex for $X_i$. Let $\mathbf p\in X$. 

Then,
$$
\operatorname{Eu}_{\mathbf p}X= \sum_i (-1)^{\dim X_i}\chi\big(\Kdot_i\big)_{\mathbf p}.
$$

In particular, if $X$ is pure-dimensional and $\Kdot$ is a characteristic complex for $X$, then
$$
\operatorname{Eu}_{\mathbf p}X= (-1)^{\dim X}\chi\big(\Kdot\big)_{\mathbf p}.
$$
\end{cor}

\smallskip

\noindent\rule{1in}{1pt}

\bigskip
 We wish to discuss the {\it relative local Euler obstruction}, as was introduced  in \cite{bmps}.
 
 \smallskip
 
Suppose that we $\p\in X$ and a complex analytic $f:X\rightarrow\C$. We let $\tilde f$ be a local extension of $f$ at $\p$ to an open neighborhood $\W$ of $\p$ in $\U$. We also let $d\tilde f$ denote the section of the cotangent bundle to $\W$ given by $d\tilde f(\mathbf x)=(\mathbf x, d_{\mathbf x}\tilde f)$; we let $\operatorname{im}(d\tilde f)$ denote the image of this section in $T^*\U$.

\medskip

Assuming that $X$ is pure-dimensional,  the relative local Euler obstruction, $\operatorname{Eu}_{\mathbf p}f$, is defined, provided that $\p$  is a stratified isolated critical point of $f$; see \cite{bmps}.

\medskip

In Corollary 5.4 of \cite{bmps}, we show:

\begin{prop}\label{prop:releuler} Suppose that $X$ is pure-dimensional and that $f:X\rightarrow \C$ has a stratified isolated critical point at $\mathbf p$. Let $\Kdot$ be a characteristic complex for $X$. 
Then, $(\mathbf p, d_{\mathbf p}\tilde f)$ is an isolated point in the intersection $\overline{T_{X_{\operatorname{reg}}}^*\U}\cap\operatorname{im}(d\tilde f)$ and

$$
\operatorname{Eu}_{\mathbf p}f =  (-1)^{\dim X}\chi\big(\phi_{f-f(\mathbf p)}[-1]\Kdot\big)_{\mathbf p}=(-1)^{\dim X}\left(\overline{T^*_{X_{\operatorname{reg}}}\U}\cdot \operatorname{im}(d\tilde f)\right)_{(\mathbf p, d_{\mathbf p}\tilde f)},
$$
where this intersection number, in the case where $X$ is affine space, is the Milnor number of $f-f(\mathbf p)$ at $\mathbf p$.
\end{prop}
Note that the above corollary looks slightly different from what appears in Corollary 5.4 of \cite{bmps}; this is because our definition of the characteristic cycle has changed by a shift.

\medskip

We can use \propref{prop:releuler} as the basis for generalizing the definition of the relative local Euler obstruction to (possibly) non-isolated critical points of functions on spaces which need not be pure-dimensional. 

\medskip

Let $X=\bigcup X_i$ be the decomposition of $X$ into its irreducible components. Let $\Kdot_i$ be a characteristic complex for $X_i$ and let ${\widehat{\mathbf K}}_i^\bullet$ denote the extension by zero of $\Kdot_i$ to all of $X$. Let $f_i$ denote the restriction of $f$ to $X_i$.

\medskip

\begin{defn}\label{def:releuler}  We define the {\bf relative local Euler obstruction of $f$} at $\p\in X$ to be
$$
\operatorname{Eu}_{\mathbf p}f =  \sum_i(-1)^{\dim X_i}\chi\big(\phi_{f_i-f_i(\mathbf p)}[-1]\Kdot_i\big)_{\mathbf p}=  \sum_i(-1)^{\dim X_i}\chi\big(\phi_{f-f(\mathbf p)}[-1]{\widehat{\mathbf K}}_i^\bullet\big)_{\mathbf p}
$$
Note that $\operatorname{Eu}_{\mathbf p}f$ is well-defined by \corref{cor:ccvan}.
\end{defn}

\medskip

\begin{thm}\label{thm:releuler}
The relative local Euler obstruction has the following properties:
\begin{enumerate}
\item If $f\equiv 0$, then $\operatorname{Eu}_{\mathbf p}f=\operatorname{Eu}_{\mathbf p}X$.
\medskip
\item If $(\p, d_\p\tilde f)\not\in\overline{T^*_{X_{\operatorname{reg}}}\U}$, then $\operatorname{Eu}_{\mathbf p}f =0$.
\medskip
\item If $(\p, d_\p\tilde f)$ is an isolated point in $\overline{T^*_{X_{\operatorname{reg}}}\U}\cap\operatorname{im}d\tilde f$, then 
$$
\operatorname{Eu}_{\mathbf p}f= \sum_i(-1)^{\dim X_i}\left(\overline{T^*_{(X_i)_{\operatorname{reg}}}\U}\cdot \operatorname{im}(d\tilde f)\right)_{(\mathbf p, d_{\mathbf p}\tilde f)}
$$
\medskip
\item $\operatorname{Eu}_{\mathbf p}f =\sum_i\operatorname{Eu}_{\mathbf p}f_i$.
\medskip
\item Suppose that $R$ is a principal ideal domain. Let $\mathbf q\in Y$ and suppose that we have a complex analytic function $g:Y\rightarrow\C$. Let $f\boxplus g$ denote the function from $X\times Y$ to $\C$ given by $(f\boxplus g)(x,y)=f(x)+g(y)$. Then $\operatorname{Eu}_{\p\times\mathbf q}(f\boxplus g)=\operatorname{Eu}_{\mathbf p}f\cdot \operatorname{Eu}_{\mathbf q}g$.
\medskip
\end{enumerate}
\end{thm}
\begin{proof} Item 1 follows at once from \corref{cor:eulerob} and \defref{def:releuler}.

\smallskip

Item 4 is immediate from the definition. Item 3 follows immediately from \propref{prop:releuler}. Alternatively, both Items 2 and 3 are immediate from the vanishing cycle index theorem of Ginsburg \cite{ginsburg},  
L\^e \cite{leconcept}, and Sabbah \cite{sabbahquel}, which tells us that, for every bounded, constructible complex $\Adot$ on $X$, if $(\p, d_\p\tilde f)$ is an isolated point in $|\cc(\Adot)|\cap \operatorname{im}(d\tilde f)$, then
$$
\chi\left(\phi_{f-f(\p)}[-1]\Adot\right)_\p = \big(\cc(\Adot)\cdot \operatorname{im}(d\tilde f)\big)_{(\p, d_\p\tilde f)}.
$$

\smallskip

\noindent Item 5:

\smallskip

Let us assume, without loss of generality, that $f(\p)=0$ and $g(\mathbf q)=0$. Let $\Kdot_X$ and $\Kdot_Y$ be characteristic complexes for $X$ and $Y$, Then, we know from \propref{prop:kextprod} that $\Kdot_X\lboxtimes\Kdot_Y$ is a characteristic complex for $X\times Y$.

Then, the derived category version of the Sebastiani-Thom Theorem which we proved in \cite{masseysebthom} tells us that
$$
H^k\big(\phi_{f\boxplus g}(\Kdot_X\lboxtimes\Kdot_Y)\big)_{(\p, \mathbf q)}\cong H^k\big(\phi_f\Kdot_X\lboxtimes\phi_g\Kdot_Y\big)_{(\p, \mathbf q)}\cong
$$
$$
\bigoplus_{i+j=k}H^i(\phi_f\Kdot_X)_\p\otimes H^j(\phi_g\Kdot_Y)_{\mathbf q}\oplus \bigoplus_{i+j=k+1}\operatorname{Tor}\big(H^i(\phi_f\Kdot_X)_\p, H^j(\phi_g\Kdot_Y)_{\mathbf q}\big).
$$
\medskip

\noindent Item 5 follows.
\end{proof}

\medskip

\begin{rem} We naturally refer to Property 5 above as the {\it Sebastiani-Thom property} of the relative local Euler obstruction.
\end{rem}
 
\section{Basics of Enriched Cycles}\label{sec:enrcycles}

In \defref{def:gecc}, we defined the graded, enriched characteristic cycle. In this section, we wish to describe graded, enriched cycles more generally and carefully. We also describe the associated intersection theory. The intersection theory that we use is a fairly simple extension of the intersection theory of properly intersecting cycles in an analytic manifold, as described in section 8.2 of \cite{fulton}. In this case, one obtains intersection {\bf cycles}, not merely rational equivalence classes of cycles.

\vskip .2in

\begin{defn}\label{def:basicenrich} An {\it enriched cycle}, $E$, in $X$ is a formal, locally finite sum $\sum_V E_V[V]$, where the
$V$'s are irreducible analytic subsets of $X$ and the
$E_V$'s are finitely-generated $R$-modules. We refer to the $V$'s as the {\it components\/} of $E$, and to $E_V$ as the {\it
$V$-component module of
$E$}. Two enriched cycles are considered the same provided that all of the component modules are isomorphic. The underlying set
of $E$ is
$|E|:=
\cup_{{}_{E_V\neq 0}}V$.

If $C = \sum n_V[V]$ is an ordinary positive cycle in $X$, i.e., all of the
$n_v$ are non-negative integers, then there is a corresponding enriched cycle $[C]^{\operatorname{enr}}$ in which the
$V$-component module is the free
$R$-module of rank $n_V$. If $R$ is an integral domain, so that rank of an $R$-module is well-defined, then an enriched cycle
$E$ yields an ordinary cycle $[E]^{\operatorname{ord}}:=\sum_V (\operatorname{rk}(E_V))[V]$.

If $q$ is a finitely-generated module and $E$ is an enriched cycle, then we let
$qE:=\sum_V(q\otimes E_V)[V]$; thus, if $R$ is an integral domain and $E$ is an enriched cycle,
$[qE]^{\operatorname{ord}}=(\operatorname{rk}(q))[E]^{\operatorname{ord}}$ and if
$C$ is an ordinary positive cycle and $n$ is a positive integer, then $[nC]^{\operatorname{enr}}=R^n[C]^{\operatorname{enr}}$.

\vskip .1in

The (direct) sum of two enriched cycles $D$ and $E$ is given by $(D + E)_V := D_V\oplus E_V$. 

\vskip .1in

There is a partial ordering on isomorphism classes of finitely-generated $R$-modules given by $M\leq Q$ if and only if there exists a finitely-generated $R$-module $N$ such that $M\oplus N\cong Q$. This relation is clearly reflexive and transitive; moreover, anti-symmetry follows from the fact that if $M$ and $N$ are Noetherian modules such that $M\oplus N\cong M$, then $N=0$. This partial ordering extends to a partial ordering on enriched cycles given by: $D\leq E$ if and only if there exists an enriched cycle $P$ such that $D+P=E$. If the base ring $R$ is a PID and $D+P=E$, then $D$ is uniquely determined by $P$ and $E$, and we write $D=E-P$.

\vskip .1in

If two irreducible analytic subsets $V$ and
$W$ intersect properly in $\U$, then the (ordinary) intersection cycle $[V]\cdot[W]$ is a well-defined positive cycle; we
define the enriched intersection product of $[V]^{\operatorname{enr}}$ and $[W]^{\operatorname{enr}}$ by
$[V]^{\operatorname{enr}}\odot[W]^{\operatorname{enr}} = ([V]\cdot[W])^{\operatorname{enr}}$. If $D$ and $E$ are enriched
cycles, and every component of $D$ properly intersects every component of $E$ in $\U$, then we say that {\it $D$ and $E$
intersect properly\/} in $\U$ and we extend the intersection product linearly, i.e., if $D=\sum_V D_V[V]$ and $E=\sum_W
E_W[W]$, then
$$ D\odot E:= \sum_{V, W} (D_V\otimes E_W)([V]\cdot [W])^ {\operatorname{enr}}.
$$

A {\it graded, enriched cycle $E^\bullet$\/} is simply an enriched cycle $E^i$ for $i$ in some bounded set of integers. An single
enriched cycle is considered as a graded enriched cycle by being placed totally in degree zero. The analytic set $V$ is a {\it
component of $E^\bullet$} if and only if $V$ is a component of $E^i$ for some $i$, and the underlying set of $E^\bullet$ is
$|E^\bullet|=\cup_i|E^i|$. If $R$ is a domain, then $E^\bullet$ yields an ordinary cycle
$[E^\bullet]^{\operatorname{ord}}:=\sum_i(-1)^i(\operatorname{rk}(E^i_V))[V]$. If $k$ is an integer, we define the {\it
$k$-shifted graded, enriched cycle}
$E^\bullet[k]$ by $(E^\bullet[k])^i:= E^{i+k}$.

If $q$ is a finitely-generated module and $E^\bullet$ is a graded enriched cycle, then we define the graded enriched cycle
$qE^\bullet$ by
$(qE^\bullet)^i:=\sum_V(q\otimes E^i_V)[V]$. The (direct) sum of two graded enriched cycles $D^\bullet$ and $E^\bullet$ is given
by $(D^\bullet+ E^\bullet)^i_V := D^i_V\oplus E^i_V$. If $D^i$ properly intersects $E^j$ for all $i$ and $j$, then we say that
$D^\bullet$ and $E^\bullet$ {\it intersect properly\/} and we define the intersection product by 
$$ (D^\bullet\odot E^\bullet)^k:=\sum_{i+j = k}(D^i\odot E^j).
$$

Whenever we use the enriched intersection product symbol, we mean that we are considering the objects on both sides of $\odot$ as graded, enriched cycles, even if we do not superscript by $\operatorname{enr}$ or  $\bullet$.
\end{defn}

\bigskip

Let $\tau: W\rightarrow Y$ be a proper morphism between analytic spaces. If $C = \sum n_V[V]$ is an ordinary positive cycle in
$W$, then the proper push-forward $\tau_*(C) = \sum n_V\tau_*([V])$ is a well-defined ordinary cycle. 

\begin{defn}\label{def:properpush} If $E^\bullet=\sum_V
E^\bullet_V[V]$ is an enriched cycle in $W$, then we define the {\bf proper push-forward of $E^\bullet$ by $\tau$} to be the
graded enriched cycle $\tau^\bullet_*(E^\bullet)$ defined by
$$
\tau^j_*(E^\bullet) \ := \ \sum_V E^j_V[\tau_*([V])]^{\operatorname{enr}}.
$$ 
\end{defn}

The ordinary projection formula for divisors ([{\bf F}], 2.3.c) immediately implies the following enriched version. 

\begin{prop}\label{prop:properpush} Let
$E^\bullet$ be a graded enriched cycle in $X$. Let $W:=|E^\bullet|$. Let $\tau:W\rightarrow Y$ be a proper morphism, and let
$g: Y\rightarrow\mathbb C$ be an analytic function such that
$g\circ\tau$ is not identically zero on any component of $E^\bullet$. Then, $g$ is not identically zero on any component of
$\tau^\bullet_*(E^\bullet)$ and
$$
\tau^\bullet_*\big(E^\bullet\ \odot \ V(g\circ\tau)\big) \ = \ \tau^\bullet_*(E^\bullet) \ \odot \ V(g).
$$
\end{prop}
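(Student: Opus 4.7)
The plan is to reduce the assertion to the ordinary divisor projection formula (Fulton, 2.3.c) by exploiting the fact that every operation in sight is linear in the module coefficients. Decompose $E^\bullet$ as the direct sum $\bigoplus_{j,V} E^j_V[V]$ of pieces, each sitting in a single degree $j$ on a single irreducible component $V$. Since $\odot V(g\circ\tau)$, $\odot V(g)$, and $\tau^\bullet_*$ all distribute over direct sums and commute with shifts (which only relabel the degree), it suffices to prove the identity for a single summand $E^\bullet = M[V]$, with $M$ a finitely generated $R$-module and $V$ irreducible with $g\circ\tau\not\equiv 0$ on $V$.

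For the first assertion, any component $Z$ of $\tau^\bullet_*(M[V])$ is, by \defref{def:properpush}, a component of the ordinary cycle $\tau_*([V])$, so $Z\subseteq \overline{\tau(V)}$. If $g\circ\tau$ is not identically zero on $V$, some $v\in V$ has $g(\tau(v))\neq 0$, so $g$ does not vanish identically on $\overline{\tau(V)}\supseteq Z$. Thus the enriched intersection $\tau^\bullet_*(E^\bullet)\odot V(g)$ on the right-hand side is well-defined.

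For the equality, write $V(g\circ\tau)=\sum_k m_k[D_k]$ as a sum over its ordinary divisor components. Unwinding \defref{def:basicenrich},
$$M[V]\odot V(g\circ\tau) \;=\; \sum_k (M\otimes R^{m_k})\bigl([V]\cdot[D_k]\bigr)^{\operatorname{enr}} \;=\; M\cdot\bigl([V]\cdot V(g\circ\tau)\bigr)^{\operatorname{enr}},$$
since tensoring $M$ with a free module of rank $m_k$ just records the ordinary multiplicity $m_k$, and the enriched intersection along $[V]\cdot[D_k]$ further tensors by the free module whose rank is the ordinary intersection multiplicity. Applying $\tau^\bullet_*$ and using \defref{def:properpush} gives
$$\tau^\bullet_*\bigl(M[V]\odot V(g\circ\tau)\bigr) \;=\; M\cdot\bigl[\tau_*\bigl([V]\cdot V(g\circ\tau)\bigr)\bigr]^{\operatorname{enr}}.$$
An identical computation on the other side yields $\tau^\bullet_*(M[V])\odot V(g) \;=\; M\cdot\bigl[\tau_*([V])\cdot V(g)\bigr]^{\operatorname{enr}}.$ Since the ordinary projection formula for divisors gives $\tau_*\bigl([V]\cdot V(g\circ\tau)\bigr) = \tau_*([V])\cdot V(g)$, the two enriched cycles agree component by component.

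The only real obstacle is keeping the bookkeeping straight: the module $M$ is tensored successively with free modules whose ranks record the ordinary intersection and push-forward multiplicities, and one must verify that both compositions produce the same module on each irreducible component of the result. This commutes because tensoring with a free module depends only on its rank, and the ordinary projection formula makes those ranks match on the two sides.
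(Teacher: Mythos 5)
Your proof is correct and follows the same route the paper takes: the paper simply asserts that Proposition~\ref{prop:properpush} follows ``immediately'' from Fulton's ordinary projection formula for divisors, and your argument fills in the bookkeeping that justifies that claim. The key observations you make---that all operations distribute over the decomposition into single summands $M[V]$, that $M\otimes R^a\otimes R^b\cong M\otimes R^{ab}$ so the module coefficient merely carries along the product of the ordinary multiplicities, and that the two sides therefore coincide once one invokes $\tau_*\bigl([V]\cdot V(g\circ\tau)\bigr)=\tau_*([V])\cdot V(g)$---are exactly what is needed, and your verification of the preliminary assertion (that $g\not\equiv 0$ on each component of $\tau^\bullet_*(E^\bullet)$, since any such component is $\overline{\tau(V)}$ for a $V$ on which $g\circ\tau\not\equiv 0$) is also correct.
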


\section{The Relative Polar Curve}\label{sec:relpolarcurve}

We will use the notation established in \secref{sec:gecc}:  $\U$ is an open neighborhood of the origin in $\C^{n+1}$, $X$ is a closed, analytic subset of $\U$, $\strat$ is a complex analytic Whitney stratification of $X$, with connected strata, $R$ is a base ring (with some technical assumptions), $\Fdot$ is a bounded complex of sheaves of $R$-modules on $X$, which is constructible with respect to $\strat$, $\mathbf z=(z_0, \dots, z_n)$ is a set of coordinates on $\U$, we identify the cotangent space $T^*\U$ with $\U\times\C^{n+1}$ by mapping $(\p, w_0d_{\p}z_0+\dots+w_nd_{\p}z_n)$ to $(\p, (w_0,\dots, w_n))$, for each $S\in\strat$, $d_S=\dim S$, and $(\N_S, \cL_S)$ is the complex Morse data for $S$ in $X$, consisting of a normal slice and complex link of $S$ in $X$.

Let $\tilde f$ and $\tilde g$ be analytic functions from $(\U, \0)$ to $(\C, 0)$, and let $f$ and $g$ denote the restrictions of $\tilde f$ and $\tilde g$, respectively, to $X$.  By refining $\strat$, if necessary, we assume that $V(f)$ is a union of strata. 

\begin{defn}
Let $\strat(\Fdot):=\{S\in\strat\ |\ \hyp^*(\mathbb N_S,\mathbb L_S; \Fdot)\neq 0\}$; we refer to the elements of $\strat(\Fdot)$ as the {\it $\Fdot$-visible strata of $\strat$}.
\end{defn}

Fix a point $\p\in \U$. In  \cite{hammlezariski}, \cite{teissiercargese}, \cite{leattach}, \cite{letopuse}, Hamm, Teissier, and L\^e define and use the relative polar curve (of $\tilde f$ with respect to $z_0$), $\Gamma^1_{\tilde f, z_0}$, to prove a number of topological results related to the Milnor fiber of hypersurface singularities. We shall recall some definitions and results here. We should mention that there are a number of different characterizations of the relative polar curve, all of which agree when $z_0$ is sufficiently generic; below, we have selected what we consider the easiest way of describing the relative polar curve as a set, a scheme, and a cycle.

\smallskip

As a set, $\Gamma^1_{\tilde f, z_0}$ is the closure of the critical locus of $(\tilde f, z_0)$ minus the critical locus of $\tilde f$, i.e., $\Gamma^1_{\tilde f, z_0}$ equals $\overline{\Sigma(\tilde f, z_0)-\Sigma \tilde f}$, as a set. If $z_0$ is sufficiently generic for $\tilde f$ at $p$, then, in a neighborhood of $p$, $\Gamma^1_{\tilde f, z_0}$ will be purely one-dimensional (which includes the possibility of being empty).

It is not difficult to give $\Gamma^1_{\tilde f, z_0}$ a scheme structure. If $\Gamma^1_{\tilde f, z_0}$ is purely one-dimensional at $\p$, then, at points $\mathbf x$ near, but unequal to, $\p$,  $\Gamma^1_{\tilde f, z_0}$ is given the structure of the scheme $\displaystyle V\left(\frac{\partial\tilde f}{\partial z_1}, \dots, \frac{\partial\tilde f}{\partial z_n}\right)$. One can also ``algebraically'' remove any embedded components of $\Gamma^1_{\tilde f, z_0}$ at $\p$ by using {\it gap sheaves\/}; see Chapter 1 of \cite{lecycles}.

In practice, all topological applications of the relative polar curve use only its structure as an analytic cycle (germ), that is, as a locally finite sum of irreducible analytic sets (or germs of sets) counted with integral multiplicities (which will all be non-negative). 

 If $C$ is a one-dimensional irreducible germ of $\Gamma^1_{\tilde f, z_0}$ at $\p$, and $\mathbf x\in C$ is close to, but unequal to, $\p$, then  the component $C$ appears in the cycle $\Gamma^1_{\tilde f, z_0}$ with multiplicity given by the Milnor number of $\tilde f_{|_H}$ at $\mathbf x$, where $H$ is a generic affine hyperplane passing through $\mathbf x$.

\bigskip

Suppose that $M$ is a complex submanifold of $\U$. Recall:

\begin{defn}\label{def:main} The relative conormal space $T^*_{\tilde f_{|_M}}\U$ is given by 
$$
T^*_{\tilde f_{|_M}}\U :=\{(x, \eta)\in T^*\U\ |\ \eta(T_xM\cap \ker d_x\tilde f)=0\}.
$$

If $M\subseteq X$, then $T^*_{\tilde f_{|_M}}\U$ depends on $f$, but not on the particular extension $\tilde f$. In this case, we write $T^*_{f_{|_M}}\U$ in place of $T^*_{\tilde f_{|_M}}\U$.
\end{defn}

\begin{defn}\label{def:relconorm} The {\bf graded, enriched relative conormal cycle, $\big(T^*_{{}_{f,\Fdot}}\U\big)^\bullet$, of $f$, with respect to $\Fdot$}, is defined by
$$\big(T^*_{{}_{f,\Fdot}}\U\big)^k:=\sum_{\substack{S\in\strat(\Fdot)\\ f_{|_S}\neq{\rm\ const.}}}m^k_S(\Fdot)\left[\overline{T^*_{f_{|_S}}\U}\right].$$
\end{defn}

\bigskip

We now wish to define the graded, enriched relative polar curve.  
We will consider the image, $\im d\tilde g$, of $d\tilde g$ in $T^*\U$; this scheme is defined by 
$$V\left(w_0- \frac{\partial \tilde g}{\partial z_0}, \dots, w_n- \frac{\partial \tilde g}{\partial z_n}\right)\subseteq \U\times\C^{n+1}.$$
We will consider $\im d\tilde g$ as a scheme, an analytic set, an ordinary cycle, and as a graded, enriched cycle; we will denote all of these by simply $\im d\tilde g$, and explicitly state what structure we are using or let the context make the structure clear.

Note that the projection $\pi$ induces an isomorphism from the analytic set $\im d\tilde g$ to $\U$. We will use the proper push-forward (\defref{def:properpush}) of the map $\pi$ restricted to $\im d\tilde g$; we will continue to denote this restriction by simply $\pi$. 

  By our conventions in \secref{sec:enrcycles}, the graded, enriched cycle $\im d\tilde g$ is zero outside of degree $0$, and is the enriched cycle $R[\im d\tilde g]$ in degree $0$.

\begin{defn}\label{def:polarcurve} If $S\in\strat$ and $f_{|_S}$ is not constant, we define the  {\bf relative polar set}, $\big|\Gamma_{f,\tilde g}(S)\big|$, to be $\pi\left(\overline{T^*_{f_{|_S}}\U}\ \cap\ \im d\tilde g\right)$; if this set is purely $1$-dimensional, so that $\overline{T^*_{f_{|_S}}\U}$ and $\im d\tilde g$ intersect properly, we define the (ordinary) {\bf relative polar cycle}, $\Gamma_{f,\tilde g}(S)$, to be the cycle $\pi_*\left(\left[\overline{T^*_{f_{|_S}}\U}\right]\cdot [\im d\tilde g]\right)$.

The {\bf relative polar set}, $\big|\Gamma_{f, \tilde g}(\Fdot)\big|$, is defined by 
$$
\big|\Gamma_{f, \tilde g}(\Fdot)\big|:= \pi\left(\big|\big(T^*_{{}_{f,\Fdot}}\U\big)^\bullet\big|\cap \im d\tilde g\right).
$$

Each $1$-dimensional component $C$ of $\big|\Gamma_{f, \tilde g}(\Fdot)\big|$ is the image of a component of $\big|\big(T^*_{{}_{f,\Fdot}}\U\big)^\bullet\big|\cap \im d\tilde g$ along which $\big|\big(T^*_{{}_{f,\Fdot}}\U\big)^\bullet\big|$ and  $\im d\tilde g$ intersect properly. We give such a component $C$ the structure of the graded, enriched cycle whose underlying set is $C$ and whose graded, enriched cycle structure is given by $\pi_*^\bullet\left(\big(T^*_{{}_{f,\Fdot}}\U\big)^\bullet\odot \im d\tilde g\right)$ over generic points in $C$. We refer to this as the {\bf graded, enriched cycle structure of $C$ in $\big|\Gamma_{f, \tilde g}(\Fdot)\big|$}.

If  $\big|\Gamma_{f, \tilde g}(\Fdot)\big|$ is purely $1$-dimensional, we say that the {\bf graded, enriched relative polar curve}, $\big(\Gamma^1_{f, \tilde g}(\Fdot)\big)^\bullet$, is defined, and is given by 
$$
\big(\Gamma^1_{f, \tilde g}(\Fdot)\big)^\bullet:= \pi_*^\bullet\left(\big(T^*_{{}_{f,\Fdot}}\U\big)^\bullet\odot \im d\tilde g\right),
$$
i.e., 
$$
\big(\Gamma^1_{f, \tilde g}(\Fdot)\big)^k = \sum_{\substack{S\in\strat(\Fdot)\\ f_{|_S}\neq{\rm\ const.}}}m^k_S(\Fdot)\left(\Gamma_{f, \tilde g}(S)\right)^{\operatorname{enr}}.
$$
\end{defn}

\begin{rem}\label{rem:polar} If $\tilde g=z_0$ is a generic linear form and $S=\U$, then $\Gamma_{f,\tilde g}(S)$ is the classical polar curve $\Gamma^1_{\tilde f, z_0}$ (as a cycle) of Hamm, L\^e, and Teissier. 

In the notation for the polar curve, we write $\tilde g$, not simply $g$; we do not, in fact, know if $\big(\Gamma^1_{f, \tilde g}(\Fdot)\big)^\bullet$ is independent of the extension to $\tilde g$. However, when $\big(\Gamma^1_{f, \tilde g}(\Fdot)\big)^\bullet$ is defined and has no component on which $f$ is constant, then $\big(\Gamma^1_{f, \tilde g}(\Fdot)\big)^\bullet$ is independent of the extension $\tilde g$. It is also not difficult to show that the set $\big|\Gamma_{f, \tilde g}(\Fdot)\big|$ is independent of the extension of $g$, but we shall not need this result here.

Note that $\overline{T^*_{f_{|_S}}\U}\cap \im d\tilde g$ is at least $1$-dimensional at each point of intersection, and so $\big|\Gamma_{f, \tilde g}(\Fdot)\big|$ has no isolated points. Also, note that, as $\big|\big(T^*_{{}_{f,\Fdot}}\U\big)^\bullet\big|\cap \im d\tilde g$ is a closed subset of $\im d\tilde g$, and $\pi$ induces an isomorphism from $\im d\tilde g$ to $\U$, $\big|\Gamma_{f, \tilde g}(\Fdot)\big|$ is a closed subset of $\U$.

Finally, if the relative polar set is $1$-dimensional, we frequently superscript with a $1$ to emphasize that fact.
\end{rem}

\medskip

For the purposes of this paper, we need to recall the following proposition, which is Proposition 3.13 of \cite{enrichpolar}.

\begin{prop}\label{prop:geng} 

\ 

\begin{enumerate}
\item There exists a non-zero linear form $\tilde\call$ such that $\0\not\in\big|\Gamma_{f, \tilde\call}(\Fdot)\big|$ if and only if for generic linear $\tilde\call$, $\0\not\in\big|\Gamma_{f, \tilde\call}(\Fdot)\big|$.

\item For generic linear $\tilde\call$, 
$$\dim_\0 V(f)\cap \big|\Gamma_{f, \tilde\call}(\Fdot)\big|\leq 0 \hskip 0.2in \textnormal{and}\hskip 0.2in \dim_\0 V(\tilde\call)\cap \big|\Gamma_{f,\tilde\call}(\Fdot)\big|\leq 0.$$
\end{enumerate}
\end{prop}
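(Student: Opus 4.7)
The plan is to work in coordinates and identify each linear form $\tilde\call(z) = c_0 z_0 + \cdots + c_n z_n$ with its coefficient vector $c = (c_0, \ldots, c_n) \in \C^{n+1}$, so that $\im d\tilde\call = \U \times \{c\}$ under $T^*\U \cong \U \times \C^{n+1}$. Write $\rho \colon T^*\U \to \C^{n+1}$ for the projection to the second factor and set $W := \big|(T^*_{f,\Fdot}\U)^\bullet\big| = \bigcup_S \overline{T^*_{f|_S}\U}$, the union over $S \in \strat(\Fdot)$ with $f|_S$ nonconstant. Then $\big|\Gamma_{f,\tilde\call}(\Fdot)\big| = \pi(W \cap \rho^{-1}(c))$, and by local finiteness of $\strat$ only finitely many $S$ matter near $\0$, so imposing a generic condition on $c$ for each of them suffices. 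Each $\overline{T^*_{f|_S}\U}$ is irreducible of dimension $n+2$ (the fiber over a generic $p \in S$ has dimension $n+2-d_S$), a fact used throughout.

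Part 1 is nearly tautological. Set $F_\0 := W \cap \pi^{-1}(\0) \subseteq T^*_\0\U \cong \C^{n+1}$; since each $\overline{T^*_{f|_S}\U}$ is a cone in the covector direction, $F_\0$ is a closed, conical, analytic subset of $\C^{n+1}$. The condition $\0 \in \big|\Gamma_{f,\tilde\call}(\Fdot)\big|$ is precisely $(\0,c) \in W$, i.e.\ $c \in F_\0$. Hence ``there exists a nonzero $\tilde\call$ with $\0 \notin \big|\Gamma_{f,\tilde\call}(\Fdot)\big|$'' is equivalent to $F_\0 \neq \C^{n+1}$, which (as $F_\0$ is a closed analytic subset) is equivalent to $F_\0$ being nowhere dense, which is equivalent to ``for generic $c$, $c \notin F_\0$''.

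For Part 2(a), fix $S$ and set $Y_S := \overline{T^*_{f|_S}\U} \cap \pi^{-1}(V(f))$. Since $V(f)$ is a union of strata and $f|_S$ nonconstant forces $S \not\subseteq V(f)$, we get $\overline{S} = \pi(\overline{T^*_{f|_S}\U}) \not\subseteq V(f)$, so $Y_S$ is a proper closed analytic subset of the irreducible $\overline{T^*_{f|_S}\U}$; in particular $\dim Y_S \leq n+1$. Generic-fiber dimension for $\rho|_{Y_S} \colon Y_S \to \C^{n+1}$ then gives $\dim \rho|_{Y_S}^{-1}(c) \leq 0$ (or empty) on a Zariski-dense open subset of $\C^{n+1}$. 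Since $V(f) \cap \pi\big(\overline{T^*_{f|_S}\U} \cap \rho^{-1}(c)\big) = \pi\big(\rho|_{Y_S}^{-1}(c)\big)$, the $S$-contribution to $V(f) \cap \big|\Gamma_{f,\tilde\call}(\Fdot)\big|$ has dimension $\leq 0$, and a finite intersection of generic conditions over the relevant strata near $\0$ proves Part 2(a).

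Part 2(b) follows the same template with $I := \{(x,c) \in \U \times \C^{n+1} : c_0 x_0 + \cdots + c_n x_n = 0\}$ in place of $\pi^{-1}(V(f))$, and the main obstacle is to verify $\overline{T^*_{f|_S}\U} \not\subseteq I$ for each relevant $S$. If it were contained, then at every smooth $p \in S$ and every $\eta$ in the fiber $(T_pS \cap \ker d_p\tilde f)^\perp$ one would have $\eta \cdot p = 0$, placing $p$ in the double annihilator $T_pS \cap \ker d_p\tilde f$. Thus $p \in T_pS$ on all of $S$, so the Euler vector field $\sum z_i \partial_{z_i}$ is tangent to $S$; its flow $p \mapsto e^t p$ then preserves $S$, making $S$ a punctured cone through $\0$. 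Moreover $d_p\tilde f(p) = 0$ on $S$ says $\tilde f$ has vanishing radial derivative along $S$, so along each line $\C^* p \subseteq S$ the function $\tilde f$ is constant, and continuity at $\0$ with $\tilde f(\0) = 0$ forces $\tilde f|_S \equiv 0$ — contradicting nonconstancy of $f|_S$. With $\dim K_S \leq n+1$ where $K_S := \overline{T^*_{f|_S}\U} \cap I$, the generic fiber argument of (a) applies verbatim to conclude Part 2(b).
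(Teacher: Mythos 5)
The paper does not prove this proposition internally---it is quoted from \cite{enrichpolar} (as Proposition~3.13 there)---so there is no in-text proof to compare against; I assess your argument on its own terms.

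Your overall strategy is sound: identify each linear form $\tilde\call$ with its coefficient vector $c\in\C^{n+1}$, note that $\big|\Gamma_{f,\tilde\call}(\Fdot)\big|=\pi\big(W\cap\rho^{-1}(c)\big)$ where $W=\big|\big(T^*_{f,\Fdot}\U\big)^\bullet\big|$, and reduce everything to dimension counting on the conic sets $W\cap\pi^{-1}(\0)$, $W\cap\pi^{-1}(V(f))$, and $W\cap I$. Part~1 is indeed immediate: a proper closed analytic subset of $\C^{n+1}$ is automatically nowhere dense, so conicality is not even needed there. Part~2(a) works because $f_{|_S}$ nonconstant forces $\overline{S}\not\subseteq V(f)$, so $Y_S$ is a proper analytic subset of the irreducible $(n+2)$-dimensional $\overline{T^*_{f_{|_S}}\U}$. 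The key reduction in Part~2(b)---that $\overline{T^*_{f_{|_S}}\U}\subseteq I$ forces $p\in T_pS$ and $d_p\tilde f(p)=0$ at generic (and hence, by closedness, all) $p\in S$---is correct and is a nice self-contained observation.

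Two steps should be tightened. First, the ``generic-fiber dimension'' step in Parts~2(a) and~2(b) is not automatic for the non-proper projection $\rho_{|_{Y_S}}$: the locus of $c$ with a positive-dimensional fiber is, a priori, only a set-theoretic image of an analytic set, and non-proper images need not be analytic. Make the conicality of $Y_S$ (resp.\ $K_S$) do real work: restrict to a compact ball in $\U$, pass to the compact projectivization in $\overline{\ob}\times\Proj^n$ (dimension $\leq n$), and apply Remmert's proper mapping theorem to the projection to $\Proj^n$; this produces a genuine proper analytic bad locus of dimension $\leq n-1$ in $\Proj^n$, whose affine cone is the analytic bad set of $c$'s. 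Second, the assertions that the Euler flow ``preserves $S$, making $S$ a punctured cone through $\0$'' and that $\C^*p\subseteq S$ are too strong: a flow tangent to a non-closed stratum can exit it, and $\0$ need not lie in $\overline{S}$ at all. What you actually need survives by the identity theorem: having established $q\in T_qS$ and $d_q\tilde f(q)=0$ for \emph{all} $q\in S$, fix $p\in S$, note that $A:=\{u\in\C^*:up\in S\}$ is open (tangency of the Euler field makes the flow stay in $S$ locally) and nonempty, and set $g(u):=\tilde f(up)$, which is holomorphic on a disk about $0$ containing $1$; on $A$ one has $g'(u)=u^{-1}\,d_{up}\tilde f(up)=0$, so $g'\equiv 0$, hence $g$ is constant, and $g(0)=\tilde f(\0)=0$ forces $g\equiv 0$, so $\tilde f(p)=0$. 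Thus $\tilde f_{|_S}\equiv 0$, contradicting nonconstancy of $f_{|_S}$. With these two repairs the proof is complete.
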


\medskip

\begin{exm}\label{exm:hardergecc} In this example, we will calculate another graded, enriched characteristic cycle. We shall use this as a basis for the next two examples, in which we calculate a graded, enriched relative conormal cycle and a graded, enriched relative polar cycle.

Let $f:\C^3\rightarrow\C$ be given by $f(x, y, t) =y(y^2-x^3-t^2x^2)$, and let 
$$X:=V(f)=V(y)\cup V(y^2-x^3-t^2x^2).$$
 The singular set of $X$, $\Sigma X$, is the $1$-dimensional set $V(x, y)\cup V(x+t^2, y)$.  Thus, near the origin (actually, in this specific example, globally), 

\smallskip

$\strat:=\{V(y)- V(y^2-x^3-t^2x^2), V(y^2-x^3-t^2x^2)-V(y), \hfill$

$\hfill V(x, y)-\{\0\},  V(x+t^2, y)-\{\0\}, \{\0\}\}$

\smallskip

\noindent is a Whitney stratification of $X$ with connected strata. Let $\Fdot:=\Z^\bullet_X[2]$.  We want to calculate the graded, enriched characteristic cycle of $\Fdot$.
 
\smallskip

First, consider the $2$-dimensional strata. Let $S_1:=V(y)- V(y^2-x^3-t^2x^2)$. Then, $\N_{S_1}$ is simply a point, and $\cL_{S_1}$ is empty. Hence, 
$$H^{k-2}(\N_{S_1}, \cL_{S_1}; \Fdot)=H^{k}(\N_{S_1}, \cL_{S_1}; \Z)$$
 isomorphic to $\Z$ if $k=0$, and is $0$ if $k\neq 0$. The same conclusion holds if $S_1$ is replaced by $S_2:=V(y^2-x^3-t^2x^2)-V(y)$.

Now, consider the $1$-dimensional strata. Let $S_3:= V(x, y)-\{\0\}$, and $S_4:=V(x+t^2, y)-\{\0\}$. The normal slice $\N_{S_3}$ is, as a germ, up to analytic isomorphism, three complex lines in $\C^2$, which intersect at a point, and $\cL_{S_3}$ is three points. Similarly, the normal slice $\N_{S_4}$ is, as a germ, up to analytic isomorphism, two complex lines in $\C^2$, which intersect at a point, and $\cL_{S_4}$ is two points. Hence, $H^{k-1}(\N_{S_3}, \cL_{S_3}; \Fdot)=H^{k+1}(\N_{S_3}, \cL_{S_3}; \Z)$ isomorphic to $\Z^2$ if $k=0$, and is $0$ if $k\neq 0$. Similarly, $H^{k-1}(\N_{S_4}, \cL_{S_4}; \Fdot)=H^{k+1}(\N_{S_4}, \cL_{S_4}; \Z)$ isomorphic to $\Z$ if $k=0$, and is $0$ if $k\neq 0$.

Finally, consider the stratum $\{\0\}$. Then, $\N_{\{\0\}}$ is all of $X$, intersected with a small ball around the origin. The complex link $\cL_{\{\0\}}$ is usually referred to as simply the complex link of $X$ at $\0$. Thus, $\cL_{\{\0\}}$ has the homotopy-type of a bouquet of $1$-spheres (see \cite{levan}), and the number of spheres in this bouquet is equal to the intersection number $(\Gamma^1_{f, L}\cdot V(L))_\0$, where $L$ is any linear form such that $d_\0L$ is not a degenerate covector from strata of $X$ at $0$ (see \cite{stratmorse}), and the relative polar curve here is the classical one of L\^e, Hamm, and Teissier. We claim that we may use $L:=t$ for this calculation.

To see this, first note that $V(y^2-x^3-t^2x^2)$ is the classic example of a space such that the regular part satisfies Whitney's condition (a) along the $t$-axis (or, alternatively, this is an easy exercise). Thus, $d_\0t$ is not a limit of conormals from $S_2$. Now, the closures of $S_1$, $S_3$, and $S_4$ are all smooth, and $d_\0t$ is not conormal to these closures at the origin.

To find the ordinary cycle $\Gamma^1_{f, t}$, we take the components of the cycle below which are {\bf not} contained in $\Sigma f$:
$$
V\left(\frac{\partial f}{\partial x}, \frac{\partial f}{\partial y}\right) =$$
$$ V(y(-3x^2-2t^2x), 3y^2-x^3-t^2x^2)=V(y, x^2(x+t^2)) + V(x(3x+2t^2), 3y^2-x^3-t^2x^2)=
$$
$$
2V(x, y)+V(x+t^2, y) +2V(x, y) + V(3x+2t^2, 3y^2-x^3-t^2x^2).
$$

\smallskip

\noindent Thus, $\Gamma^1_{f, t} = V(3x+2t^2, 3y^2-x^3-t^2x^2)$, and 
$$(\Gamma^1_{f, t}\cdot V(t))_\0 = [V(3x+2t^2, 3y^2-x^3-t^2x^2, t)]_\0 = 2,$$
and $H^{k-0}(\N_{\{\0\}}, \cL_{\{\0\}}; \Fdot)=H^{k+2}(\N_{\{\0\}}, \cL_{\{\0\}}; \Z)$ is isomorphic to $\Z^2$ if $k=0$, and is $0$ if $k\neq 0$.

\smallskip

Therefore, we find that $\gecc^k(\Fdot)= 0$ if $k\neq 0$, and 
$$
\gecc^0(\Fdot) = \Z\left[\overline{T^*_{S_1}\C^3}\right]+ \Z\left[\overline{T^*_{S_2}\C^3}\right] +\Z^2\left[\overline{T^*_{S_3}\C^3}\right]+ \Z\left[\overline{T^*_{S_4}\C^3}\right]+\Z^2 \left[T^*_{\{\0\}}\C^3\right].
$$
\end{exm}

\begin{exm}\label{exm:gercc} We continue with the setting of \exref{exm:hardergecc}, where $$X=V(y)\cup V(y^2-x^3-t^2x^2)$$
 and $\Fdot=\Z^\bullet_X[2]$. We had Whitney strata consisting of $\{\0\}$, 
 $$S_1=V(y)- V(y^2-x^3-t^2x^2), \  \ S_2=V(y^2-x^3-t^2x^2)-V(y),$$
 $$S_3= V(x, y)-\{\0\}\ \textnormal{ and } \ S_4=V(x+t^2, y)-\{\0\}.$$

We found that
$\gecc^k(\Fdot)= 0$ if $k\neq 0$, and 
$$
\gecc^0(\Fdot) = \Z\left[\overline{T^*_{S_1}\C^3}\right]+ \Z\left[\overline{T^*_{S_2}\C^3}\right] +\Z^2\left[\overline{T^*_{S_3}\C^3}\right]+ \Z\left[\overline{T^*_{S_4}\C^3}\right]+\Z^2 \left[T^*_{\{\0\}}\C^3\right].
$$
We will calculate $\big(T^*_{{}_{x,\Fdot}}\C^3\big)^\bullet$.

\smallskip

As we said above, we identify $T^*\C^3$ with $\C^3\times\C^3$, and will use coordinates $(w_0, w_1, w_2)$ for cotangent coordinates, so that $(w_0, w_1, w_2)$ represents $w_0dx+w_1dy+w_2dt$.

\medskip

Since $x$ is identically zero on $\{\0\}$ and $S_3$, these two strata are not used in the calculation of $\big(T^*_{{}_{x,\Fdot}}\C^3\big)^\bullet$. For the $1$-dimensional stratum $S_4$, $\left[\overline{T^*_{x_{|_{S_4}}}\C^3}\right]$ is the $4$-dimensional cycle $V(x+t^2,y)\subseteq \C^3\times\C^3$.

\medskip

The fiber of $T^*_{x_{|_{S_1}}}\C^3$ over any $p\in S_1$ is  
$$(T^*_{S_1}\C^3)_p+<d_px>:=\{\omega+ad_px\ |\ \omega\in (T^*_{S_1}\C^3)_p, a\in\C\}=\{bd_py+ad_px\ |\ a,b\in\C\}.$$ 
Hence, $\left[\overline{T^*_{x_{|_{S_1}}}\C^3}\right]= V(y, w_2)$.

\medskip

The fiber of $T^*_{x_{|_{S_2}}}\C^3$ over any $p\in S_2$ which is a regular point of $x$ restricted to $S_2$ is
$$(T^*_{S_2}\C^3)_p+<d_px>=\{\omega+ad_px\ |\ \omega\in (T^*_{S_2}\C^3)_p, a\in\C\}=$$
$$\{b\big((-3x^2-2t^2x)d_px+2yd_py-2tx^2d_pt\big)+ad_px\ |\ a,b\in\C\}.$$
The form $w_0d_px+w_1d_py+w_2d_pt$ is in this set if and only if the determinant of the following matrix is $0$:
$$\left[\begin{matrix}w_0 & w_1 & w_2\\ -3x^2-2t^2x & 2y & -2tx^2\\ 1 & 0 & 0
\end{matrix}\right],$$
i.e., if and only if $yw_2+tx^2w_1=0$.
It is tempting to conclude that 
$$\left[\overline{T^*_{x_{|_{S_2}}}\C^3}\right] \ = \ V(y^2-x^3-t^2x^2, yw_2+tx^2w_1),$$ 
but this is not the case; we must eliminate any components of 
$$V(y^2-x^3-t^2x^2, yw_2+tx^2w_1)$$
 which are contained in $V(y)$. Obviously, $V(y^2-x^3-t^2x^2, yw_2+tx^2w_1)$ is purely $4$-dimensional, and one easily shows that any component contained in $V(y)$ must, in fact, equal $V(x,y)$ (on the level of sets). Thus, we need to remove any components of $V(y^2-x^3-t^2x^2, yw_2+tx^2w_1)$  which are contained in $V(x,y)$.

Our notation for the resulting scheme (a gap sheaf, see \cite{numcontrol}, I.1) is 
$$V(y^2-x^3-t^2x^2, yw_2+tx^2w_1)\lnot V(x,y).$$
Note that, as schemes,
$$V(y^2-x^3-t^2x^2, yw_2+tx^2w_1)= V(y^2-x^3-t^2x^2, yw_2+tx^2w_1, y^2w_2+ytx^2w_1)=$$
$$
V(y^2-x^3-t^2x^2, yw_2+tx^2w_1, (x^3+t^2x^2)w_2+ytx^2w_1).
$$
Note the $x^2$ factor of the last polynomial listed above, and note that, in the analytic set above, if $x=0$, then $y$ must be $0$, i.e., if a point has $x=0$, the point must be in $V(x,y)$.
Hence, using \cite{numcontrol}, I.1.3.iv, we find that, as cycles, 
$$V(y^2-x^3-t^2x^2, yw_2+tx^2w_1)\lnot V(x,y) =$$
$$ V(y^2-x^3-t^2x^2, yw_2+tx^2w_1, (x+t^2)w_2+ytw_1).$$
(This last equality need not be true on the level of schemes, since our generators do not form a regular sequence and, hence, there may be embedded subvarieties.)

Therefore, we find that $\big(T^*_{{}_{x,\Fdot}}\C^3\big)^k$ is $0$ unless $k=0$, and 

\noindent $\big(T^*_{{}_{x,\Fdot}}\C^3\big)^0=\hfill$
$$\Z[V(y, w_2)]\ +\ \Z[V(y^2-x^3-t^2x^2, yw_2+tx^2w_1, (x+t^2)w_2+ytw_1)]\ +\ \Z[V(x+t^2, y)].
$$
\end{exm}

\medskip

\begin{exm}\label{ex:gerpc} We continue with our setting from \exref{exm:hardergecc} and \exref{exm:gercc}, and consider $X=V(y)\cup V(y^2-x^3-t^2x^2)$ and $\Fdot=\Z^\bullet_X[2]$. We will calculate $\big(\Gamma^1_{x, t}(\Fdot)\big)^\bullet$.

Using the isomorphism $T^*\C^3\cong \C^3\times\C^3$ from \exref{exm:gercc}, $\im dt$ is the scheme 
$$V\left(w_0-\frac{\partial t}{\partial x}, \ w_1-\frac{\partial t}{\partial y}, \ w_2-\frac{\partial t}{\partial t} \right)=V(w_0, w_1, w_2-1).$$

In \exref{exm:gercc}, we found that $\big(T^*_{{}_{x,\Fdot}}\C^3\big)^k$ is $0$ unless $k=0$, and 

\medskip

$\big(T^*_{{}_{x,\Fdot}}\C^3\big)^0=\Z[V(y, w_2)]+\hfill$

$\hfill\Z[V(y^2-x^3-t^2x^2, yw_2+tx^2w_1, (x+t^2)w_2+ytw_1)]\ +\ \Z[V(x+t^2, y)].$

\medskip

Let 
$$E \ = \ V(y^2-x^3-t^2x^2, yw_2+tx^2w_1, (x+t^2)w_2+ytw_1),$$ 
on the level of cycles, throughout the remainder of this example.

Thus, $\big(\Gamma^1_{x, t}(\Fdot)\big)^k$ is $0$ unless $k=0$ and, to calculate $\big(\Gamma^1_{x, t}(\Fdot)\big)^0$, we need first to calculate the three ordinary cycles
$$\pi_*\big(V(y, w_2)\cdot V(w_0, w_1, w_2-1)\big),$$
$$\pi_*\big(E\cdot V(w_0, w_1, w_2-1)\big),$$
and
$$\pi_*\big(V(x+t^2, y)\cdot V(w_0, w_1, w_2-1)\big).$$

Now, $V(y, w_2)\cap V(w_0, w_1, w_2-1)=\emptyset$, and so $\pi_*\big(V(y, w_2)\cdot V(w_0, w_1, w_2-1)\big)=0$. In addition, it is trivial that there is an equality of cycles 
$$\pi_*\big(V(x+t^2, y)\cdot V(w_0, w_1, w_2-1)\big)= V(x+t^2, y).$$ However, the remaining cycle is more difficult to calculate.

\smallskip

It is trivial that, as sets, 
$$E\cap V(w_0, w_1, w_2-1)= V(x+t^2, y, w_0, w_1, w_2-1), 
$$
but the difficulty in calculating 
$$\pi_*\big(E\cdot V(w_0, w_1, w_2-1)\big)$$
is due to the fact that $y^2-x^3-t^2x^2, yw_2+tx^2w_1, (x+t^2)w_2+ytw_1$ is not a regular sequence. To ``fix'' this, note that, in \exref{exm:gercc}, we saw that, as cycles, there is an equality
$$
V(y^2-x^3-t^2x^2, yw_2+tx^2w_1)= C+E,
$$
where the underlying set $|C|=V(x, y)$. Therefore,
$$
C\cdot V(w_0, w_1, w_2-1) + E\cdot V(w_0, w_1, w_2-1) = $$
$$V(y^2-x^3-t^2x^2, yw_2+tx^2w_1)\cdot V(w_0, w_1, w_2-1) = 
$$
$$
V(y^2-x^3-t^2x^2, yw_2+tx^2w_1, w_0, w_1, w_2-1)= V(x^2(x+t^2), y, w_0, w_1, w_2-1)=
$$
$$
2V(x,y,w_0, w_1, w_2-1)+ V(x+t^2, y, w_0, w_1, w_2-1).
$$
Thus, as cycles,
$$
E\cdot V(w_0, w_1, w_2-1) = V(x+t^2, y, w_0, w_1, w_2-1),
$$
and so $\pi_*(E\cdot V(w_0, w_1, w_2-1))= V(x+t^2, y)$.

Finally, we find that
$$
\big(\Gamma^1_{x, t}(\Fdot)\big)^0= \pi_*^0\left((T^*_{{}_{f,\Fdot}}\U\big)^\bullet\odot \im dt\right) =$$ $$
\Z[V(x+t^2, y)]+  \Z[V(x+t^2, y)] =  \Z^2[V(x+t^2, y)].
$$
\end{exm}

\bigskip

We shall discuss the main results on the graded, enriched relative conormal cycle and the graded, enriched relative polar curve in the following sections.

\section{The Nearby Cycles}\label{sec:nearby}

The following theorem was our primary motivation for defining the graded, enriched conormal cycle. While we state the theorem in the elegant form given in \cite{singenrich}, this theorem, in terms of ordinary cycles,  is essentially contained in \cite{bmm}

\bigskip

\begin{thm}\label{thm:psigecc} {\rm (\cite{singenrich}, Theorem 3.3)} There is an equality of graded enriched cycles given by
$$
\gecc^\bullet(\psi_f[-1]\Fdot) \ = \ \big(T^*_{{}_{f, \Fdot}}\U\big)^\bullet\odot(V(f)\times\C^{n+1}).
$$ 
\end{thm}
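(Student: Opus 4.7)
The plan is to verify the equality coefficient-by-coefficient on each conormal variety. Both sides are graded enriched Lagrangian cycles supported over $V(f)$: the LHS manifestly, and the RHS because each $\overline{T^*_{f|_S}\U}$ is an $(n{+}2)$-dimensional subvariety of $T^*\U$ which meets the Cartier divisor $V(f)\times\C^{n+1}$ properly whenever $f|_S\not\equiv\mathrm{const.}$, so the resulting intersection is an $(n{+}1)$-dimensional Lagrangian cycle supported in $T^*\U|_{V(f)}$ and thus a sum of conormal cycles $[\overline{T^*_T\U}]$ for analytic subvarieties $T\subseteq V(f)$. After refining $\strat$ so that $V(f)$ is a union of strata, it suffices to match the $[\overline{T^*_T\U}]$-coefficient of both sides for each such $T$ and each degree $k$.

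Fix $T\subseteq V(f)$, a point $\p\in T$, and a function $\tilde g$ defined near $\p$ with $d_\p\tilde g$ nondegenerate with respect to $\strat$ and $\p$ a nondegenerate critical point of $\tilde g|_T$. By \remref{rem:morsemod}, the $[\overline{T^*_T\U}]$-coefficient of $\gecc^k(\psi_f[-1]\Fdot)$ equals $H^k(\phi_g[-1]\psi_f[-1]\Fdot)_\p$, whereas the corresponding coefficient of the RHS in degree $k$ is $\bigoplus_S (m_S^k(\Fdot))^{\gamma_{S,T}}$, where $\gamma_{S,T}$ is the intersection multiplicity of $\overline{T^*_{f|_S}\U}$ with $V(f)\times\C^{n+1}$ at a generic point of $\overline{T^*_T\U}$ and $S$ runs over $\Fdot$-visible strata with $f|_S$ non-constant. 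The theorem therefore reduces to the module isomorphism
$$
H^k\bigl(\phi_g[-1]\psi_f[-1]\Fdot\bigr)_\p \;\cong\; \bigoplus_S \bigl(m_S^k(\Fdot)\bigr)^{\gamma_{S,T}}.
$$
I would establish this by a stratified Morse argument on the Milnor fiber of $f$ at $\p$: for $0<|\delta|\ll\epsilon\ll 1$, the stalk of $\psi_f[-1]\Fdot$ at $\p$ is the hypercohomology of $\Fdot$ on an $\epsilon$-ball around $\p$ intersected with $f^{-1}(\delta)$, and applying $\phi_g[-1]$ localizes to the stratified critical points of $\tilde g$ on this Milnor fiber. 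The critical points lying on a stratum $S$ are exactly the points of $f^{-1}(\delta)\cap\bigl|\Gamma_{f,\tilde g}(S)\bigr|$ close to $\p$, and by the description of $|\Gamma_{f,\tilde g}(S)|$ as $\pi\bigl(\overline{T^*_{f|_S}\U}\cap\im d\tilde g\bigr)$ their count with multiplicity is precisely $\gamma_{S,T}$. Because $d\tilde g$ is nondegenerate with respect to $\strat$, a transverse slice at each critical point exhibits $\Fdot$ as equivalent (up to shift) to the constant complex with stalk $m_S^\bullet(\Fdot)$, so each such critical point contributes a copy of $m_S^\bullet(\Fdot)$, and summing yields the displayed isomorphism.

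The main obstacle is this last Morse-theoretic step: it is the enriched, module-level refinement of the classical Brian\c con-Maisonobe-Merle formula for $\cc(\psi_f\Fdot)$, in which one must track the isomorphism class of each local stalk contribution rather than merely its Euler characteristic. Granted the nondegeneracy of $\tilde g$ with respect to $\strat$, this refinement follows from the standard characterization of $m_S^\bullet(\Fdot)$ as the vanishing cycles of a Morse function on a normal slice to $S$; matching $[\overline{T^*_T\U}]$-coefficients stratum-by-stratum then completes the proof.
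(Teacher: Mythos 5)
The paper itself offers no proof of this statement: it is quoted from \cite{singenrich}, Theorem 3.3, so there is no internal argument to compare against. What can be assessed is whether your reduction and your final step would actually close the proof.

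Your reduction is sound: both sides are Lagrangian cycles supported over $T^*\U|_{V(f)}$, it suffices to match the $[\overline{T^*_T\U}]$-coefficient in each degree, the LHS coefficient is $m^k_T(\psi_f[-1]\Fdot)\cong H^k(\phi_g[-1]\psi_f[-1]\Fdot)_\p$ by \remref{rem:morsemod}, and unwinding \defref{def:basicenrich} and \defref{def:relconorm} does identify the RHS coefficient as $\bigoplus_S m^k_S(\Fdot)\otimes R^{\gamma_{S,T}}$. Two things are missing, one minor and one genuine.

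The minor one: you assert that the number of stratified critical points of $\tilde g$ on $S\cap f^{-1}(\delta)$ near $\p$, counted with multiplicity, \emph{is} $\gamma_{S,T}$. As defined, $\gamma_{S,T}$ is the coefficient of $[\overline{T^*_T\U}]$ in the $(n{+}1)$-cycle $[\overline{T^*_{f|_S}\U}]\cdot[V(f)\times\C^{n+1}]$, while the critical-point count is $\big(\Gamma_{f,\tilde g}(S)\cdot V(\tilde f)\big)_\p=\big(\pi_*\big(\overline{T^*_{f|_S}\U}\cdot\im d\tilde g\big)\cdot V(\tilde f)\big)_\p$. Matching these is a short projection-formula computation in the cotangent space (using that $(\p,d_\p\tilde g)$ is a smooth, transverse point of $\overline{T^*_T\U}\cap\im d\tilde g$), but it does need to be written down: the two intersection numbers are not the same by definition.

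The genuine gap is in the final sentence, where you pass from ``each critical point contributes a copy of $m^\bullet_S(\Fdot)$'' to ``summing yields the displayed isomorphism.'' Stratified Morse theory on the Milnor fibre of $f$ gives a filtration of $H^\bullet(\phi_g[-1]\psi_f[-1]\Fdot)_\p$ whose associated graded pieces are the modules $m^\bullet_S(\Fdot)$ (one per critical point), but the theorem asserts a \emph{direct sum} decomposition, i.e.\ that this filtration splits. That splitting is not a consequence of the nondegeneracy of $\tilde g$ or of the characterization of $m^\bullet_S(\Fdot)$ via vanishing cycles on a normal slice; those tell you what the subquotients are, not that the extensions vanish. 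The splitting is exactly the content of the result quoted as \thmref{thm:vanpsi} (Theorem 4.2 of \cite{hypercohom}), which is a nontrivial theorem in its own right — indeed, it is the module-level upgrade of Brian\c con--Maisonobe--Merle that you correctly identify as the crux. Your proposal reproves that statement at a generic point of $T$ but without the argument that actually produces the splitting, so as written it establishes only the Euler-characteristic version $\cc(\psi_f[-1]\Fdot)=\cc$ of the right-hand side, not the graded, enriched equality. To close the proof you either need to cite \thmref{thm:vanpsi} after a normal-slicing reduction of $T$ to a point stratum (and then verify that normal slicing is compatible with the cycle intersection $\odot\,(V(f)\times\C^{n+1})$), or you need to supply the splitting argument directly, for instance via the analysis of the monodromy/geometry of the polar curve that underlies the proof in \cite{hypercohom}.
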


\medskip

If one knows the irreducible components $\{V_j\}_j$ of the underlying set $\ms(\psi_f[-1]\Fdot)$, then, by selecting a generic point of each $V_j$, and taking a normal slice, the calculation of $\gecc^\bullet(\psi_f[-1]\Fdot)$  is reduced to calculating the Morse modules of point strata. In other words, if we know $\ms(\psi_f[-1]\Fdot)$, then, by taking normal slices, the calculation of $\gecc^\bullet(\psi_f[-1]\Fdot)$ reduces to calculating $m^k_\0(\psi_f[-1]\Fdot)$, i.e., $H^k(\phi_{\call}[-1]\psi_f[-1]\Fdot)_\0$, where $\call$ is the restriction to $V(f)$ of a generic linear form $\tilde\call$.

\bigskip

The next result follows from Theorem 4.2 of \cite{hypercohom}, but is stated as in Theorem 3.12 of \cite{enrichpolar}.

\begin{thm}\label{thm:vanpsi}
$$
m^k_\0(\psi_f[-1]\Fdot)\cong \left(\big(\Gamma^1_{f, \tilde\call}(\Fdot)\big)^k\odot V(f)\right)_\0 =\bigoplus_{\substack{S\in\strat(\Fdot)\\ S\not\subseteq V(f)}}m^k_S(\Fdot)\otimes R^{\alpha_S},
$$
where $\alpha_S:= \left(\Gamma^1_{f,\tilde\call}(S)\cdot V(\tilde f)\right)_\0$
and $\tilde\call$ is a generic linear form. Specifically, the amount of genericity that we need is that $(\0, d_\0\tilde \call)\not\in \overline{\ms(\psi_f[-1]\Fdot)-T^*_\0\U}$, which is equivalent to $\dim_\0 |\big(\Gamma^1_{f, \tilde\call}(\Fdot)\big)^\bullet|\cap V(f)\leq 0$.
\end{thm}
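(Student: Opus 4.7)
The plan is to combine \thmref{thm:psigecc}, which identifies $\gecc^\bullet(\psi_f[-1]\Fdot)$ with an intersection product involving $\big(T^*_{{}_{f,\Fdot}}\U\big)^\bullet$, with the microlocal index principle that extracts the Morse module at a point stratum by intersecting the gecc with the image of a generic cotangent section.

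Set $\Gdot:=\psi_f[-1]\Fdot$. By \remref{rem:morsemod}, $m^k_\0(\Gdot)\cong H^k(\phi_\call[-1]\Gdot)_\0$ for a generic linear $\call$. The substantive input, essentially Theorem~4.2 of \cite{hypercohom}, is the intersection-theoretic index formula: once $\tilde\call$ is chosen so that $(\0,d_\0\tilde\call)$ is isolated in $\ms(\Gdot)\cap\im d\tilde\call$, one has
$$
H^k(\phi_\call[-1]\Gdot)_\0 \ \cong\ \big(\gecc^k(\Gdot)\odot \im d\tilde\call\big)_{(\0,d_\0\tilde\call)}\ =\ \pi^k_*\big(\gecc^\bullet(\Gdot)\odot\im d\tilde\call\big)_\0,
$$
the second equality holding because $\pi$ restricts to an isomorphism on $\im d\tilde\call$ and hence preserves intersection multiplicities. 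The equivalence of this genericity hypothesis with $\dim_\0|\big(\Gamma^1_{f,\tilde\call}(\Fdot)\big)^\bullet|\cap V(f)\leq 0$ is immediate from the set-level form of \thmref{thm:psigecc}: $\ms(\Gdot)$ lies over $V(f)$, and the polar set is by construction $\pi\big(|\big(T^*_{{}_{f,\Fdot}}\U\big)^\bullet|\cap\im d\tilde\call\big)$.

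Now substitute \thmref{thm:psigecc} and rewrite $V(f)\times\C^{n+1}$ as the divisor $V(f\circ\pi)$ in $T^*\U$. By associativity of $\odot$,
$$
\gecc^\bullet(\Gdot)\odot\im d\tilde\call \ = \ \big(T^*_{{}_{f,\Fdot}}\U\big)^\bullet \odot\im d\tilde\call \odot V(f\circ\pi).
$$
The dimension hypothesis guarantees that no component of $\big(T^*_{{}_{f,\Fdot}}\U\big)^\bullet\odot\im d\tilde\call$ projects into $V(f)$ near $\0$, so $f\circ\pi$ does not vanish on any such component; hence \propref{prop:properpush}, applied with $\tau=\pi_{|_{\im d\tilde\call}}$ and $g=f$, gives
$$
\pi^\bullet_*\big(\big(T^*_{{}_{f,\Fdot}}\U\big)^\bullet\odot\im d\tilde\call\odot V(f\circ\pi)\big) \ =\ \pi^\bullet_*\big(\big(T^*_{{}_{f,\Fdot}}\U\big)^\bullet\odot\im d\tilde\call\big)\odot V(f) \ =\ \big(\Gamma^1_{f,\tilde\call}(\Fdot)\big)^\bullet\odot V(f),
$$
the last equality being \defref{def:polarcurve}. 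Extracting the component at $\0$ in degree $k$ produces the first displayed isomorphism of the theorem; unpacking \defref{def:polarcurve} further --- replacing each $\big(\Gamma_{f,\tilde\call}(S)\big)^{\operatorname{enr}}\odot V(f)$ at $\0$ with $R^{\alpha_S}$ --- produces the second. The condition ``$f_{|_S}$ not constant'' agrees with ``$S\not\subseteq V(f)$'' because $\strat$ was refined so that $V(f)$ is a union of strata.

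The only nontrivial step is the intersection-index formula invoked in the second paragraph; once that is in hand, the remainder is a purely formal manipulation using the enriched projection formula and the definition of the relative polar curve. The main obstacle is therefore microlocal --- verifying that, for generic $\tilde\call$, each positive-dimensional stratum contributes to the vanishing-cycle stalk only through its local intersection multiplicity with $\im d\tilde\call$ at $(\0,d_\0\tilde\call)$, with no stray contributions from the rest of $\ms(\Gdot)$.
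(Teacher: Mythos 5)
Your argument is correct, and since the paper does not actually prove \thmref{thm:vanpsi} --- it is cited as following from Theorem 4.2 of \cite{hypercohom} and being stated as in Theorem 3.12 of \cite{enrichpolar} --- your derivation is genuinely new relative to the text. You obtain \thmref{thm:vanpsi} as a formal consequence of \thmref{thm:psigecc} together with the projection formula and the definition of the relative polar curve. The chain of equalities ($\gecc^\bullet(\psi_f[-1]\Fdot)\odot\im d\tilde\call$ rewritten via \thmref{thm:psigecc}, then $V(f)\times\C^{n+1}$ identified with $V(f\circ\pi)$, the dimension hypothesis ensuring the projection formula applies, and the push-forward equaling $\big(\Gamma^1_{f,\tilde\call}(\Fdot)\big)^\bullet\odot V(f)$) is correct, and the bookkeeping on strata ($f_{|_S}$ non-constant versus $S\not\subseteq V(f)$, for strata whose closure contains $\0$) is handled properly.

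One attribution is slightly off, and the correction actually makes your argument stronger. You call the ``intersection-theoretic index formula''
$$
m^k_\0(\Gdot)\ \cong\ \big(\gecc^k(\Gdot)\odot \im d\tilde\call\big)_{(\0,d_\0\tilde\call)}
$$
the ``substantive input, essentially Theorem~4.2 of \cite{hypercohom}.'' In fact this equality is essentially a tautology given the genericity hypothesis: $\gecc^k(\Gdot)=\sum_{S'}m^k_{S'}(\Gdot)[\overline{T^*_{S'}\U}]$, the covector $(\0,d_\0\tilde\call)$ lies on no component of $\ms(\Gdot)$ other than $T^*_\0\U$ (this is exactly what $(\0,d_\0\tilde\call)\notin\overline{\ms(\Gdot)-T^*_\0\U}$ says), and $T^*_\0\U$ meets $\im d\tilde\call$ transversally at $(\0,d_\0\tilde\call)$ with multiplicity one, so the localized intersection returns exactly $m^k_\0(\Gdot)$, which by \remref{rem:morsemod} is $H^k(\phi_\call[-1]\Gdot)_\0$. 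The genuinely substantive input is \thmref{thm:psigecc}; once that is granted, no further microlocal theorem is required. What Theorem~4.2 of \cite{hypercohom} actually supplies --- and what the paper invokes --- is the polar-curve/vanishing-cycle identification proved directly via stratified Morse theory, not through \thmref{thm:psigecc}; the two routes are logically distinct, with yours being a formal consequence of a statement from \cite{singenrich} and the paper's citation being the presumably more primitive geometric proof from \cite{hypercohom}.
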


\bigskip

\begin{exm}\label{exm:phigecc} We continue with our example from \exref{exm:gercc}: 
$$X=V(y)\cup V(y^2-x^3-t^2x^2),$$ 
$\Fdot=\Z^\bullet_X[2]$, and we found that
$\big(T^*_{{}_{x,\Fdot}}\C^3\big)^k$ is $0$ unless $k=0$, and 

\smallskip

\noindent $\big(T^*_{{}_{x,\Fdot}}\C^3\big)^0=\Z[V(y, w_2)] +\hfill$

\smallskip

$\hfill\ \Z[V(y^2-x^3-t^2x^2, yw_2+tx^2w_1, (x+t^2)w_2+ytw_1)]\ +\ \Z[V(x+t^2, y)].$

\medskip

\noindent Let $\tilde f:= x$.

\bigskip

In light of \thmref{thm:psigecc}, we find that $\gecc^\bullet(\psi_f[-1]\Fdot)$ is concentrated in degree $0$, and that

\medskip

$\gecc^0(\psi_f[-1]\Fdot) =\Z[V(x, y, w_2)] +\Z^2[V(x, y, t)] +\hfill  {(\dagger)}$

\medskip

$\hfill \Z\big[V(y^2-x^3-t^2x^2, yw_2+tx^2w_1, (x+t^2)w_2+ytw_1)\cdot V(x)\big]^{\operatorname{enr}}.$

\medskip

\noindent The difficulty is in calculating the cycle $$E:=\left[V(y^2-x^3-t^2x^2, yw_2+tx^2w_1, (x+t^2)w_2+ytw_1)\cdot V(x)\right].$$

The underlying set $|E|$ is easily found to be $V(x, y, t)\cup V(x, y, w_2)$, and we may find the geometric multiplicity of each component in $E$ by moving to generic points. 

At a generic point of $V(x, y, t)$, $w_2\neq 0$ and, at such a point, one easily shows that there is an equality of ideals
$$
\langle y^2-x^3-t^2x^2, yw_2+tx^2w_1, (x+t^2)w_2+ytw_1\rangle = \langle yw_2+tx^2w_1, (x+t^2)w_2+ytw_1\rangle
$$
and, as $yw_2+tx^2w_1, (x+t^2)w_2+ytw_1$ is a regular sequence, one easily calculates that, at a point where $w_2\neq 0$, there are equalities of cycles
$$
\left[V(yw_2+tx^2w_1, (x+t^2)w_2+ytw_1)\cdot V(x)\right] = $$
$$
\left[V(yw_2+tx^2w_1, (x+t^2)w_2+ytw_1, x)\right] = [V(y, t^2, x)] = 2[V(x, y, t)].
$$

\smallskip

\noindent This is the component of $E$ with underlying set $V(x, y, t)$.

At a generic point of $V(x, y, w_2)$, neither $t$ nor $w_1$ is zero, and it follows that $x+t^2$ is not zero. At such a point, one easily shows that there is again an equality of ideals
$$
\langle y^2-x^3-t^2x^2, yw_2+tx^2w_1, (x+t^2)w_2+ytw_1\rangle = \langle yw_2+tx^2w_1, (x+t^2)w_2+ytw_1\rangle
$$
and one easily calculates that, at a point where neither $t$ nor $w_1$ is zero, there are equalities of cycles
$$
\left[V(yw_2+tx^2w_1, (x+t^2)w_2+ytw_1)\cdot V(x)\right] = \left[V(yw_2, tw_2+yw_1, x)\right] = 
$$
$$
[V(y, tw_2, x)]+[V(w_2, yw_1, x)]=[V(y, w_2, x)]+[V(w_2, y, x)] = 2[V(x, y, w_2)].
$$
This is the component of $E$ with underlying set $V(x, y, w_2)$.

\medskip

Therefore, $(\dagger)$ tells us that 
$$
\gecc^0(\psi_f[-1]\Fdot)=
\Z^3[V(x, y, w_2)]\ +\ \Z^4[V(x, y, t)].
$$

\medskip

Note that the component of $\gecc^0(\psi_f[-1]\Fdot)$ over the origin agrees with \thmref{thm:vanpsi} and our calculation in \exref{ex:gerpc}. For 
$$(\0, d_\0 t)\not\in \overline{\ms(\psi_f[-1]\Fdot)-T^*_\0\U}= V(x, y, w_2)
$$
and 
$$
\big(\Gamma^1_{x, t}(\Fdot)\big)^0\odot V(t)  =  \Z^2[V(x+t^2, y)]\odot V(t) = \Z^4[V(x, y, t)].
$$
\end{exm}

\bigskip

Another example, which we leave as an exercise for the reader, is to recalculate $\gecc^\bullet(\psi_f[-1]\Fdot)$ from \exref{exm:curvegecc} by using either \thmref{thm:psigecc} or \thmref{thm:vanpsi}.

\bigskip

Recall from \defref{def:numerical} that a complex of sheaves $\Pdot$ on $X$ is numerical if and only if $\Pdot$ is a perverse sheaf and all of its Morse modules are free, so that the ordinary characteristic cycle of $\Pdot$ carries all of the information about $\gecc^\bullet(\Pdot)$.

\medskip

The following corollary is immediate from \thmref{thm:psigecc}.
\begin{cor}\label{cor:numnearnum} If $\Pdot$ is numerical, then so is $\psi_f[-1]\Pdot$.
\end{cor}

\section{Hypersurface Complements and Restrictions}\label{sec:complrestr}

Let $i:X-V(f)\hookrightarrow X$  and $j:V(f)\hookrightarrow X$ denote the inclusions. Recall that we are assuming that $V(f)$ is a union of strata, and recall the partial ordering on isomorphism classes of finitely-generated $R$-modules given in \defref{def:basicenrich}.

\medskip

We would like to give an elegant formula for $\gecc^\bullet(i_!i^!\Fdot)$, something along the lines of what we gave for $\gecc^\bullet(\psi_f[-1]\Fdot)$ in \thmref{thm:psigecc}. We do not quite do this. However, we do the next best thing; we give a formula for the set $\ms(i_!i^!\Fdot)$ and a formula for $m^k_\0(i_!i^!\Fdot)$. 

Once we have these formulas, and so, in principle, know $\gecc^\bullet(i_!i^!\Fdot)$, we can use how the graded, enriched characteristic cycle works with Verdier duals to obtain $\gecc^\bullet(i_*i^*\Fdot)$. In addition, we can use the additivity of ordinary characteristic cycles over distinguished triangles, and the duality formula, to obtain the characteristic cycles of $j_*j^*[-1]\Fdot$ and $j_!j^![1]\Fdot$ when the base ring is a domain.

\medskip

The following result is immediate from Theorem 4.2 B of \cite{hypercohom} and, for ordinary cycles, is proved in \cite{bmm}.

\begin{thm}\label{thm:vanrestrext}
$$
m^k_\0(i_!i^!\Fdot)\cong \left(\big(\Gamma^1_{f, \tilde\call}(\Fdot)\big)^k\odot V(\tilde\call)\right)_\0=\bigoplus_{\substack{S\in\strat(\Fdot)\\ S\not\subseteq V(f)}}m^k_S(\Fdot)\otimes R^{\beta_S},
$$
where $\beta_S:= \left(\Gamma^1_{f,\tilde\call}(S)\cdot V(\tilde\call)\right)_\0$
and $\tilde\call$ is a generic linear form. Specifically, the amount of genericity that we need is that 

\begin{enumerate}
\item $(\0, d_\0\tilde \call)\not\in \overline{\ms(i_!i^!\Fdot)-T^*_\0\U}$ and 
\item $\dim_\0 |\big(\Gamma^1_{f, \tilde\call}(\Fdot)\big)^\bullet|\cap V(\call)\leq 0$.
\end{enumerate}
\end{thm}

\bigskip

\begin{cor}\label{cor:morserestrext} If $S\in\strat$ and $S\not\subseteq V(f)$, then $m^k_S(i_!i^!\Fdot)\cong m^k_S(\Fdot)$.

If $S\in\strat$ and $S\subseteq V(f)$, then $m^k_S(i_!i^!\Fdot)\neq 0$ if and only if $m^k_S(\psi_f[-1]\Fdot)\neq 0$, and $m^k_S(i_!i^!\Fdot)\leq m^k_S(\psi_f[-1]\Fdot)$.
\end{cor}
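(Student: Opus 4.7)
The plan is to handle the two assertions separately, using Theorems \ref{thm:vanrestrext} and \ref{thm:vanpsi} for the harder case.

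For the first assertion, since $V(f)$ is a union of strata and strata are disjoint, $S\not\subseteq V(f)$ forces $S\cap V(f)=\emptyset$. For any $p\in S$, a normal slice $\mathbb N_S$ and complex link $\mathbb L_S$ may be chosen inside a neighborhood of $p$ disjoint from $V(f)$; there $i_!i^!\Fdot\cong \Fdot$ as a complex of sheaves (since $X-V(f)$ contains $\mathbb N_S$, and $i$ is the inclusion of an open set). Consequently $m_S^k(i_!i^!\Fdot)=\hyp^{k-d_S}(\mathbb N_S,\mathbb L_S;\Fdot)=m_S^k(\Fdot)$.

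For the second assertion, suppose $S\subseteq V(f)$. I would first reduce to the point-stratum case via a normal slice at $p\in S$: if $N$ is transverse to $\strat$, then $X\cap N$ has $\{p\}$ as a stratum, $f':=f|_{X\cap N}$ vanishes at $p$, and both $i_!i^!$ (because $i_!$ and $i^!=i^*$ commute with pullback to closed submanifolds) and $\psi_f[-1]$ (by the standard transversality for nearby cycles) commute with restriction to $N$. Up to the shift by $d_S$, this reduces the claim to the corresponding statement at the point stratum $\{p\}$ of $X\cap N$.

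Thus we may assume $S=\{\0\}$. Choose $\tilde\call$ generic enough to satisfy the hypotheses of both Theorem~\ref{thm:vanrestrext} and Theorem~\ref{thm:vanpsi} and, in addition, to be transverse at $\0$ to each component of $\Gamma^1_{f,\tilde\call}(\Fdot)$ (all these conditions are simultaneously open and dense on linear forms by \propref{prop:geng} and the density of nondegenerate covectors). The two theorems yield
$$
m^k_\0(i_!i^!\Fdot)\ \cong\ \bigoplus_{\substack{T\in\strat(\Fdot)\\ T\not\subseteq V(f)}} m^k_T(\Fdot)\otimes R^{\beta_T},\qquad \beta_T:=\bigl(\Gamma^1_{f,\tilde\call}(T)\cdot V(\tilde\call)\bigr)_\0,
$$
$$
m^k_\0(\psi_f[-1]\Fdot)\ \cong\ \bigoplus_{\substack{T\in\strat(\Fdot)\\ T\not\subseteq V(f)}} m^k_T(\Fdot)\otimes R^{\alpha_T},\qquad \alpha_T:=\bigl(\Gamma^1_{f,\tilde\call}(T)\cdot V(\tilde f)\bigr)_\0.
$$
For each one-dimensional component $C$ of $\Gamma^1_{f,\tilde\call}(T)$ passing through $\0$, the transversality gives $(C\cdot V(\tilde\call))_\0=\operatorname{mult}_\0 C$, while $(C\cdot V(\tilde f))_\0\geq \operatorname{mult}_\0 C$ because $\tilde f(\0)=0$. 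Hence $\beta_T\le \alpha_T$, with both vanishing precisely when no component of $\Gamma^1_{f,\tilde\call}(T)$ passes through $\0$. Consequently $R^{\beta_T}$ is a free $R$-summand of $R^{\alpha_T}$, so $m^k_T(\Fdot)\otimes R^{\beta_T}$ is a summand of $m^k_T(\Fdot)\otimes R^{\alpha_T}$; summing over $T$ delivers $m^k_\0(i_!i^!\Fdot)\le m^k_\0(\psi_f[-1]\Fdot)$, and the stratum-by-stratum equivalences $\beta_T=0\iff\alpha_T=0$ give the non-vanishing equivalence.

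The main obstacle is the normal-slice reduction: one must check that $i_!i^!$ and $\psi_f[-1]$ behave compatibly with a generic transverse slice through $p\in S$, so that Theorems~\ref{thm:vanrestrext} and~\ref{thm:vanpsi} can be applied at $\{p\}$ to compute $m^k_S$. These compatibilities are standard in the constructible-sheaves literature, but require careful bookkeeping of shifts and transversality hypotheses; the rest is a direct comparison of two explicit Morse-module formulas.
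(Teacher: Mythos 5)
Your proof is correct and takes essentially the same approach as the paper, which simply observes that $i_!i^!\Fdot$ agrees with $\Fdot$ off $V(f)$ for the first claim and says ``compare Theorem~\ref{thm:vanrestrext} with Theorem~\ref{thm:vanpsi}'' for the second; you supply the normal-slice reduction and the stratum-by-stratum comparison $\beta_T\leq\alpha_T$, $\beta_T=0\iff\alpha_T=0$ that the paper leaves implicit. One small note: the transversality statement $(C\cdot V(\tilde\call))_\0=\operatorname{mult}_\0 C$ is not literally what \propref{prop:geng} gives, but the inequality you need, $(C\cdot V(\tilde\call))_\0\leq(C\cdot V(\tilde f))_\0$, is exactly hypothesis~(3) of \thmref{thm:vanphi}, which the paper asserts holds for generic $\tilde\call$, so the conclusion stands.
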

\begin{proof} Outside of $V(f)$, the complex $i_!i^!\Fdot$ agrees with $\Fdot$; this yields the first statement.

A comparison of \thmref{thm:vanrestrext} with \thmref{thm:vanpsi} yields the second statement.
\end{proof}

\bigskip

\begin{defn} Suppose that $E^\bullet$ is a graded, enriched cycle in $T^*\U$ given by
$E^k=\sum_{S\in\strat}E^k_S[\overline{T^*_S\U}]$, where $E^k_S$ is a finitely-generated $R$-module.

Let $(E_{\not\subseteq V(f)})^\bullet$ be the graded, enriched cycle such that $(E_{\not\subseteq V(f)})^k$ is the sum of those $E^k_S[\overline{T^*_S\U}]$ such that $S\not\subseteq V(f)$. Similarly, let $(E_{\subseteq V(f)})^\bullet$ be the graded, enriched cycle such that $(E_{\subseteq V(f)})^k$ is the sum of those $E^k_S[\overline{T^*_S\U}]$ such that $S\subseteq V(f)$.

Let $|E^k|_{\not\subseteq V(f)}:= |(E_{\not\subseteq V(f)})^\bullet|$ and $|E^k|_{\subseteq V(f)}:= |(E_{\subseteq V(f)})^\bullet|$.
\end{defn}

\bigskip

\begin{cor}\label{cor:restrextgeccset} 
$$
|\gecc^k(i_!i^!\Fdot)|= |\gecc^k(\Fdot)|_{\not\subseteq V(f)}\cup |\gecc^k(\psi_f[-1]\Fdot)|.
$$
\end{cor}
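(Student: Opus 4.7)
The plan is to read off both sides of the equation directly from \defref{def:gecc} by splitting the index set of strata. By definition, for any constructible complex $\Gdot$ on $X$,
$$
|\gecc^k(\Gdot)| \ = \ \bigcup_{\substack{S\in\strat \\ m^k_S(\Gdot)\neq 0}}\overline{T^*_S\U}.
$$
I would apply this to $\Gdot = i_!i^!\Fdot$ and partition the strata according to whether $S\subseteq V(f)$ or not, so that
$$
|\gecc^k(i_!i^!\Fdot)| \ = \ \Big(\bigcup_{\substack{S\not\subseteq V(f) \\ m^k_S(i_!i^!\Fdot)\neq 0}}\overline{T^*_S\U}\Big) \ \cup\  \Big(\bigcup_{\substack{S\subseteq V(f) \\ m^k_S(i_!i^!\Fdot)\neq 0}}\overline{T^*_S\U}\Big).
$$

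For the first piece, the first assertion of \corref{cor:morserestrext} gives $m^k_S(i_!i^!\Fdot)\cong m^k_S(\Fdot)$ for every $S\not\subseteq V(f)$, so the index set is exactly the set of strata contributing to $|\gecc^k(\Fdot)|_{\not\subseteq V(f)}$, and that union equals $|\gecc^k(\Fdot)|_{\not\subseteq V(f)}$ by the definition introduced just before the statement. For the second piece, the second assertion of \corref{cor:morserestrext} provides an ``if and only if'' between the non-vanishing of $m^k_S(i_!i^!\Fdot)$ and of $m^k_S(\psi_f[-1]\Fdot)$ whenever $S\subseteq V(f)$, so the index set matches the set of strata in $V(f)$ on which $\psi_f[-1]\Fdot$ has nonzero Morse module in degree $k$.

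To identify the second union with $|\gecc^k(\psi_f[-1]\Fdot)|$, I would use that $\psi_f[-1]\Fdot$ is supported on $V(f)$ and is constructible with respect to the sub-collection $\{S\in\strat\mid S\subseteq V(f)\}$ (this uses that $V(f)$ is a union of strata, together with the Thom $a_f$ condition implicit in our choice of $\strat$). Then \defref{def:gecc} applied to $\psi_f[-1]\Fdot$ gives precisely $|\gecc^k(\psi_f[-1]\Fdot)|=\bigcup_{S\subseteq V(f),\, m^k_S(\psi_f[-1]\Fdot)\neq 0}\overline{T^*_S\U}$, matching the second union exactly.

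There is essentially no serious obstacle here: the statement is pure bookkeeping once \corref{cor:morserestrext} is in hand. The only mild subtlety is the compatibility of stratifications, i.e.\ making certain that the same indexing set $\strat$ governs $\gecc^\bullet(\Fdot)$, $\gecc^\bullet(i_!i^!\Fdot)$, and $\gecc^\bullet(\psi_f[-1]\Fdot)$ simultaneously; this is handled by the standing assumption (imposed at the start of \secref{sec:relpolarcurve}) that $\strat$ has been refined so that $V(f)$ is a union of strata and $\Fdot$ is constructible with respect to $\strat$, which also makes $\psi_f[-1]\Fdot$ constructible with respect to the strata contained in $V(f)$.
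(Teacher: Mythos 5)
Your argument is exactly the paper's argument, written out in detail: the paper simply notes that the corollary is immediate from \corref{cor:morserestrext}, and your unpacking — partitioning strata by whether they lie in $V(f)$ and applying the two assertions of that corollary to the two pieces — is precisely what that remark elides. Nothing to add.
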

\begin{proof} This is immediate from \corref{cor:morserestrext}.
\end{proof}

\bigskip

In light of \corref{cor:restrextgeccset} and the hypotheses on $\tilde\call$ in \thmref{thm:vanrestrext}, the following proposition is of interest.

\begin{prop} Suppose that $(\0, d_\0\tilde\call)\not\in|\ms(\Fdot)|_{\not\subseteq V(f)}$. 

Then, $\dim_\0 |\big(\Gamma^1_{f, \tilde\call}(\Fdot)\big)^\bullet|\cap V(\call)\leq 0$ if and only if $\dim_\0 |\big(\Gamma^1_{f, \tilde\call}(\Fdot)\big)^\bullet|\cap V(f)\leq 0$.
\end{prop}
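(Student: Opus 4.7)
Since $|\Gamma^1_{f,\tilde\call}(\Fdot)^\bullet|$ is purely $1$-dimensional, each of the two dimension conditions is equivalent to there being no $1$-dimensional component of this set, through $\0$, contained in $V(f)$ (resp.\ $V(\call)$). The plan is to fix such a component $C$ and show that, under the given hypothesis, $C\subseteq V(f)$ if and only if $C\subseteq V(\call)$; the proposition then follows.

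Writing $C=\pi(\tilde C)$ with $\tilde C$ a $1$-dimensional component of $\overline{T^*_{f_{|_S}}\U}\cap \im d\tilde\call$ for some $\Fdot$-visible stratum $S$ with $f_{|_S}$ non-constant, and using that $\pi|_{\im d\tilde\call}$ is an isomorphism onto $\U$, one has $\tilde C=\{(p,d_p\tilde\call):p\in C\}$. Choosing $S$ minimally (so that $\tilde C$ lies in no $\overline{T^*_{f_{|_{S'}}}\U}$ for $S'$ a visible stratum in the frontier of $S$), the generic point of $\tilde C$ will lie in the open dense subset $T^*_{f_{|_S}}\U$ of its closure; hence at a generic smooth $p\in C$ I will have $p\in S$ together with a decomposition $d_p\tilde\call=\omega_p+a_p\,d_p\tilde f$, where $\omega_p\in(T^*_S\U)_p$ and $a_p\in\C$. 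Since $\omega_p$ annihilates $T_pS\supseteq T_pC$, evaluation on any $v\in T_pC$ will yield the key identity
\[
d_p\tilde\call(v) \ = \ a_p\, d_p\tilde f(v).
\]

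Both implications will then fall out of this identity. If $C\subseteq V(f)$, then $d_p\tilde f$ vanishes on $T_pC$, so the identity forces $d_p\tilde\call|_{T_pC}=0$ at generic $p$; since $\call(\0)=0$ and $\call|_C$ is then locally (hence globally, by irreducibility of $C$) constant, one obtains $C\subseteq V(\call)$. This direction does not use the hypothesis on $\ms(\Fdot)$. Conversely, if $C\subseteq V(\call)$ but $C\not\subseteq V(f)$, then $f|_C$ is non-constant (as $f(\0)=0$), so $d_p\tilde f(v)\neq 0$ for generic $p$ and $0\neq v\in T_pC$; as $d_p\tilde\call(v)=0$, the identity forces $a_p=0$, whence $d_p\tilde\call=\omega_p\in(T^*_S\U)_p$. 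Letting $p\to\0$ along $C$ will give $(\0,d_\0\tilde\call)\in\overline{T^*_S\U}\subseteq|\ms(\Fdot)|_{\not\subseteq V(f)}$, since $S$ is visible and $f_{|_S}$ non-constant forces $S\not\subseteq V(f)$, contradicting the hypothesis.

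The main obstacle will be the reduction to the ``nice'' situation $p\in S$; a priori $\tilde C$ could lie in the boundary of $T^*_{f_{|_S}}\U$ for every visible $S$ it inhabits, but a standard Whitney condition $(a)$ argument should permit replacing $S$ by a smaller visible stratum in its frontier on which the same analysis applies, so this is expected to be a routine reduction once the core identity above is in hand.
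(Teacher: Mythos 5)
Your core computation is the same device the paper uses: write $d_p\tilde\call = \omega_p + a_p\,d_p\tilde f$ with $\omega_p$ a (limit of) stratum conormal, evaluate on a tangent vector to the component $C$, and show $a_p\equiv 0$ when $C\subseteq V(\call)$ and $C\not\subseteq V(f)$, so that $(\0,d_\0\tilde\call)$ ends up in the closed conormal of a visible stratum not contained in $V(f)$, contradicting the hypothesis. You also correctly isolate the componentwise claim as the thing to prove, and your ``forward'' implication (not using the hypothesis) is precisely what the paper disposes of by citing Lemma~3.10 of \cite{enrichpolar}, so reproving it is reasonable.

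The step you flag as a ``routine reduction'' is, however, a genuine gap, and it is not where the paper appeals to Whitney~(a). To place a generic point $p$ of $C$ inside the visible stratum $S$ whose relative conormal carries $\tilde C$, your ``minimal $S$'' argument needs a frontier inclusion for \emph{relative} conormals, something like $\overline{T^*_{f_{|_S}}\U}\cap\pi^{-1}(S'')\subseteq\overline{T^*_{f_{|_{S''}}}\U}$ for frontier strata $S''$. That is a Thom $a_f$-type condition, not Whitney~(a), and a Whitney stratification does not provide it; moreover, even if it held, nothing guarantees that such an $S''$ is $\Fdot$-visible, which you need for the final contradiction.

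The paper sidesteps this by keeping two strata distinct: the visible stratum $S'$ whose relative conormal contains $\tilde C$ (your $S$), and the possibly different, possibly non-visible stratum $S$ that actually contains $\mathbf p(t)$ for $t\neq 0$. Because $C\not\subseteq V(f)$ and, near $\0$, the stratified critical locus of $f$ lies in $V(f)$, one has the fiber description $(\overline{T^*_{f_{|_{S'}}}\U})_{\mathbf x}=(\overline{T^*_{S'}\U})_{\mathbf x}+\langle d_{\mathbf x}\tilde f\rangle$ along $C-\{\0\}$; Whitney~(a), applied to \emph{ordinary} conormals, then gives $(\overline{T^*_{S'}\U})_{\mathbf x}\subseteq(T^*_S\U)_{\mathbf x}$. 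This produces your decomposition, with $\omega$ annihilating $T_{\mathbf p(t)}C$, without ever placing $\mathbf p(t)$ in $S'$. From there your computation that $a(t)\equiv 0$ and hence $(\0,d_\0\tilde\call)\in\overline{T^*_{S'}\U}\subseteq|\ms(\Fdot)|_{\not\subseteq V(f)}$ goes through as written. Replace the minimality reduction with this two-stratum argument.
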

\begin{proof} Lemma 3.10 of \cite{enrichpolar} tells us that, if $\dim_\0 |\big(\Gamma^1_{f, \tilde\call}(\Fdot)\big)^\bullet|\cap V(\call)\leq 0$, then $\dim_\0 |\big(\Gamma^1_{f, \tilde\call}(\Fdot)\big)^\bullet|\cap V(f)\leq 0$. 

Suppose then that $\dim_\0 |\big(\Gamma^1_{f, \tilde\call}(\Fdot)\big)^\bullet|\cap V(f)\leq 0$. Let $\mathbf p(t)$ be an analytic parametrization of an irreducible component $C$ of $|\big(\Gamma^1_{f, \tilde\call}(\Fdot)\big)^\bullet|$ such that $\mathbf p(0)=\0$. Suppose that $C\subseteq V(\tilde\call)$; we wish to derive a contradiction.

Let $S^\prime\in\strat$ be an $\Fdot$-visible stratum such that $C=\pi(\overline{T^*_{f_{|_{S^\prime}}}\U}\cap\im d\tilde\call)$. Let $S$ denote the stratum of $\strat$ which contains $\mathbf p(t)$ for $t\neq 0$. Note that neither $S$ nor $S^\prime$ is contained in $V(f)$, since $\dim_\0 |\big(\Gamma^1_{f, \tilde\call}(\Fdot)\big)^\bullet|\cap V(f)\leq 0$. On the other hand, in a neighborhood of the origin, the stratified critical locus of $f$ is contained in $V(f)$.

 It follows that, for all $\mathbf x\in C-\{\0\}$, the fiber $(\overline{T^*_{f_{|_{S^\prime}}}\U})_{\mathbf x}$ is equal to $(\overline{T^*_{S^\prime}\U})_{\mathbf x}+\langle d_{\mathbf x}\tilde f\rangle$. Thus, for $t\neq 0$, there exists a complex number $a(t)$ such that 
$$
d_{\mathbf p(t)}\tilde\call+a(t)d_{\mathbf p(t)}\tilde f\in (\overline{T^*_{S^\prime}\U})_{\mathbf p(t)}\subseteq (T^*_{S}\U)_{\mathbf p(t)}.\eqno{(\dagger)}
$$
By evaluating at $\mathbf p^\prime(t)$, we find that $(\call(\mathbf p(t)))^\prime+a(t)(f(\mathbf p(t))^\prime\equiv 0$. As $C\subseteq V(\tilde\call)$, we find that $a(t)(f(\mathbf p(t))^\prime\equiv 0$. As $C\not\subseteq V(f)$, we conclude that $a(t)\equiv 0$. From $(\dagger)$, it follows that $d_{\mathbf p(t)}\tilde\call\in (\overline{T^*_{S^\prime}\U})_{\mathbf p(t)}$ and, hence, that $(\0, d_\0\tilde\call)\in \overline{T^*_{S^\prime}\U}$. This contradicts the fact that $(\0, d_\0\tilde\call)\not\in|\ms(\Fdot)|_{\not\subseteq V(f)}$.
\end{proof}

\bigskip

\begin{thm}\label{thm:vanrestrpush} For all $k$, $\gecc^k(i_*i^*\Fdot)=\gecc^k(i_!i^!\Fdot)$.
\end{thm}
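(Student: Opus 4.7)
The plan is to compare Morse modules stratum by stratum. For every $S\in\strat$ and every $k$, I will establish $m^k_S(i_!i^!\Fdot)\cong m^k_S(i_*i^*\Fdot)$, from which the equality of the $\gecc^k$'s follows by the stratified definition.

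\textbf{Strata outside $V(f)$.} If $S\not\subseteq V(f)$, a generic point $p\in S$ lies in $X-V(f)$, where both $i_!i^!\Fdot$ and $i_*i^*\Fdot$ agree with $\Fdot$ on a neighborhood of $p$. Since $m^k_S$ depends only on the complex near a generic point of $S$, this gives $m^k_S(i_!i^!\Fdot)\cong m^k_S(\Fdot)\cong m^k_S(i_*i^*\Fdot)$ immediately.

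\textbf{Strata in $V(f)$.} For $S\subseteq V(f)$, take a normal slice at a point $p\in S$; this reduces the computation to the point-stratum case $S=\{\0\}$ in the ambient slice, with $f$ replaced by its restriction. My plan in this reduced situation is to establish an analog of \thmref{thm:vanrestrext} for $i_*i^*\Fdot$, namely
$$m^k_\0(i_*i^*\Fdot)\;\cong\;\bigoplus_{\substack{T\in\strat(\Fdot)\\ T\not\subseteq V(f)}}m^k_T(\Fdot)\otimes R^{\beta_T},$$
matching the formula \thmref{thm:vanrestrext} gives for $m^k_\0(i_!i^!\Fdot)$. To derive this, I would apply the cohomological Morse functor $m^\bullet_\0$ to the distinguished triangle $j_!j^!\Fdot\to\Fdot\to i_*i^*\Fdot\overset{[1]}{\to}$, obtaining a long exact sequence, and compute $m^k_\0(j_!j^!\Fdot)$ via \thmref{thm:vanpsi} combined with the hypersurface triangle $\phi_f[-1]\Fdot\to\psi_f[-1]\Fdot\to j^!\Fdot\overset{[1]}{\to}$ (the ``variation'' triangle, dual to the ``canonical'' triangle used in Example~\ref{exm:curvegecc}).

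The main obstacle will be the bookkeeping: the polar-curve contributions from $\psi_f[-1]\Fdot$, $\phi_f[-1]\Fdot$, and $\Fdot$ in the resulting long exact sequence must cancel so that precisely the $\beta_T$-terms indexed by the strata $T\not\subseteq V(f)$ remain, with the same intersection multiplicities $\beta_T=(\Gamma^1_{f,\tilde\call}(T)\cdot V(\tilde\call))_\0$. A conceptually cleaner alternative is available when $R$ is a Dedekind domain or a field: the standard identity $\vdual(i_*i^*\Fdot)\cong i_!i^!\vdual\Fdot$, combined with \thmref{thm:vanrestrext} applied to $\vdual\Fdot$ and the duality formula of \propref{prop:basicgecc}(3), directly identifies $m^k_\0(i_*i^*\Fdot)$ with $m^k_\0(i_!i^!\Fdot)$ up to $\Ext$ corrections that either vanish (over a field) or get absorbed into the dualized polar coefficients (over a Dedekind domain).
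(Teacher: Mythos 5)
Your route (b) is essentially the paper's proof: the published argument cites exactly $i_*i^*\Fdot\cong\vdual i_!i^!\vdual\Fdot$, \thmref{thm:vanrestrext}, \corref{cor:restrextgeccset}, and the commutation $\vdual\psi_f[-1]\cong\psi_f[-1]\vdual$, so you should promote (b) from side remark to primary plan. Your stratum-by-stratum reduction and the handling of strata $S\not\subseteq V(f)$ are exactly right, and the normal-slice reduction to the point-stratum case is the standard (and intended) step.

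Route (a), however, is unlikely to close without substantially more work than ``bookkeeping.'' The triangle $j_!j^!\Fdot\to\Fdot\to i_*i^*\Fdot\arrow{[1]}$ only gives a long exact sequence in Morse modules, not a direct sum decomposition: you would need to prove the connecting maps vanish, and the sequence entangles $m^k_\0(\Fdot)$, which does not appear in the formula of \thmref{thm:vanrestrext} at all. Feeding in the variation triangle $\phi_f[-1]\Fdot\to\psi_f[-1]\Fdot\to j_!j^![1]\Fdot\arrow{[1]}$ produces a second long exact sequence with the same problem. The duality argument exists precisely to avoid this.

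Two things to tighten in (b). First, you do not address the genericity of $\tilde\call$ when applying \thmref{thm:vanrestrext} to $\vdual\Fdot$: one must check that a single $\tilde\call$ works simultaneously for $\Fdot$ and $\vdual\Fdot$. This is exactly where $\vdual\psi_f[-1]\cong\psi_f[-1]\vdual$ and \corref{cor:restrextgeccset} enter; they give $\ms(i_!i^!\vdual\Fdot)=|\ms(\Fdot)|_{\not\subseteq V(f)}\cup\ms(\psi_f[-1]\Fdot)=\ms(i_!i^!\Fdot)$, so the same covector is nondegenerate for both. Second, the phrase ``get absorbed into the dualized polar coefficients'' should be made precise: over a PID one has $Hom(Hom(M,R),R)\oplus Ext(Ext(M,R),R)\cong M$ while $Hom(Ext(M,R),R)=0$ and $Ext(Hom(M,R),R)=0$; since the $\beta_S$ are intersection numbers depending only on $f$, $\tilde\call$, $S$ and not on the complex, dualizing the formula of \thmref{thm:vanrestrext} applied to $\vdual\Fdot$ collapses term by term back to $\bigoplus_{S\not\subseteq V(f)} m^k_S(\Fdot)\otimes R^{\beta_S}=m^k_\0(i_!i^!\Fdot)$. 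Your flag that this uses the Dedekind hypothesis of \propref{prop:basicgecc}(3) is correct and worth stating, since the theorem as printed carries only the standing assumption on $R$.
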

\begin{proof} One uses \thmref{thm:vanrestrext} and \corref{cor:restrextgeccset}, together with the isomorphisms $i_*i^*\Fdot\cong\vdual i_!i^!\vdual\Fdot$ and $\vdual\psi_f[-1]\cong\psi_f[-1]\vdual$. We leave the proof as an exercise.
\end{proof}

\medskip

\begin{rem} Note that $\gecc^k(i_*i^*\Fdot)$ and $\gecc^k(i_!i^!\Fdot)$ do not depend on any degree of $\gecc^\bullet(\Fdot)$, other than the degree $k$ portion. In particular, if $\gecc^\bullet(\Fdot)$ is concentrated in degree $0$, then so are $\gecc^\bullet(i_*i^*\Fdot)$ and $\gecc^\bullet(i_!i^!\Fdot)$. Thus, we recover the well-known fact that, if $\Fdot$ is a perverse sheaf and $i$ is the inclusion of the complement of the zero locus of a single function, then $i_*i^*\Fdot$ and $i_!i^!\Fdot$ are also perverse.
\end{rem}

\medskip

The following corollary is immediate from \thmref{thm:vanrestrext} and \thmref{thm:vanrestrpush}.
\begin{cor}\label{cor:numcompnum} If $\Pdot$ is numerical, then so are $i_*i^*\Pdot$ and $i_!i^!\Pdot$.
\end{cor}

\smallskip

\noindent\rule{1in}{1pt}

\bigskip

Recall that we have the closed inclusion $j:V(f)\hookrightarrow X$. Unlike the functors $i_*i^*$ and $i_!i^!$ discussed above, the functors $j_*j^*$ and $j_!j^!$ (with or without shifts) do not take perverse sheaves to perverse sheaves. In other words, $\gecc^k(j_*j^*\Fdot)$ and $\gecc^k(j_!j^!\Fdot)$ are not determined by a single degree of $\gecc^\bullet(\Fdot)$. Of course, given the canonical distinguished triangles relating $j_*j^*$ and  $i_!i^!$, and $j_!j^!$ and $i_*i^*$, we immediately obtain:

\begin{prop}\label{prop:easytri} 

\ 

$$\cc(j_*j^*\Fdot) = \cc(\Fdot)-\cc(i_!i^!\Fdot) =  \cc(\Fdot)-\cc(i_*i^*\Fdot) = \cc(j_!j^!\Fdot).$$
\end{prop}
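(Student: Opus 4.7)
The plan is to exploit the two canonical attaching triangles associated with the open/closed pair $(X - V(f), V(f))$, together with the additivity of $\cc$ on distinguished triangles from \propref{prop:basicccprops}(3), and then reconcile the two resulting expressions for $\cc(\Fdot)$ using \thmref{thm:vanrestrpush}.

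More precisely, I would first write down the two canonical distinguished triangles in $D^b_c(X)$:
\begin{equation*}
i_!i^!\Fdot \longrightarrow \Fdot \longrightarrow j_*j^*\Fdot \arrow{[1]} i_!i^!\Fdot,
\end{equation*}
\begin{equation*}
j_!j^!\Fdot \longrightarrow \Fdot \longrightarrow i_*i^*\Fdot \arrow{[1]} j_!j^!\Fdot.
\end{equation*}
Applying \propref{prop:basicccprops}(3) to each triangle immediately gives the two identities
\begin{equation*}
\cc(\Fdot) \;=\; \cc(i_!i^!\Fdot) + \cc(j_*j^*\Fdot) \qquad\text{and}\qquad \cc(\Fdot) \;=\; \cc(i_*i^*\Fdot) + \cc(j_!j^!\Fdot),
\end{equation*}
which rearrange to $\cc(j_*j^*\Fdot) = \cc(\Fdot) - \cc(i_!i^!\Fdot)$ and $\cc(j_!j^!\Fdot) = \cc(\Fdot) - \cc(i_*i^*\Fdot)$.

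To close the chain of equalities, I would then invoke \thmref{thm:vanrestrpush}, which asserts $\gecc^k(i_*i^*\Fdot) = \gecc^k(i_!i^!\Fdot)$ for every $k$. Since $\cc$ is obtained from $\gecc^\bullet$ by taking the alternating sum of the ranks of the coefficient modules (recall the base ring is assumed to be an integral domain whenever $\cc$ appears), this equality of graded, enriched characteristic cycles in each degree forces $\cc(i_!i^!\Fdot) = \cc(i_*i^*\Fdot)$. Substituting into the two rearranged identities yields the full chain
\begin{equation*}
\cc(j_*j^*\Fdot) \;=\; \cc(\Fdot) - \cc(i_!i^!\Fdot) \;=\; \cc(\Fdot) - \cc(i_*i^*\Fdot) \;=\; \cc(j_!j^!\Fdot).
\end{equation*}

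There is really no obstacle here: the only substantive input is \thmref{thm:vanrestrpush}, and everything else is an immediate consequence of the additivity of the characteristic cycle on distinguished triangles. The mild point worth flagging in the write-up is the standing convention from \secref{sec:gecc} that $\cc$ is defined only when $R$ is an integral domain, so that passing from the equality of gecc's to the equality of ordinary characteristic cycles via ranks is legitimate.
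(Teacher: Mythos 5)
Your argument is correct and coincides with the approach indicated in the paper: the two outer equalities come from applying \propref{prop:basicccprops}(3) to the canonical attaching triangles for the open/closed pair, and the middle equality follows immediately from \thmref{thm:vanrestrpush} (which itself rests on Verdier duality), exactly as the paper suggests in the sentence preceding the proposition.
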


\bigskip

Perhaps more interesting is the easy corollary to \thmref{thm:vanrestrext}:

\medskip

\begin{cor}\label{cor:hyplink} Suppose that, for all $k$, $m^k_\0(\Fdot)=0$. Then, 
$$
m^k_\0(j_*j^*[-1]\Fdot)\cong m^k_\0(j_!j^![1]) \cong \left(\big(\Gamma^1_{f, \tilde\call}(\Fdot)\big)^k\odot V(\tilde\call)\right)_\0=\bigoplus_{\substack{S\in\strat(\Fdot)\\ S\not\subseteq V(f)}}m^k_S(\Fdot)\otimes R^{\beta_S},
$$
where $\beta_S:= \left(\Gamma^1_{f,\tilde\call}(S)\cdot V(\tilde\call)\right)_\0$
and $\tilde\call$ is a generic linear form.

\end{cor}
\begin{proof} Let $\call =\tilde\call_{|_{X}}$. The corollary follows immediately from \thmref{thm:vanrestrext} and \thmref{thm:vanrestrpush}, together with applying the functor $\phi_\call[-1]$ to the two canonical distinguished triangles
$$
\cdots\rightarrow j_*j^*[-1]\Fdot\rightarrow i_!i^!\Fdot\rightarrow \Fdot\arrow{[1]}\cdots
$$
and
$$
\cdots\rightarrow\Fdot\rightarrow i_*i^*\Fdot\rightarrow  j_!j^![1]\Fdot\arrow{[1]}\cdots.
$$
\end{proof}

\bigskip

We wrote that the above corollary is ``more interesting'' because it lets us conclude the classical result below.

\begin{exm}\label{exm:genlink} Suppose that we have $f:(\U,\0)\rightarrow(\C,0)$, where $\U$ is connected and $f$ is not identically $0$. Consider the constant sheaf $\Pdot:=\Z^\bullet_{{}_\U}[n+1]$. 

Then, for all $k$, $m^k_\0(\Pdot)=0$ and so, by \corref{cor:hyplink},
$$m^k_\0(j_*j^*[-1]\Pdot)\cong\bigoplus_{\substack{S\in\strat(\Pdot)\\ S\not\subseteq V(f)}}m^k_S(\Pdot)\otimes R^{\beta_S}$$
where $\beta_S:= \left(\Gamma^1_{f,\tilde\call}(S)\cdot V(\tilde\call)\right)_\0$
and $\tilde\call$ is a generic linear form.

\smallskip

The only $\Pdot$-visible stratum is $\U$. As $\Pdot$ is perverse, the only possibly non-zero $m^k_\U(\Pdot)$ occurs when $k=0$, and $m^0_\U(\Pdot)\cong \Z$.

\smallskip

Therefore, the only possibly non-zero $m^k_\0(j_*j^*[-1]\Pdot)$ occurs when $k=0$ and
$$
m^0_\0(j_*j^*[-1]\Pdot)\cong \Z^\beta,
$$
where $\beta = \left(\Gamma^1_{f,\tilde\call}(\U)\cdot V(\tilde\call)\right)_\0$.

\smallskip

This says that
$$
m^0_\0(j_*j^*[-1]\Pdot)=m^0_\0(\Z^\bullet_{{}_{V(f)}}[n]) = \left(\Gamma^1_{f,L}\cdot V(L)\right)_\0
$$
for generic linear $L$, where, since we are in affine space, we have written the more usual $L$ in place of $\tilde\call$ and have written $\Gamma^1_{f,L}$ in place of $\Gamma^1_{f,\tilde\call}(\U)$. Note that $m^0_\0(\Z^\bullet_{{}_{V(f)}}[n])$ is precisely the reduced integral cohomology in degree $n-1$ of the complex link of $V(f)$ at $\0$. 

\smallskip

This is the cohomological version of the well-known result that the complex link of an $n$-dimensional hypersurface has the homotopy-type of a bouquet of $(n-1)$-spheres, where the number of spheres is given by $\left(\Gamma^1_{f,L}\cdot V(L)\right)_\0$.
\end{exm}

\section{The Vanishing Cycles}\label{sec:vanishing}

Before we can give a formula for $\gecc^\bullet(\phi_f[-1]\Fdot)$, we must first discuss the graded, enriched exceptional divisor in the blow-up of a graded, enriched cycle along an ideal.

Let us recall the notation established thus far. $\U$ is an open neighborhood of the origin of $\C^{n+1}$,  $X$ is a closed, analytic subset of $\U$, $\mathbf z:=(z_0, \dots, z_n)$ are coordinates on $\U$, we identify the cotangent space $T^*\U$ with $\U\times\C^{n+1}$ by mapping $(\p, w_0d_{\p}z_0+\dots+w_nd_{\p}z_n)$ to $(\p, (w_0,\dots, w_n))$., and we let $\pi:T^*\U\rightarrow\U$ denote the projection.  

Consider a graded, enriched cycle $D^\bullet$ in $T^*\U$ given by $D^k:=\sum D^k_V[V]$. Let $h_0, \dots, h_m$ be analytic functions on $T^*\U$, and let $I$ be the ideal $\langle h_0, \dots h_m\rangle$. Then, for each $V$, the blow-up ${\operatorname{Bl}}_I V$ of $V$ along $I$ is naturally a subspace of $T^*\U\times\Proj^m\cong \U\times \C^{n+1}\times\Proj^m$. Let ${\operatorname{Ex}}_I(V)$ denote the exceptional divisor as a cycle. 

\begin{defn} The {\bf graded, enriched blow-up ${\operatorname{Bl}}^\bullet_I(D^\bullet)$ of $D^\bullet$ along $I$ in $T^*\U\times\Proj^m$} is given by ${\operatorname{Bl}}^k_I(D^\bullet):=\sum_V D^k_V[{\operatorname{Bl}}_I V]^{\operatorname{enr}}$.

The {\bf graded, enriched exceptional divisor ${\operatorname{Ex}}^\bullet_I(D^\bullet)$ of $D^\bullet$ along $I$ in $T^*\U\times\Proj^m$} is given by ${\operatorname{Ex}}^k_I(D^\bullet):=\sum_V D^k_V[{\operatorname{Ex}}_I V]^{\operatorname{enr}}$.
\end{defn}

\smallskip

Instead of subscripting the blow-up and exceptional divisor by the ideal $I$, it is common to subscript by the analytic scheme $V(I)$. In particular, below, we shall blow-up along $\operatorname{im}d\tilde f\subseteq \U\times\C^{n+1}$; we remind the reader that this is defined by the ideal
$$\left\langle w_0- \frac{\partial \tilde f}{\partial z_0}, \dots, w_n- \frac{\partial \tilde f}{\partial z_n}\right\rangle.$$

\bigskip

Let $\tau:\U\times\C^{n+1}\times\Proj^n\rightarrow \U\times\Proj^n$ denote the projection, and recall that $\tau_*$ denotes the proper push-forward. The following is Theorem 3.5 of \cite{singenrich}.

\vskip .2in

\begin{thm}\label{thm:vangecc}
There is an equality of closed subsets of $X$ given by
$$\bigcup_{v\in\C}\supp\phi_{f-v}[-1]\Fdot\ =\ \pi\big(\ms(\Fdot)\cap\operatorname{im}d\tilde f\big),
$$ 
and, for all $k$, an equality of graded, enriched cycles given by
$$
\sum_{v\in\C}\Proj\big(\gecc^k(\phi_{f-v}[-1]\Fdot)\big)\ =\ \tau_*\big({\operatorname{Ex}}_{\operatorname{im}d\tilde f}(\gecc^k(\Fdot))\big).
$$ 
In particular, for all $k$, there is an equality of sets
$$\bigcup_{v\in\C}\pi\big(|\gecc^k(\phi_{f-v}[-1]\Fdot)|\big)\ =\ \pi\big(|\gecc^k(\Fdot)|\cap\operatorname{im}d\tilde f\big).
$$
\end{thm}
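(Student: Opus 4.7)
The plan is to analyze the theorem stratum by stratum, exploiting the decomposition $\gecc^k(\Fdot) = \sum_{S\in\strat} m_S^k(\Fdot)\bigl[\overline{T^*_S\U}\bigr]$ and the fact that both sides of each equation are additive in $S$. Thus it suffices to match, for each stratum $S$, the contribution of $[\overline{T^*_S\U}]$ on the blow-up side with the $S$-contribution to $\gecc^k(\phi_{f-v}[-1]\Fdot)$. The Morse module $m_S^k(\Fdot)$ can then be tensored onto both sides at the end, since normal Morse data are unchanged by the slicing constructions used below.

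For the support statement, I would invoke the standard microlocal criterion for vanishing cycles (cf. \remref{rem:morsemod}, which already expresses Morse modules as stalks of $\phi_g[-1]\Fdot$ for generic $g$): at a point $\p$ with $f(\p)=v$, the stalk of $\phi_{f-v}[-1]\Fdot$ is non-zero iff $(\p, d_\p\tilde f)\in\ms(\Fdot)$. Since $(\p, d_\p\tilde f)\in\imdf$ tautologically, ranging over all $v\in\C$ gives $\bigcup_v\supp\phi_{f-v}[-1]\Fdot = \pi(\ms(\Fdot)\cap\imdf)$. The third assertion, the set equality for each $|\gecc^k(\phi_{f-v}[-1]\Fdot)|$, follows from the same criterion once one knows which Morse modules are non-zero, which in turn follows from the cycle equality.

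The main content is the cycle equation. Fixing a stratum $S$, one localizes at a point $\p\in\pi(\overline{T^*_S\U}\cap\imdf)$, where $\tilde f$ has a stratified critical point on $\overline S$. Taking a normal slice $N$ to $S$ through $\p$ reduces the question to an isolated critical point of $\tilde f|_N$, and the exceptional divisor of $\overline{T^*_S\U}$ along $\imdf$ transforms compatibly. For a single isolated critical point on a smooth stratum, the multiplicity of ${\operatorname{Ex}}_{\imdf}\overline{T^*_S\U}$ at a point of $\U\times\Proj^n$ equals the intersection number of the relative polar curve with $V(\tilde f)$, which by the classical L\^e--Teissier identification equals the Milnor number of $\tilde f|_N$, and this is precisely the rank of the vanishing cycle at $\p$ for the constant-sheaf summand of $S$. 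Combining with \thmref{thm:vanpsi} and the distinguished triangle comparing $\psi_f[-1]$ and $\phi_f[-1]$, one recovers the enriched multiplicity statement.

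The main obstacle will be the multiplicity bookkeeping: components of ${\operatorname{Ex}}_{\imdf}\overline{T^*_S\U}$ may collapse under $\tau$, and one must verify that their proper pushforward weights assemble into the Milnor-number contributions arising from all strata whose closures pass through $\p$. To control this, I would choose a non-degenerate linear form $\tilde\call$ transverse to all visible strata (as supplied by \propref{prop:geng}), slice by $V(\tilde\call)$ to pass between generic and special points of each component of $\pi(\overline{T^*_S\U}\cap\imdf)$, and use the compatibility of the enriched push-forward with hypersurface intersection (\propref{prop:properpush}) to track the Morse-module coefficients through the normal-cone construction.
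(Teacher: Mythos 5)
The paper does not prove this theorem: it is quoted as Theorem~3.5 of \cite{singenrich}, with the non-enriched version attributed to \cite{critpts} and \cite{paruprag}. So there is no internal proof to compare your sketch against; I will evaluate it on its own terms. The overall strategy — decompose $\gecc^k(\Fdot)$ over strata, normal-slice to reduce to isolated stratified critical points, identify exceptional-divisor multiplicities with Milnor-type invariants, and feed through \thmref{thm:vanpsi} and \thmref{thm:vanphi} — is the natural one. But there is a concrete error in the central identification and a structural gap in the reduction.

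The error: you claim the pushforward multiplicity of ${\operatorname{Ex}}_{\imdf}\overline{T^*_S\U}$ over a point equals $\bigl(\Gamma^1_{f,\tilde\call}(S)\cdot V(\tilde f)\bigr)_\0$ and that L\^e--Teissier identifies this with the Milnor number of the slice. Neither half is correct. The intersection with $V(\tilde f)$ alone is the number $\alpha_S$ of \thmref{thm:vanpsi}, which controls the \emph{nearby} cycles. What governs the exceptional divisor (and matches $m^k_\0(\phi_f[-1]\Fdot)$) is the difference $\delta_S=\bigl(\Gamma^1_{f,\tilde\call}(S)\cdot V(\tilde f)\bigr)_\0-\bigl(\Gamma^1_{f,\tilde\call}(S)\cdot V(\tilde\call)\bigr)_\0$ appearing in \thmref{thm:vanphi}; it is the \emph{difference} that L\^e--Teissier equates to the Milnor number, and it is this difference that equals the Hilbert--Samuel multiplicity of the ideal of $\imdf$ restricted to $\overline{T^*_S\U}$. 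Using $\alpha_S$ instead overcounts by the polar intersection with a generic hyperplane and would produce $\gecc^\bullet(\psi_{f-v}[-1]\Fdot)$ in place of $\gecc^\bullet(\phi_{f-v}[-1]\Fdot)$.

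The structural gap: your opening sentence asserts it ``suffices to match, for each stratum $S$, the contribution of $[\overline{T^*_S\U}]$ on the blow-up side with the $S$-contribution to $\gecc^k(\phi_{f-v}[-1]\Fdot)$,'' but there is no such stratum-by-stratum matching. As you yourself observe two paragraphs later, $\tau_*\bigl({\operatorname{Ex}}_{\imdf}\overline{T^*_S\U}\bigr)$ has components sitting over projectivized conormals of many strata $S'\subseteq V(f-v)$ with $S'\subseteq\overline S$; dually, the coefficient of $\Proj\bigl(\overline{T^*_{S'}\U}\bigr)$ on the left side collects contributions from every $\Fdot$-visible $S$ with $S'\subseteq\overline S$ — this is precisely what \thmref{thm:vanphi} records. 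The ``multiplicity bookkeeping'' you defer is therefore not a technical afterthought but the actual content of the proof, and the sketch does not yet carry it out. Finally, for the support equality you invoke a stalkwise iff; the inclusion $\supp\phi_{f-v}[-1]\Fdot\subseteq\pi\bigl(\ms(\Fdot)\cap\imdf\bigr)$ is the easy microsupport estimate, but the reverse inclusion requires an argument (it can be extracted from the non-vanishing of the Morse module given by \thmref{thm:vanphi} at a generic point of each component) and should not be asserted as a one-line criterion.
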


\bigskip

\begin{rem}
We should remark that, in the above unions and sum over $v\in\C$, the unions and sum are not merely locally finite, but, in fact, locally over open neighborhoods of points in $X$, there is only one non-zero (or non-empty) summand (respectively, indexed subset in the union).
\end{rem}

\medskip

The following corollary is immediate from \thmref{thm:vangecc}.
\begin{cor}\label{cor:vannum} If $\Pdot$ is numerical, then so is $\phi_f[-1]\Pdot$.
\end{cor}

\bigskip

The following result follows at once from Theorem 4.2 of \cite{hypercohom}, and is used in the proof of \thmref{thm:vangecc}.

\begin{thm}\label{thm:vanphi}
$$
m^k_\0(\phi_f[-1]\Fdot)\cong 
m^k_\0(\Fdot)\oplus\bigoplus_{\substack{S\in\strat(\Fdot)\\ S\not\subseteq V(f)}}m^k_S(\Fdot)\otimes R^{\delta_S},
$$
where $\delta_S:= \left(\Gamma^1_{f,\tilde\call}(S)\cdot V(\tilde f)\right)_\0-\left(\Gamma^1_{f,\tilde\call}(S)\cdot V(\tilde\call)\right)_\0$, where $\tilde\call$ is a generic linear form; specifically, we need for the following three conditions to hold:
\begin{enumerate}
\item $(\0, d_\0\tilde \call)\not\in \overline{\ms(\phi_f[-1]\Fdot)-T^*_\0\U}$;

\item $\dim_\0 |\big(\Gamma^1_{f, \tilde\call}(\Fdot)\big)^\bullet|\cap V(\call)\leq 0$, and 

\item $\left(\Gamma^1_{f,\tilde\call}(S)\cdot V(\tilde f)\right)_\0\geq\left(\Gamma^1_{f,\tilde\call}(S)\cdot V(\tilde\call)\right)_\0$.
\end{enumerate}
\end{thm}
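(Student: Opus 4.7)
The plan is to derive this formula by combining the canonical distinguished triangle
$$
j_*j^*\Fdot[-1]\longrightarrow\psi_f[-1]\Fdot\longrightarrow\phi_f[-1]\Fdot\arrow{[1]}
$$
on $X$, where $j:V(f)\hookrightarrow X$ is the closed inclusion, with the formula for the nearby-cycle Morse module in \thmref{thm:vanpsi}. This is essentially the argument of Theorem 4.2 of \cite{hypercohom}, recast in the graded, enriched framework developed here.

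First, I would apply the cohomological functor $m^k_\0(\,\cdot\,)=H^k(\phi_{\tilde\call}[-1](\cdot))_\0$, for a linear form $\tilde\call$ satisfying conditions 1--3, to the triangle, yielding a long exact sequence relating the Morse modules at $\0$ of $j_*j^*\Fdot$, $\psi_f[-1]\Fdot$, and $\phi_f[-1]\Fdot$. Condition 1 is exactly what ensures that $\tilde\call$ is generic relative to $\phi_f[-1]\Fdot$, so that $\phi_{\tilde\call}[-1]$ correctly computes $m^k_\0(\phi_f[-1]\Fdot)$; and condition 2, together with the proposition preceding \thmref{thm:vanrestrpush}, supplies the genericity $\dim_\0|\Gamma^1_{f,\tilde\call}(\Fdot)|\cap V(f)\leq 0$ needed to invoke \thmref{thm:vanpsi} and obtain
$$m^k_\0(\psi_f[-1]\Fdot)\cong\bigoplus_{\substack{S\in\strat(\Fdot)\\ S\not\subseteq V(f)}}m^k_S(\Fdot)\otimes R^{\alpha_S}.$$

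Second, I would analyze the restriction term. The module $m^k_\0(j_*j^*\Fdot)$ is the relative hypercohomology of the slice pair $(\N_\0\cap V(f),\cL_\0\cap V(f))$ with coefficients in $\Fdot$; viewing this as the Milnor fiber of $\tilde\call$ built inside $V(f)$, a L\^e-type attaching argument identifies the contribution of each positive-dimensional $\Fdot$-visible stratum $S\not\subseteq V(f)$ as $m^k_S(\Fdot)\otimes R^{\beta_S}$ thimbles attached at the $\beta_S=(\Gamma^1_{f,\tilde\call}(S)\cdot V(\tilde\call))_\0$ intersection points along the polar curve, together with an ambient background contribution equal to $m^k_\0(\Fdot)$ from the point-stratum $\{\0\}$ itself.

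The main obstacle is the final step: showing that the long exact sequence decomposes, stratum-by-stratum, into split short exact sequences producing the direct sum $m^k_\0(\Fdot)\oplus\bigoplus_{S\not\subseteq V(f)}m^k_S(\Fdot)\otimes R^{\delta_S}$. Condition 3, $(C\cdot V(\tilde f))_\0\geq(C\cdot V(\tilde\call))_\0$ along every component $C$ of $\Gamma^1_{f,\tilde\call}(S)$, is precisely the technical hypothesis that achieves this: geometrically it says that along each polar branch the order of vanishing of $\tilde f$ dominates that of $\tilde\call$, which simultaneously guarantees $\delta_S=\alpha_S-\beta_S\geq 0$ (so that the enriched multiplicity $R^{\delta_S}$ is well-defined) and forces the connecting homomorphisms in the long exact sequence to embed each $\beta_S$-piece of $m^\ast_\0(j_*j^*\Fdot)$ as a free direct summand of the corresponding $\alpha_S$-piece of $m^\ast_\0(\psi_f[-1]\Fdot)$ while annihilating the $m^k_\0(\Fdot)$-summand. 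Assembling the resulting split pieces over all strata yields the claimed formula.
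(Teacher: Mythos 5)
The paper itself gives no independent proof of this theorem; it states that the result ``follows at once from Theorem 4.2 of \cite{hypercohom},'' the same external result cited for \thmref{thm:vanpsi} and \thmref{thm:vanrestrext}. So your long-exact-sequence route is, within this paper, a genuinely different approach. Unfortunately it has two serious gaps.

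First, your description of $m^k_\0(j_*j^*\Fdot[-1])$ is asserted rather than proved, and it is not even phrased in the right terms. Since $j_*j^*\Fdot$ is supported on $V(f)$, its Morse module at $\0$ is $\hyp^{\,*}\big(B_\epsilon\cap V(f),\,B_\epsilon\cap V(f)\cap\tilde\call^{-1}(a);\,\Fdot\big)$; the stratified Morse contributions to this come from the strata of $V(f)$ and the polar variety of $\tilde\call$ on $V(f)$, not directly from the strata $S\not\subseteq V(f)$ via $\Gamma^1_{f,\tilde\call}(S)$. Converting those $V(f)$-internal contributions into a sum indexed by the $\Fdot$-visible strata not contained in $V(f)$, weighted by the relative polar curves $\Gamma^1_{f,\tilde\call}(S)$, is exactly the nontrivial content of results of the \thmref{thm:vanrestrext} type (Theorem 4.2.B of \cite{hypercohom}); you cannot simply invoke ``a L\^e-type attaching argument'' without doing that translation. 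Moreover, your proposed decomposition does not even survive an Euler-characteristic sanity check unless the strata summands carry a degree shift relative to the $m^\bullet_\0(\Fdot)$ summand: from $\cc(j_*j^*\Fdot)=\cc(\Fdot)-\cc(i_!i^!\Fdot)$ at the origin, the stratum terms must enter with a sign opposite to $m^\bullet_\0(\Fdot)$, so any direct-sum formula must shift the $S$-indexed pieces by one degree. You write the decomposition with no shift, which would get the target $m^k_\0(\Fdot)$ in the wrong degree in the final answer once the connecting maps in the long exact sequence are traced through.

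Second, even granting the decomposition of $m^\bullet_\0(j_*j^*\Fdot[-1])$, your treatment of the long exact sequence is not a proof. You claim that condition 3, namely $\left(C\cdot V(\tilde f)\right)_\0\geq\left(C\cdot V(\tilde\call)\right)_\0$ along each component $C$, ``forces'' the boundary maps to embed each $R^{\beta_S}$-piece as a free direct summand of the $R^{\alpha_S}$-piece while annihilating the $m^\bullet_\0(\Fdot)$-summand, and that the remaining extensions split. Condition 3 is a numerical inequality on intersection multiplicities; passing from it to a structural statement about specific morphisms between Morse modules over an arbitrary regular Noetherian base ring $R$ is exactly the point that requires a real argument — a Morse-theoretic identification of the attaching maps, or a degeneration/Thom-type argument — and none is given. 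Over a field the splittings are automatic, but $R$ is not assumed to be a field, and the direct-sum conclusion of the theorem is precisely what is at stake. As written, the last paragraph of your proposal is a list of the things that would make the long exact sequence compute the answer, not a demonstration that they hold.
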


\medskip

Before we can state and prove our next result, we need a definition.

\begin{defn} The {\bf algebraic critical locus of $f$}, $\Sigma_{\operatorname{alg}}f$, is the set of those $x\in X$ such that $f\in \mathfrak m_{{}_{X, x}}^2$, where $\mathfrak m_{{}_{X, x}}$ is the maximal ideal of $X$ at $x$. \end{defn}

\medskip

Now we have:

\begin{thm}\label{thm:sskphi} For all $k$, 
$$\big|\gecc^k(\Fdot)\big|_{\subseteq V(f)}\ \subseteq\ \big|\gecc^k(\phi_f[-1]\Fdot)\big|\ \subseteq\ \big|\gecc^k(\Fdot)\big|_{\subseteq V(f)}\cup \big|\gecc^k(\psi_f[-1]\Fdot)\big|.
$$

Moreover, over $\Sigma_{\operatorname{alg}}f$, this second containment is an equality, i.e.,

\smallskip

\noindent$\big|\gecc^k(\phi_f[-1]\Fdot)\big|\cap\pi^{-1}\left(\Sigma_{\operatorname{alg}}f\right)\ =\ \hfill$

\smallskip

\noindent$\hfill\left(\big|\gecc^k(\Fdot)\big|_{\subseteq V(f)}\cup \big|\gecc^k(\psi_f[-1]\Fdot)\big|\right)\cap\pi^{-1}\left(\Sigma_{\operatorname{alg}}f\right).$
\end{thm}
\begin{proof} The first line of containments follows immediately from \thmref{thm:vanphi} and \thmref{thm:vanpsi}.

The equality over $\Sigma_{\operatorname{alg}}f$ follows at once from \thmref{thm:psigecc}, \thmref{thm:vangecc}, and Theorem 4.2 of \cite{critpts}.
\end{proof}

\vskip .3in

\begin{rem}\label{rem:lecyclerem} In this long remark, we wish to address how effectively one can calculate $\gecc^\bullet(\phi_f[-1]\Fdot)$, given $f$ and $\gecc^\bullet(\Fdot)$. This will require us to discuss much of our work in \cite{numinvar}, \cite{numcontrol}, and \cite{singenrich}. As usual, we will identify $T^*\U$ with $\U\times\C^{n+1}$, and use four different projections: $\tau:\U\times\C^{n+1}\times\Proj^n\rightarrow \U\times\Proj^n$, $\eta:\U\times\Proj^n\rightarrow\U$, $\nu:\U\times\C^{n+1}\times\Proj^n\rightarrow \U\times\C^{n+1}$, and $\pi:\U\times\C^{n+1}\rightarrow\U$.

\smallskip

We shall assume that our base ring $R$ is a PID, and that $X$ has codimension at least $1$ in $\U$ (so that our projectivizations below do not totally discard components).

\smallskip

Assume that we have re-chosen $\U$ small enough so that $\Proj\big(\gecc^k(\phi_f[-1]\Fdot)\big)$ is the only non-zero summand in \thmref{thm:vangecc}. Then, \thmref{thm:vangecc} gives a nice, elegant algebraic characterization of the projectivized $\gecc$ of $\phi_f[-1]\Fdot$, in terms of blow-ups and exceptional divisors. The problem is that blow-ups and exceptional divisors are not so easy to calculate.

Suppose that $\Adot$ is a bounded complex of sheaves of modules over $R$, which is constructible with respect to $\strat$. We shall first describe a general method for ``calculating'' $\gecc^\bullet(\Adot)$, and then describe in the case of $\gecc^\bullet(\phi_f[-1]\Fdot)$ why this really leads to an effective calculation.

First, projectivize and obtain $\Proj(\gecc^k(\Adot))=\sum_{S\in\strat}m^k_S(\Adot)[\Proj(\overline{T^*_S\U})]\subseteq \U\times\Proj^n$. Recall that our coordinates $\mathbf z=(z_0, \dots, z_n)$ determine our cotangent coordinates $(w_0, \dots, w_n)$ and, hence, determine projective coordinates $[w_0:\dots :w_n]$ on $\Proj^n$. We assume that $\U$ is small enough and that our coordinates $\mathbf z$ are generic enough so that, for all $S$ such that $\overline{T^*_S\U}$ is a component of $\ms(\Adot)$, for all $j$ such that $0\leq j\leq n$, the intersection of  $\Proj(\overline{T^*_S\U})$ and $\U\times\Proj^j\times\{\0\}$ in $\U\times\Proj^n$ is proper, and so is purely $j$-dimensional. We claim that the proper push-forwards 
$${}^k\Gamma^j_{{}_{\Adot, \mathbf z}}:=\eta_*\big(\Proj(\gecc^k(\Adot))\odot \U\times\Proj^j\times\{\0\}\big)
$$ 
completely determine $\Proj(\gecc^k(\Adot))$ (and, hence, $\gecc^k(\Adot)$). The ${}^k\Gamma^j_{{}_{\Adot, \mathbf z}}$ are the {\bf characteristic polar cycles}; we refer the reader to Section 5 of \cite{singenrich}.

\medskip

The characteristic polar cycles determine $\Proj(\gecc^k(\Adot))$ by downward induction on the dimension of strata of $X$. Let $d:=\dim X$, which we are assuming is at most $n$.

Consider first a stratum $S$ of dimension $d$. Then, $\overline{T^*_S\U}$ appears in $\gecc^k(\Adot)$ if and only if $\overline{S}$ is a component of $\big | {}^k\Gamma^d_{{}_{\Adot, \mathbf z}}\big |$. In addition, as 
$$\eta_*\Big(m^k_S(\Adot)[\overline{T^*_S\U}]\ \odot\  \U\times\Proj^d\times\{\0\}\Big)\ =\ m^k_S(\Adot)\Big[\eta_*\Big([\overline{T^*_S\U}]\ \odot\  \U\times\Proj^d\times\{\0\}\Big)\Big],
$$
once we know that $\overline{T^*_S\U}$ appears in $\gecc^k(\Adot)$ and we know  ${}^k\Gamma^d_{{}_{\Adot, \mathbf z}}$, then we can determine $m^k_S(\Adot)$. Note that in this process, we do not actually determine the stratum $S$, but rather a closed analytic set which agrees with $S$ on an open dense set -- but this is enough.

Now, suppose that we know the pieces of the enriched cycle $\Proj(\gecc^k(\Adot))$ for all of those strata of dimension at least $j+1$. Let us write $D^k_{\geq j+1}$ for the (enriched) sum of these pieces. Then, one can calculate the enriched cycle $\eta_*\big(D^k_{\geq j+1}\odot \U\times\Proj^j\times\{\0\}\big)$; this cycle is an enriched form of the $j$-dimensional absolute polar varieties of the strata of dimension at least $j+1$. Now, one can consider the difference (we use that $R$ is a PID here)
$$M^j:={}^k\Gamma^j_{{}_{\Adot, \mathbf z}}- \eta_*\big(D^k_{\geq j+1}\odot \U\times\Proj^j\times\{\0\}\big).
$$
Suppose that $S$ is a stratum of dimension $j$. Then, one easily sees that $\overline{T^*_S\U}$ appears in $\gecc^k(\Adot)$ if and only if $\overline{S}$ is a component of $\big |M^j\big |$. As above, once we know that $\overline{T^*_S\U}$ appears in $\gecc^k(\Adot)$ and we know $M^j$, we can determine $m^k_S(\Adot)$ by calculating $\Big[\eta_*\Big([\overline{T^*_S\U}]\ \odot\  \U\times\Proj^j\times\{\0\}\Big)\Big]$.

\bigskip

We have seen above that the characteristic polar cycles determine $\gecc^\bullet(\Adot)$. The question now is: how does one effectively calculate the characteristic polar cycles in the case where $\Adot=\phi_f[-1]\Fdot$?

\bigskip

Let us adopt the notation ${}^k\Lambda^j_{{}_{\Fdot, \mathbf z}} := {}^k\Gamma^j_{{}_{\phi_f[-1]\Fdot, \mathbf z}}$. From our discussion above, we see that we could reconstruct $\gecc^\bullet(\phi_f[-1]\Fdot)$ if we knew the ${}^k\Lambda^j_{{}_{\Fdot, \mathbf z}}$. The result of Corollary 6.8 of \cite{singenrich} gives an algorithm for calculating the ${}^k\Lambda^j_{{}_{\Fdot, \mathbf z}}$, assuming that the coordinates $\mathbf z$ are {\bf $\phi_f[-1]\Fdot$-isolating}. Let us put off the discussion of what $\phi_f[-1]\Fdot$-isolating means; for now, simply assume that the coordinates are generic enough to make true what we write below.

\smallskip

We work in each degree separately; so, fix $k$.

\smallskip

Let $\Pi^{n+1}:= \gecc^k(\Fdot)$. Then, $\Pi^{n+1}$ properly intersects $\displaystyle V\left(w_n-\frac{\partial\tilde f}{\partial z_n}\right)$, and we may consider the enriched cycle defined by the intersection $$\sum_V M_V[V] \ := \ \Pi^{n+1}\odot V\left(w_n-\frac{\partial\tilde f}{\partial z_n}\right);$$ this enriched cycle may have some components contained in $\imdf$ and some components not contained in $\imdf$. Let $\displaystyle\Pi^n:= \sum_{V\not\subseteq\imdf} M_V[V]$ and let $\displaystyle\Delta^n:=\sum_{V\subseteq\imdf}M_V[V]$.

\vskip .1in

Now, proceed inductively: if we have $\Pi^{j+1}$, then $\displaystyle V\left(w_j-\frac{\partial\tilde f}{\partial z_j}\right)$ properly intersects $\Pi^{j+1}$, and we define $\Pi^j$ and $\Delta^j$ by the equality
$$
\Pi^{j+1}\odot V\left(w_j-\frac{\partial\tilde f}{\partial z_j}\right)  \ = \ \Pi^j+\Delta^j,$$
where no component of $\Pi^j$ is contained in $\imdf$, and every component of $\Delta^j$ is contained in $\imdf$.

\vskip .1in

Continue with this process until one obtains $\Pi^0$ and $\Delta^0$.

\vskip .2in

Then, for all $j$, as germs at $\mathbf p$, ${}^k\Lambda^j_{{}_{f, \mathbf z}}(\Fdot) = \pi_*(\Delta^j)$ and  
$$\big[{}^k\Lambda^j_{{}_{f, \mathbf z}}(\Fdot)\odot V(z_0-p_0, \dots, z_{j-1}-p_{j-1})\big]_\mathbf p \cong
$$
$$
H^k(\phi_{z_j-p_j}[-1]\psi_{z_{j-1}-p_{j-1}}[-1]\dots\psi_{z_0-p_0}[-1]\phi_f[-1]\Fdot)_\mathbf p,
$$
where, when $j=0$, we mean
$$
\big[{}^k\Lambda^0_{{}_{f, \mathbf z}}(\Fdot)\big]_\mathbf p\cong H^k(\phi_{z_0-p_0}[-1]\phi_f[-1]\Fdot)_\mathbf p.
$$
Note that, as we are interested in the end only in the $\Delta^j$, throughout the algorithm above, we may, in each step, discard any components of $\Pi^j$ which do not intersect $\imdf$.

\bigskip

The above works very well for calculating the germs of ${}^k\Lambda^j_{{}_{f, \mathbf z}}(\Fdot)$ at $\mathbf p$, and so $\gecc^\bullet(\phi_f[-1]\Fdot)$ above a neighborhood of $\mathbf p$, as long as the coordinates $\mathbf z$ are {\it $\phi_f[-1]\Fdot$-isolating at $\mathbf p$}. In \cite{singenrich}, we give two characterizations of $\phi_f[-1]\Fdot$-isolating that are relevant here. 

Let $s:=\dim_\mathbf p\supp\phi_f[-1]\Fdot=\dim_\mathbf p\pi(\ms(\phi_f[-1]\Fdot))$. Then, the coordinates $\mathbf z$ are $\phi_f[-1]\Fdot$-isolating at $\mathbf p$ if and only if, for all $j$ such that $0\leq j\leq s-1$, $\mathbf p$ is an isolated point in the support of 
$$\phi_{z_j-p_j}[-1]\psi_{z_{j-1}-p_{j-1}}[-1]\dots\psi_{z_0-p_0}[-1]\phi_f[-1]\Fdot.$$ This is equivalent to:  

\smallskip

\noindent for all $j$ such that $0\leq j\leq s-1$, there exists an open neighborhood $\W$ of $\mathbf p$ in $\U$ such that 
$\Proj\big(\ms(\phi_f[-1]\Fdot)\big)$ properly intersects $\W\times\Proj^j\times\{\mathbf 0\}$ inside $\W\times\Proj^n$ and 
\begin{equation}\label{eq:intersectcond}\dim_\mathbf p \Big(V(z_0-p_0, \dots, z_{j-1}-p_{j-1})\cap \eta\Big(\Proj\big(\ms(\phi_f[-1]\Fdot)\big)\ \cap\ \W\times\Proj^j\times\{\mathbf 0\}\Big)\Big)\leq 0.
\end{equation}
When $j=0$, this condition is interpreted as 
$$
\dim_\mathbf p \eta\Big(\Proj\big(\ms(\phi_f[-1]\Fdot)\big)\ \cap\ \W\times\{[1:0:0:\cdots:0]\}\Big)\leq 0.
$$

\medskip

Are either one of these characterizations of $\phi_f[-1]\Fdot$-isolating useful? Yes - the latter one is. \thmref{thm:sskphi} tells us that 
$$\ms(\phi_f[-1]\Fdot)\subseteq \big(\ms(\Fdot)\big)_{\subseteq V(f)}\cup \ms(\psi_f[-1]\Fdot).$$
 So, if our coordinates are generic enough so that \equref{eq:intersectcond} above holds with $\Proj\big(\ms(\phi_f[-1]\Fdot)\big)$ replaced by $\Proj\big(\ms(\Fdot)\big)_{\subseteq V(f)}\cup \Proj\big(\ms(\psi_f[-1]\Fdot)\big)$, then the entire process above works.
\end{rem}

\bigskip

Now we can give an example.

\begin{exm}\label{exm:vangecc} We continue with our earlier situation: 
$$X=V(y)\cup V(y^2-x^3-t^2x^2)\subseteq\C^3, \ \ \Fdot=\Z^\bullet_X[2], \ \textnormal{ and } \ \tilde f:= x.$$ This will give us an easy, but nonetheless, illustrative example of the procedure described in \remref{rem:lecyclerem}.

From \exref{exm:hardergecc}, we know that $\ms(\Fdot)_{\subseteq V(f)} = T^*_{V(x,y)}\U\cup T^*_\0\U$. From \exref{exm:phigecc}, we know that $\ms(\psi_f[-1]\Fdot)$ is also equal to $T^*_{V(x,y)}\U\cup T^*_\0\U$. Therefore, \thmref{thm:sskphi} tells us that
$$
\ms(\phi_f[-1]\Fdot) \ \subseteq \ T^*_{V(x,y)}\U\cup T^*_\0\U,
$$
(in fact, \thmref{thm:sskphi} tells us that this an equality, though we will not use this stronger fact).

Hence,
$$
\supp \phi_f[-1]\Fdot \ = \ \pi\big(\ms(\phi_f[-1]\Fdot)\big) \ \subseteq \ \pi\big(T^*_{V(x,y)}\U\cup T^*_\0\U\big) \ = \ V(x,y).
$$
Considering how simple this set is, we could calculate $\gecc^\bullet(\phi_f[-1]\Fdot)$ ``barehandedly'', by applying \thmref{thm:vanphi} at the origin, and then moving to a generic point on $V(x,y)$, taking a hyperplane slice, and applying \thmref{thm:vanphi} again.

However, we want to demonstrate the procedure that we described in \exref{rem:lecyclerem}. Hence, we will first determine $\phi_f[-1]\Fdot$-isolating coordinates  at $\0$, and then go through the graded, enriched cycle calculation from \exref{rem:lecyclerem}.

\medskip

From the above, we see that $s=\dim_\mathbf 0\supp\phi_f[-1]\Fdot\leq 1$, and thus our coordinates are  $\phi_f[-1]\Fdot$-isolating  at $\0$ if \equref{eq:intersectcond} holds for $\mathbf p=\0$ and $j=0$; this is the degenerate case mentioned immediately after \equref{eq:intersectcond}.

It follows that, if we let $(z_0, z_1, z_2)=(t,x,y)$, so that the cotangent coordinates $(w_0, w_1, w_2)$ correspond to $w_0dt+w_1dx+w_2dy$, then $\Proj^0=\{[1:0:\cdots:0]\}$ in \equref{eq:intersectcond} corresponds to the projective class $[dt]$
and

\medskip

\noindent $\Proj\big(\ms(\phi_f[-1]\Fdot)\big)\ \cap\ \W\times\{[1:0:0:\cdots:0]\} \ \subseteq\hfill$

\smallskip

\noindent$\Big(\Proj\big(T^*_{V(x,y)}\U\big)\cup \Proj\big(T^*_\0\U\big)\Big)\ \cap\ \W\times\{[1:0:0:\cdots:0]\} \ = \emptyset \ \cup  \ \{(\0, [1:0:\cdots:0])\}.$

\bigskip

Therefore,
$$
\dim_\mathbf 0 \eta\Big(\Proj\big(\ms(\phi_f[-1]\Fdot)\big)\ \cap\ \W\times\{[1:0:0:\cdots:0]\}\Big)\leq 0,
$$
and the coordinates $(t,x,y)$ are $\phi_f[-1]\Fdot$-isolating  at $\0$. Note that this ordering on the coordinates is different from what we used earlier, because we need for $t$ to come first.

\bigskip

We can now proceed with the enriched cycle calculation as described in  \exref{rem:lecyclerem}.

\bigskip

As we saw in \exref{exm:hardergecc}, $\gecc^k(\Fdot)= 0$ if $k\neq 0$; thus, we need calculate only in the fixed degree $k=0$.  As we also saw in   \exref{exm:hardergecc},
$$
\gecc^0(\Fdot) = \Z\left[\overline{T^*_{S_1}\C^3}\right]+ \Z\left[\overline{T^*_{S_2}\C^3}\right] +\Z^2\left[\overline{T^*_{S_3}\C^3}\right]+ \Z\left[\overline{T^*_{S_4}\C^3}\right]+\Z^2 \left[T^*_{\{\0\}}\C^3\right],
$$
where 

$$S_1=V(y)- V(y^2-x^3-t^2x^2), \ S_2=V(y^2-x^3-t^2x^2)-V(y), \ S_3= V(x, y)-\{\0\},$$
and $S_4=V(x+t^2, y)-\{\0\}$.

 Using a computer algebra system to find equations defining $\overline{T^*_{S_2}\C^3}$, we have
 
 \medskip
 
$\gecc^0(\Fdot) = \Z\left[V(y, w_0, w_1)\right] +\hfill$

\medskip

$\Z \Big[V\big(y^2-x^3-t^2x^2,  \ 2tw_0w_1-w_0^2-2t^2w_2^2x,  \ -w_0w_1+2tw_1^2-tw_2^2(2t^2+3x), 
$

\medskip

$2w_1x+tw_0+3w_2y, \  w_0^2(t^2+x)-t^2w_2^2x^2, \  w_0y+tw_2x^2, \  2w_1y+w_2(2t^2+3x)x,$

\medskip

$tw_2y+w_0(t^2+x)\big)\Big] +\hfill$

\medskip

$\Z^2\left[V(x,y,w_0)\right]+ \Z\left[V(y, x+t^2, 2tw_1-w_0)\right]+\Z^2 \left[V(t,x,y)\right].\hfill$

\smallskip

Before we proceed with the algorithm, note that 
$$\imdf \ = \ \U\times\{(0,1,0)\} \ = \ V(w_0, w_1-1, w_2).$$

\smallskip

We now let $\Pi^3=\gecc^0(\Fdot)$, and calculate
$$
\Pi^3 \ \odot  \ V\left(w_2-\frac{\partial x}{\partial y}\right) \ = \ \Pi^3 \ \odot  \ V\left(w_2\right) \ =
$$
$$
\Z\left[V(y,  x+t^2,  2tw_1-w_0,  w_2)\right]  \ + \ \Z^2\left[V(x,y,w_0, w_2)\right]+ $$
$$
\Z\left[V(y, x+t^2, 2tw_1-w_0, w_2)\right]+\Z^2 \left[V(t,x,y, w_2)\right]
$$
$$
+ \ \textnormal{components which do not intersect} \ \imdf.
$$
Therefore, we have
$$
\Delta^2 \ = \  \Z^2\left[V(x,y,w_0, w_2)\right]  \ + \ \Z^2 \left[V(t,x,y, w_2)\right],
$$
and
$$
\Pi^2 \ = \ \Z^2\left[V(y,  t^2+x,  2tw_1-w_0,  w_2)\right].
$$

\end{exm}

\section{What about intersection cohomology?}\label{sec:ic}

The characteristic cycle of the intersection cohomology complex is of great importance in representation theory (see \cite{kazhdanlusztigtop} and \cite{bradenpams}), and yet, aside from the curve case in \exref{exm:curvegecc}, we have not discussed the calculation of the characteristic cycle (enriched or not) for intersection cohomology complexes (with constant or twisted coefficients). This is because such a calculation is, not surprisingly, hard, and we have no satisfactory results in this area. 

What may be surprising is that the calculation of the characteristic cycle of intersection cohomology, with constant coefficients, is closely related to the relative Milnor monodromy of the constant sheaf along a hypersurface containing the singular set. We will describe this relationship briefly.

\medskip

Suppose that $X$ is an analytic space and, as we are happy to work locally at $\0$, assume that we have an analytic function $f:X\to\C$ such that the singular set of $X$ is contained in $V(f)$, but that $f$ does not vanish on any irreducible component of $X$.

\smallskip

As before, let $i:X-V(f)\hookrightarrow X$  and $j:V(f)\hookrightarrow X$ denote the inclusions. 

\smallskip

Note that, as intersection cohomology $\Idot$, with constant coefficients, on $X$ is a perverse sheaf, the graded, enriched characteristic cycle is concentrated in degree $0$. Also, the case that is of concern in representation theory is when the base ring is a field. Consequently, we would be satisfied with calculating $\cc(\Idot)$.

Now, \propref{prop:easytri} tells us that we can calculate $\cc(\Idot)$ if we can calculate $\cc(j_*j^*[-1]\Idot)=-\cc(j_*j^*\Idot)$ and $\cc(i_!i^!\Idot)$. But $i^!\Idot\cong i^*\Idot$ is the restriction of $\Idot$ to a generic subset of the smooth part of $X$, by our assumptions on $f$. Thus, $i^!\Idot$ coincides with the restriction of the constant sheaf to a generic smooth subset of $X$. Hence, $\cc(i_!i^!\Idot)$ can be calculated using \thmref{thm:vanrestrext} and its corollary.

We are left with the problem of calculating $\cc(j_*j^*[-1]\Idot)$, which, after slicing, reduces to the problem of calculating the Morse module $m_\0^0(j_*j^*[-1]\Idot)$ or, more precisely, reduces to knowing when this Morse module is zero and when it is not. 

There is the fundamental distinguished triangle, relating the nearby and vanishing cycles:
$$
j_*j^*[-1]\Idot\rightarrow \psi_f[-1]\Idot \arrow{\operatorname{can}} \phi_f[-1]\Idot\arrow{[1]},
$$
which is actually a short exact sequence in the Abelian category of perverse sheaves, due to the fact that  $j_*j^*[-1]\Idot$ is perverse (which uses that $\Idot$ is intersection cohomology). 

There is also the dual variation triangle 
$$
\phi_f[-1]\Idot\arrow{\operatorname{var}}\psi_f[-1]\Idot\rightarrow j_!j^![1]\Idot\arrow{[1]},
$$
which is also a short exact sequence in the Abelian category of perverse sheaves, due to the fact that  $j_!j^![1]\Idot$ is perverse. 

This is where the monodromy automorphism $T_f:\psi_f[-1]\Idot\to \psi_f[-1]\Idot$ comes in. It is well-known that $\operatorname{var}\circ\operatorname{can}= \operatorname{id}-T_f$. It follows that, in the Abelian category of perverse sheaves on $V(f)$, $j_*j^*[-1]\Idot\cong\operatorname{ker}\{\operatorname{id}-T_f\}$.

Suppose now that $\tilde\call$ is a generic linear form, and that $\call$ is the restriction of $\tilde\call$ to $V(f)$. Then, it follows that $\phi_\call[-1]j_*j^*[-1]\Idot$, which is a finite-dimensional vector space concentrated in degree $0$, is isomorphic to the kernel of the map induced by $\operatorname{id}-T_f$ on $\phi_\call[-1]\psi_f[-1]\Idot$, and so is determined by  relative Milnor monodromy.

As $\Idot$ agrees with the constant sheaf on the complement of $V(f)$, which is all that $\psi_f[-1]\Idot$ cares about, we can calculate $m_\0^0(\psi[-1]\Idot)\cong\phi_\call[-1]\psi[-1]\Idot$ via \thmref{thm:vanpsi}, in the easy case of the constant sheaf, where the relevant strata are open dense subsets of the smooth parts of the components of $X$. Moreover, the relative monodromy that we need to analyze is also that from the ``easy'' constant sheaf case.

\medskip

It is, of course, our hope to analyze the above relative monodromy, and produce a method for calculating, in principle and in practice, characteristic cycles of intersection cohomology.

\section{Concluding Remarks}\label{sec:conclude}

It is somewhat annoying in \exref{exm:vangecc}, and in the general algorithm given in \remref{rem:lecyclerem}, that, essentially, we first have to know $\ms(\phi_f[-1]\Fdot)$ in order to begin the calculation of the cycles ${}^k\Lambda^j_{{}_{\Fdot, \mathbf z}}$. 

Why do we have to know $\ms(\phi_f[-1]\Fdot)$ first? Solely because we need to know that our coordinates are $\phi_f[-1]\Fdot$-isolating. Ideally, we could begin with the calculation of the ${}^k\Lambda^j_{{}_{\Fdot, \mathbf z}}$, and check ``on-the-fly'' that certain intersections are proper, which would then tell us that the coordinate choice is generic enough. This is what happens with the L\^e cycles for affine hypersurface singularities; see \cite{lecycles}.

Unfortunately, while we suspect that such a result is true, we have yet to find a proof.

\newpage
\bibliographystyle{plain}
\bibliography{Masseybib}
%\printindex
\end{document}